\documentclass[preprint]{elsarticle}
\usepackage[numbers]{natbib}
\usepackage[colorlinks,citecolor=blue,urlcolor=blue]{hyperref}
\usepackage{graphicx}
\usepackage{amsmath,amsfonts,amssymb,amsopn,amscd,amsthm}
\usepackage{bbm,wasysym,stmaryrd}
\usepackage{subfigure}

\graphicspath{{Figures}}

\newcommand{\R}{\mathbbm{R}}
\newcommand{\ta}{\theta_{\alpha}}

\renewcommand{\L}{\mathbbm{L}}

\newcommand{\N}{\mathbbm{N}}

\newcommand{\erf}{\textrm{erf}}

\DeclareMathOperator{\eqlaw}{\stackrel{\mathcal{L}}{=}} 
\DeclareMathOperator{\eqdef}{\stackrel{\mathcal{\text{def}}}{=}}

\newcommand{\derpart}[2]{ \frac{\partial #1}{\partial #2} } 
\newcommand{\der}[2]{ \frac{\text{d} #1}{\text{d} #2} }  
\newcommand{\Exp}{\mathbbm{E}} 
\newcommand{\Ex}[1]{\mathbbm{E}\left [#1 \right]} 

\newcommand{\M}{\mathcal{M}}
\newcommand{\C}{\mathcal{C}}

\theoremstyle{plain}
\newtheorem{theorem}{Theorem}

\newtheorem{proposition}[theorem]{Proposition}
\newtheorem{lemma}[theorem]{Lemma}

\theoremstyle{definition}

\newenvironment{remark}{\par {\noindent \it \sc Remark.} \small \it } {}

\journal{Physica D}

\setlength{\textwidth}{14cm} \setlength{\oddsidemargin}{1cm}
\setlength{\evensidemargin}{1cm} \setlength{\textheight}{20cm}
\setlength{\voffset}{-0.5cm}

\begin{document}
	\begin{frontmatter}
		\title{Mean-Field equations for stochastic firing-rate neural fields with delays: derivation and  noise-induced transitions}

		\author{Jonathan Touboul}
		\ead{jonathan.touboul@college-de-france.fr}
		\address{The Mathematical Neuroscience Lab, College de France, Center for interdisciplinary research in Biology (CIRB), CNRS UMR 7241, INSERM U1050, UPMC ED 158, MEMOLIFE PSL* \& \\
		BANG Laboratory, INRIA Paris\\
		11 place Marcelin Berthelot, 75005 Paris}

\begin{abstract}
In this manuscript we analyze the collective behavior of mean-field limits of large-scale, spatially extended stochastic neuronal networks with delays. Rigorously, the asymptotic regime of such systems is characterized by a very intricate stochastic delayed integro-differential McKean-Vlasov equation that remain impenetrable, leaving the stochastic collective dynamics of such networks poorly understood. In order to study these macroscopic dynamics, we analyze networks of firing-rate neurons, i.e. with linear intrinsic dynamics and sigmoidal interactions. In that case, we prove that the solution of the mean-field equation is Gaussian, hence characterized by its two first moments, and that these two quantities satisfy a set of coupled delayed integro-differential equations. These equations are similar to usual neural field equations, and incorporate noise levels as a parameter, allowing analysis of noise-induced transitions. We identify through bifurcation analysis several qualitative transitions due to noise in the mean-field limit. In particular, stabilization of spatially homogeneous solutions, synchronized oscillations, bumps, chaotic dynamics, wave or bump splitting are exhibited and arise from static or dynamic Turing-Hopf bifurcations. These surprising phenomena allow further exploring the role of noise in the nervous system.
\end{abstract}

\begin{keyword} 
noise, neural fields, collective dynamics,  bifurcations, Turing instabilities.
\end{keyword}

\end{frontmatter}

\pagestyle{myheadings}
\thispagestyle{plain}
\markboth{J. TOUBOUL}{Stochastic Neural Fields Dynamics}

{\bf Update:} This paper was updated to take into account measurability issues that may arise when considering that individual synapses have independent fluctuations. Actually, the mean-field dynamics that are analyzed in the manuscript correspond to a slightly distinct microscopic model than the one initially indicated, and that we updated here; the results on the dynamics and bifurcations are unchanged. We thank F. Delarue for noting this point.

\section{Introduction}
The activity of the brain is often characterized by large-scale macroscopic states resulting of the structured interaction of a very large number of neurons. This interaction yield meaningful signals accessible from non-invasive imaging techniques (EEG/MEG/Optical Imaging) and often used for diagnosis. Finer analysis of the brain's constitution identifies anatomical structures, such as the cortical columns, composed of the order of few thousands to one hundred thousand neurons belonging to a few different species, in charge of specific functions, sharing the same input and strongly interconnected.  Neurons composing these columns manifest highly complex behaviors often characterized by the intense presence of noise. They communicate through the emission of action potentials delivered after a specific delay due to the transport of information through axons at a finite speed and to the synaptic transmission. These delays have a clear role in shaping the neuronal activity, as established by different authors (see e.g.~\cite{roxin-brunel-etal:05,coombes-laing:11,roxin-montbrio:11,series-fregnac:02}). Two paradigmatic examples of this organization are the primary visual cortex of certain mammals and the rat's barrel cortex. In the primary visual cortex V1, neurons can be divided into orientation preference columns responding to specific orientations in visual stimuli, forming specific patchy connections \cite{hubel-wiesel-etal:78,bosking-zhang-etal:97}. Similarly, the rat's barrel cortex presents a clear columnar organization, and neurons responding to the sensory information of a particular whisker anatomically gather into barrels~\cite{woosley:69,kandel-schwartz-etal:00}. Several relevant brain states relying on the coordinated behaviors of large neural assemblies recently raised the interest of physiologists and computational neuroscientists, among which we shall cite the rapid complex answers to specific stimuli~\cite{thorpe-delorme-etal:01}, decorrelated activity~\cite{ecker-berens-etal:10,renart-de-la-rocha-etal:10}, large scale oscillations~\cite{buszaki:06}, synchronization~\cite{tabareau-slotine:10,izhikevich-polychronization:06}, and spatio-temporal pattern formation~\cite{ermentrout-cowan:80,coombes-owen:05,spreng-grady:10}. 

\emph{Neural fields} are intermediate-scale (mesoscopic) descriptions of neural networks activity. At this level of description, neurons gather into spatially localized homogeneous structures, the neural populations, containing sufficiently many neurons so that averaging effects occur, and that have small enough spatial extension to resolve quite fine topological or functional structure of the brain and its activity. Neural fields dynamics are almost exclusively studied through the use of heuristic continuum limits ever since the seminal work of Wilson, Cowan and Amari \cite{amari:72,amari:77,wilson-cowan:72,wilson-cowan:73}. In this model, the activity is represented through a macroscopic variable, the population-averaged firing rate, that is generally assumed to be deterministic. This approach successfully accounted for a number of phenomena, for instance for the problem of spatio-temporal pattern formation in spatially extended models, in relationship with visual hallucinations for instance (see e.g.~\cite{coombes-owen:05,ermentrout:98,ermentrout-cowan:79,laing-troy-etal:02}). However, these approaches neglect the presence of noise at neural field scale, implicitly making the assumption that the prominent noisy structure of individual neurons activity vanishes in the limit of large networks. Deterministic models describing neural field's activity are relevant approximations of the large networks activity provided that noise does not induce qualitative transitions. However, increasingly many researchers tend to consider that the different intrinsic or extrinsic noise sources produce a meaningful signal and conveys important information~\cite{rolls-deco:10}, and it is hence of high interest to analyze the effects of noise on the dynamics of neural fields, taking into the anatomical structure and noisy nature of neurons' activity.

Relating Wilson and Cowan type of models to the dynamics of stochastic neuronal networks is therefore a deep question in the field of computational neuroscience, and has been recently the subject of intense research. One of the main difficulties is to find relevant descriptions of the collective dynamics using suitable models of neuronal activity, and in particular including noise at the cellular level. Sparsely connected neural networks for integrate-and-fire neurons were analyzed in that view, and evidence regimes where the activity is uncorrelated~\cite{abbott-van-vreeswijk:93,amit-brunel:97,brunel-hakim:99}. In this case, the emergent global activity of the population in the limit of an infinite number of neurons in that case is deterministic, and evolves according to a mean-field firing rate equation. A distinct approach proposed to analyze a discrete-time Markovian model governing the firing dynamics of the neurons in the network, where the transition probability satisfies a master equation~\cite{buice-cowan:07,elboustani-destexhe:09}, and were developed for spatially extended networks in~\cite{bressloff:09}, a notable exception in this domain. In the limit where the number of the neurons is infinite, the behavior of the system reduces to standard Wilson and Cowan equations, and finite-size effects produce a stochastic perturbation that qualitatively modifies the solution of the global system~\cite{touboul-ermentrout:10}. Most of these approaches hold in some parameter regions, are based on statistical physics tools such as path integrals and Van-Kampen expansions, and the analysis of their dynamics involve a moment expansion and truncation. In these different limits, noise cancels out through averaging effects in the macroscopic descriptions. 

In this manuscript, following~\cite{touboulNeuralfields:11}, we consider the limit of interacting neural networks in the presence of external noise and stochastic synapses. In that view, neural fields are a particular limit of a set of interacting nonlinear stochastic processes with space-dependent interactions and propagation delays. The approach is evocative of statistical fluid mechanics and more generally interacting particle systems, a widely studied problem in mathematical physics chiefly motivated by thermodynamics or fluid dynamics questions. 
The network equations, analogous in the thermodynamic domain to the Newton equations on the free movement of particles in a gas, is shown in the neural field limit (the thermodynamic limit) to satisfy the propagation of chaos\footnote{Here the term \emph{chaos} is understood here in the statistical physics sense as the Boltzmann's molecular chaos ("Sto\ss zahlansatz"), corresponding to the independence between the velocities of two different particles before they collide. This is very different from the notion of chaos in deterministic dynamical systems, and in particular what we term chaos in the spatio-temporal patterns found in section~\ref{sec:TwoLayer}.} property ensuring that the state of neurons are independent provided that the initial conditions were also independent. The probability distribution of the state of a typical neuron is solution of an intricate implicit equation on the space of stochastic processes, an integro-differential McKean-Vlasov equation. This equation can be expressed as a nonlocal partial differential equation on probability distributions, and would correspond to the Boltzmann equation in the thermodynamics analogy (or in different settings, Vlasov-Poisson, Landau, \dots). The study of such equations is generally very complex to perform and prevents from understanding the dynamics of such neural fields. In the statistical fluid mechanics domain, a particularly successful method has been to derive macroscopic descriptions of observable quantities at a local thermodynamic equilibrium (where the speeds have a Gaussian distribution) such as the local density, macroscopic local velocity and local temperature fields. These are, in the thermodynamics case, given by Euler or Navier-Stokes equations. Nonlinear phenomena related to the dynamics of such fluids, as for instance turbulence and the formation of vortexes, is then evidenced in these equations.

Unfortunately, in the case of neural fields, there is \emph{a priori} no local Gaussian equilibria, and characterizing the solutions of the system is still an open problem. The central idea of this manuscript consists in instantiating a simplified yet relevant model of neuron, the firing-rate model. In that case, we demonstrate that the system has Gaussian attractive solutions (local equilibria). In the present case, we show that the characteristics of these Gaussian local equilibria, namely the two first moments, satisfy a closed system of deterministic delayed integro-differential equations. This exact reduction allows bringing the stochastic mean-field problem in the setting of usual studies of neural fields. This new set of equations, derived from a mathematically rigorous analysis of the limits of stochastic interacting neurons, has the nice property to be compatible with the usual Wilson and Cowan equations in the zero noise limit. Qualitative effects of noise on the dynamics of neural fields activity will hence be reached from the thus derived set of infinite-dimensional equations. 

The plan of the paper is as follows. In section~\ref{sec:MathSetting} we introduce the neural network model under investigation, and to this end recall a few  previous results on mean-field limits of spatially extended neural networks with propagation delays. We present the limit equation, as the number of neurons goes to infinity, of general neural networks in the neural-field limit, and discuss the existence and uniqueness of spatially homogeneous solutions (in law). It is also in this section that we use these results in the case of networks composed of firing-rate neurons. We show that the solution of this equation has Gaussian solutions and characterize the evolution of this distribution through deterministic equations on its mean and covariance. Well-posedness of these equations is analyzed, and most of the proofs of the new theoretical results are provided in the appendices A, B and C. Noise-induced transitions are analyzed in section~\ref{sec:Analysis}: we first deal with a one-population network in section~\ref{sec:OneLayer} and analyze the complex bifurcation structure observed when delays and noise are simultaneously varied. Section~\ref{sec:TwoLayer} focuses on a more realistic two-layers network account for an excitatory/inhibitory structure. We show in particular the destabilization of spatially homogeneous solutions, transitions towards perfectly synchronized oscillations through dynamic Turing-Hopf bifurcations involving chaotic spatio-temporal structures. These results are discussed from a biological viewpoint in the conclusion. 

\section{Mean-field equations for firing-rate neural networks}\label{sec:MathSetting}
In this section, we set up the context of the study, review some general results and apply these to networks of firing-rate neurons in order to derive our system of nonlinear delayed integro-differential equations the analysis of which will be the core of the next section.

\subsection{Mathematical Background}\label{sec:MathBack}
We consider networks composed of $N$ neurons falling into $P(N)$ homogeneous populations corresponding to specific locations on the neural field $\Gamma$\footnote{$\Gamma$ can either represent the physical space or a functional space (or both).}. The state of each neuron is described by a $d$-dimensional variable, belonging to a space denoted $E$, that generally describes the cell's voltage and different related ionic concentrations. Populations are distributed on $\Gamma$ according to a measure $\lambda$, a sum of Dirac measures in the finite population case (the case where $P(N)$ remains finite in the limit $N\to\infty$), or a distribution with regular density with respect to Lebesgue's measure in the continuous case. The $P(N)$ populations have their locations $r_{\alpha}$ independently drawn in the law $\lambda$, and we denote by $N_{\alpha}(N)$ (or simply $N_{\alpha}$) the number of neurons in population $\alpha$ in the network of size $N$. The interconnection between a neuron $i$ of population $\alpha$ located at $r_{\alpha}\in \Gamma$ and a neuron of population $\beta$ at location $r_{\beta}$ is characterized by a synaptic coefficient $J(r_{\alpha},r_{\beta})$ and a propagation delay $\tau(r_{\alpha}, r_{\beta})$ depending on the distance between the populations and the propagation speed (see Fig.~\ref{fig:Neurons}). 

\begin{figure}[htbp]
	\centering
		\includegraphics[width=.5\textwidth]{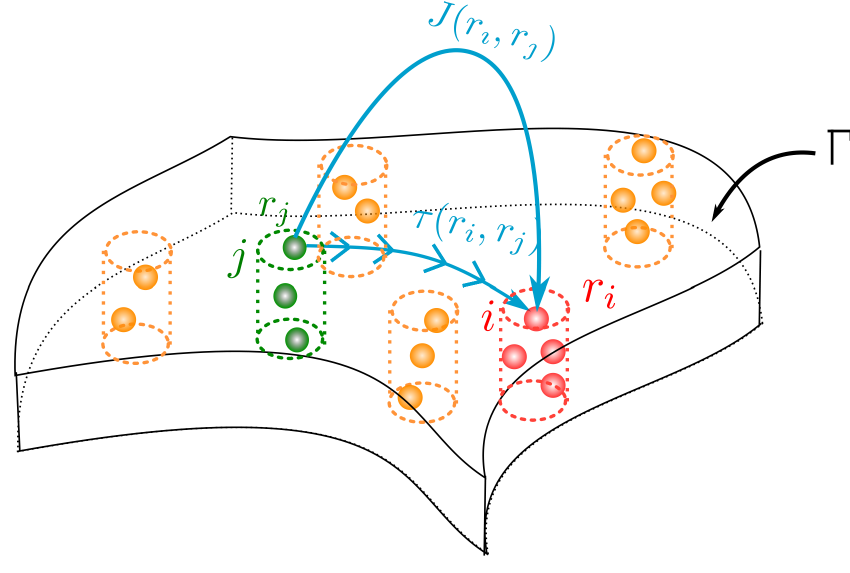}
	\caption{A typical architecture of neural field: cylinders represent neural populations as cortical columns spanning across the cortex. Neuron $j$ in the green population at location $r_{j}\in \Gamma$ communicates with neuron $i$ in the red population at location $r_i$, and the information sent is received with a delay $\tau(r_i,r_j)$ depending on the spatial location of each neuron, and with a synaptic weight $J(r_i,r_j)$ also depending on the neural populations of $i$ and $j$. }
	\label{fig:Neurons}
\end{figure}

We work in a complete probability space $(\Omega, \mathcal{F},\mathbbm{P})$ satisfying the usual conditions endowed with a filtration $\big(\mathcal{F}_t\big)_t$. Neuron $i$ in population $\alpha$ satisfies the equations:

\begin{multline}\label{eq:NetworkSpace}
	\displaystyle{d\,X^{i,N}_t=G(r_{\alpha},t,X^{i,N}_t) \, dt + \frac{{\lambda(\Gamma)}}{P(N)}\sum_{\gamma=1}^{P(N)} \sum_{j=1}^{N_{\gamma}} \frac{1}{N_{\gamma}} \int_{-\tau}^0 b(r_{\alpha},r_{\gamma},X^{i,N}_t,X^{j,N}_{t+s})d\eta(r_{\alpha},r_{\gamma},s) \,dt} \\
	\displaystyle{+ g(r_{\alpha},t,X^{i,N}_t) dW^{i}_t
	+ \frac{{\lambda(\Gamma)}}{P(N)}\sum_{\gamma=1}^{P(N)} \sum_{j=1}^{N_{\gamma}} \frac{1}{N_{\gamma}} \int_{-\tau}^0 \beta(r_{\alpha},r_{\gamma},X^{i,N}_t,X^{j,N}_{t+s})d\mu(r_{\alpha},r_{\gamma},s) dB^{i}_t.}
\end{multline}
In this equation, the free dynamics of each neuron is governed by a drift function $G:\Gamma\times\R\times E \mapsto E$ accounting for the intrinsic dynamics and deterministic inputs of neuron $i$. Stochastic effects are characterized through the diffusion matrix $g:\Gamma\times\R\times E \mapsto \R^{d\times m}$ and the  $m$-dimensional independent adapted Brownian motions $(W^{i}_t)_{i\in\N}$. The transmission delays are taken into account through the signed finite measures $\eta$ and $\mu$. Interactions between neurons are of two types: deterministic interactions are governed by the function $b$ and stochastic variations of the synaptic weights through the function $\beta$ and the $d \times d$ independent adapted Brownian motions\footnote{{\bf Update: } the Brownian motions are not a function of the population location $r_{\gamma}$.} $(B^i_t)_{i\in\N}$. Due to the presence of delays, each component of the initial conditions is a continuous function belonging to $\C=C([-\tau,0],E)$.

There is a critical competition between the number of populations and the total number of neurons: in order for averaging effects to occur in the neural field, a very large number of neurons in each population is required, competing with the total number of populations. The \emph{neural field regime} described in~\cite{touboulNeuralfields:11} assumes that:
\begin{equation}\label{eq:PopulationEstimate}
	\mathbbm{e}(N)\eqdef\frac 1 {P(N)} \sum_{\gamma=1}^{P(N)} \frac 1 {N_{\gamma}(N)} \mathop{\longrightarrow}\limits_{N\to\infty} 0
\end{equation}
In the finite population case, it corresponds to the assumption that the number of neurons in every population tends to infinity. For continuous neural fields, this assumption is slightly less stringent and allows a few populations to have a finite number of neurons (these populations will be of measure $0$). This regime is relevant for modeling neural fields, since at this scale neuronal populations contain a number of neurons orders of magnitude larger than the number of populations (see e.g.~\cite{changeux:06}).

Under mild regularity conditions on the functions governing the network equation, it was proved in~\cite{touboulNeuralfields:11} that the system enjoys the propagation of chaos property. This means that if the initial conditions of the neurons were independent and identically distributed at the level of each population (such initial conditions are termed \emph{chaotic}), neurons remain independent during the evolution. In details, for $l\in \N^*$\footnote{In the manuscript, we classically denote by $\N$ the set of natural integers, $\mathbbm{Z}$ the set of integers and $\R$ the set of real numbers. A star exponent indicates that $0$ is not taken into account, and a $+$ indicates that only positive quantities are considered. } neurons indexed by $\{i_1,\cdots,i_l\}$, the law of $(X^{i_1,N}_t, \cdots, X^{i_l,N}_t, -\tau\leq t \leq T)$ are independent processes when $N\to\infty$. In that limit, $X^{i}_t$ has the law of $\bar{X}_t(r)$, the unique solution of the mean-field equations:
	\begin{multline}\label{eq:MFESpaceSummary}
		\displaystyle{d\,\bar{X}_t(r)=G(r,t,\bar{X}_t(r)) \, dt + \int_{\Gamma} \int_{-\tau}^0 \Exp_{{Z}}[b(r,r',\bar{X}_t(r),{Z}_{t+s}(r'))]d\eta(r,r',s) \, \lambda(r') dr' \,dt} \\
		\displaystyle{+ g(r,t,\bar{X}_t(r)) dW_t
		+ \int_{\Gamma} \int_{-\tau}^0 \Exp_Z[\beta(r,r',\bar{X}_t(r),{Z}_{t+s}(r'))]d\mu(r,r',s) \lambda(r')\,dr' dB_t.}
	\end{multline}
	In this equation, $(Z_t(r))$ denotes a process independent of $\bar{X}_t(r)$ with the same law, and $\Exp_{Z}$ denotes the expectation with respect to the process $Z$. The processes $(W_t)_{{t\geq 0}}$ and $(B_t)_{t\geq 0}$ are independent Brownian motions\footnote{In order to emphasize the propagation of chaos property, we initially defined the equations with spatially chaotic Brownian motions, which does not modify the law of the solution, as we discussed in J. Touboul, Journal of Statistical Physics 56 (3), p. 546-573 (2014)}. Rigorously, the law of $\bar{X}_t(r)$ is given by the solution of:
	\begin{multline*}
		\displaystyle{d\,\bar{X}_t(r)=G(r,t,\bar{X}_t(r)) \, dt + \mathcal{E}_{r'} \left [\int_{-\tau}^0 \Exp_{{Z}}[b(r,r',\bar{X}_t(r),{Z}_{t+s}(r'))]d\eta(r,r',s) \,dt\right]} \\
		\displaystyle{+ g(r,t,\bar{X}_t(r)) dW_t
		+ \mathcal{E}_{r'} \left[ \int_{-\tau}^0 \Exp_Z[\beta(r,r',\bar{X}_t(r),{Z}_{t+s}(r'))]d\mu(r,r',s)\right]  dB_t,}
	\end{multline*}
	where $\mathcal{E}_{r'}$ is the expectation with respect to the distribution of the population locations over $\Gamma$ with distribution $\lambda(\cdot)/\lambda(\Gamma)$. The initial conditions are given by a process $\zeta_t(r)$ belonging to the space of mappings of $\Gamma$ with values in $\C$.
In particular, considering a case with a finite number $P$ of populations located at positions $(r_{\alpha},\alpha\in\{1,\ldots,P\}) \in \Gamma^P$, the mean-field equations read:
	\begin{multline*}
		\displaystyle{d\,\bar{X}_t(r_{\alpha})=G(r_{\alpha},t,\bar{X}_t(r_{\alpha})) \, dt + \sum_{\gamma=1}^P \int_{-\tau}^0 \Exp_{{Z}}[b(r_{\alpha},r_{\gamma},\bar{X}_t(r_{\alpha}), {Z}_{t+s}(r_{\gamma}))] d\eta(r_{\alpha},r_{\gamma},s) \,dt} \\
			\displaystyle{+ g(r_{\alpha},t,\bar{X}_t(r_{\alpha})) dW_t
			+ \sum_{\gamma=1}^P \int_{-\tau}^0 \Exp_{{Z}}[\beta(r_{\alpha},r_{\gamma},\bar{X}_t(r_{\alpha}),{Z}_{t+s}(r_{\gamma}) )]d\mu(r_{\alpha},r_{\gamma},s)  dB_t .}
	\end{multline*}	
	
These equations can appear very formal due to the generality of the models considered and to the mathematical approach developed of~\cite{touboulNeuralfields:11}. In particular, even if the mean-field approach reduces an infinite system of interacting diffusion processes into a single well-posed equation, a major issue one faces is the concrete identification, characterization and simulation of the solution of the mean-field equations (see e.g.~\cite{BFFT,talay-vaillant:03}). In our spatial and delayed setting, this concern is even more true. \\
In order to qualitatively characterize the dynamics of these equations, we start by investigating spatially homogeneous solutions in law, before instantiating a particular neuronal dynamics.

\subsection{Spatially Homogeneous solutions and synchronization in law}
Spatially homogeneous solutions, sometimes called synchronized solutions, correspond to neural fields regimes in which neurons manifest the same behavior after a transient phase. A similar phenomenon, called polychronization, corresponds to the fact that the neural field form a few homogeneous clusters (see e.g.~\cite{izhikevich-polychronization:06}). Our stochastic setting suggests to extend these notion to synchronization or polychronization \emph{in law}: i.e. probability distributions, after a transient phase, synchronize (or polychronize). The propagation of chaos property implies that individual neurons belonging to the same population are synchronized in law since their probability distribution are solution of the same equation with the same initial condition. A more complex question is the synchronization or polychronization in law across populations. The following proposition provides a simple sufficient condition for the existence and uniqueness of spatially homogeneous solutions in law.

\begin{proposition}\label{pro:Synchro}
	Assume that the distribution of the initial condition $(\zeta^0_t(r), -\tau\leq t \leq 0)$ is chaotic and independent of $r$, and that the functions $G(r,t,x)$ and $g(r,t,x)$ do not depend on $r$. Moreover, if the law of the quantities:
	\begin{equation}\label{eq:TotalSynchro}
		\begin{cases}
			B(r,x,\varphi)\eqdef\int_{\Gamma}\int_{-\tau}^0  b(r,r',x,\varphi(u)) d\eta(r,r',u)\lambda(r')dr'\quad \text{and}\\ 		
			H(r,\psi,\varphi)\eqdef\int_{\Gamma} \int_0^t \int_{-\tau}^0 \beta(r,r',\psi(s),\varphi(s+u)) d\mu(r,r',u) dB_{s} \lambda(r')dr'
		\end{cases}
	\end{equation}
	do not depend on $r$ for any $(\psi,\varphi)$ measurable functions, then the solution of the mean-field equation~\eqref{eq:MFESpaceSummary} is spatially homogeneous in law.	The law of the spatially homogeneous is solution of the implicit equation:
	\begin{multline}\label{eq:SynchronizedLaw}
		\displaystyle{X_t (r_0)=\zeta_0(r_0) + \int_0^t ds \Big( G(r_0,s,X_s(r_0)) + \Exp_{Z} [B(r_0,X_s(r_0),Z_{(\cdot)}(r_0))]\Big)}\\
	 \displaystyle{+\Exp_{Z} [ H(r_0,X_{(\cdot)}(r_0), Z_{(\cdot)}(r_0) )] + \int_0^t dW_s g(r_0,s,X_s(r_0)),}
	\end{multline}
	where $Z\eqlaw X$ and is independent of $X$. This equation has a unique solution. 
\end{proposition}

The proof of this proposition is performed in appendix~\ref{append:ExistUniqueSynchro}. The demonstration of the existence of stationary solutions uses the characterization of the solutions as the limit of Picard iterates. This technique, usually used in proofs of existence and uniqueness of solutions, shows that starting from an arbitrary process, the iterates of a particular function $\Phi$ converge towards the solution of the equation. The idea is to use the degree of freedom we have on the process initializing the recursion. We show that the set of spatially homogeneous processes is invariant under the iteration of $\Phi$. Choosing a spatially homogeneous initial process produces a sequence of spatially homogeneous processes, hence the limit of this sequence (which is the unique solution of the mean-field equations) is spatially homogeneous.

	Let us focus more specifically on the assumptions of the proposition. First of all, spatially homogeneous drift and diffusion functions are obvious necessities in order for the neurons to have identical responses. The condition~\eqref{eq:TotalSynchro} ensures that the global input a neuron receives is independent of its spatial location. These are hence relatively strong assumptions. However, this sufficient condition is sometimes also necessary. This is for instance the case of the system analyzed in section~\ref{sec:Analysis}. Indeed, considering $b(r,r',x,y)=J(r,r')S(y)$ and $\beta(r,r',x,y)=\sigma(r,r')S(y)$, then the existence of a spatially homogeneous solution in law implies the fact that:
	\[r\mapsto \int_{\Gamma} \int_{-\tau}^0 J(r,r') \Exp[{S(\bar{X}_t(r'))}]d\eta(r,r',s) \, \lambda(r') dr'\]
	is constant (and similarly for the synaptic case). For $\bar{X}$ a spatially homogeneous solution, the term $\Exp[{S(\bar{X}_t(r'))}]$ does not depend on $r'$. If this term is not zero (which is generally the case since we consider positive sigmoids), this implies that necessarily:
	\[r\mapsto \int_{\Gamma} \int_{-\tau}^0 J(r,r') d\eta(r,r',s) \, \lambda(r') dr'\]
	is constant, hence ~\eqref{eq:TotalSynchro} is satisfied. 
	
	In other particular cases, it can occur that spatially homogeneous solutions exist even if the condition of the proposition is not satisfied, for instance in the latter case when $\Exp[{S(\bar{X}_t(r')}]=0$.

Analogous ideas allow extending such conditions for polychronization in law. It would amount ensuring that there exists a partition of $\Gamma$ into different clusters such that each cluster receiving the same input from the others.

\subsection{Mean-field equations for firing-rate models}\label{sec:FiringRateModels}
We now apply the theory exposed to revisit from a probabilistic viewpoint Wilson and Cowan firing-rate approach~\cite{wilson-cowan:72,wilson-cowan:73,venkov-coombes-etal:07}. 
In that model, the state of each neuron is described by a scalar quantity representing the voltage of each neuron, assumed to have a linear intrinsic dynamics $G(r,t,x)=-x/\theta(r) +I(r,t)$ where $\theta(r)$ is the characteristic time of the membrane potential and $I(r,t)$ the deterministic external input received by the neurons. They receive noisy input driven by a Brownian motion, with a diffusion coefficient $g(r,t)=\Lambda(r,t)$, and interact through their mean firing rate assumed to be a sigmoidal transform of the voltage variable. The delayed interactions are written as a sum over all neurons of a synaptic coefficient only depending on the populations the interacting neurons belong to and a sigmoidal transform of the pre-synaptic neuron (the one sending a current), $b(r,r',x,y)=J(r,r')S(r',y)$. The functions $S(r,x)$ are assumed uniformly bounded and uniformly Lipschitz-continuous with respect to $x$. We also consider that the interconnection weights are noisy, and specify the function $\beta(r,r',x,y)=\sigma(r,r') S(r',y)$. This model is a relatively simple case of the general study reviewed in section~\ref{sec:MathBack} and clearly satisfies the regularity conditions of~\cite{touboulNeuralfields:11}. In order to simplify further our analysis, we will assume that the delay measures $\eta_{\alpha\gamma}(u)$ and $\mu_{\alpha,\gamma}(u)$ (respectively $\eta(r,r',u)$ and $\mu(r,r',u)$) are Dirac measures at fixed times $\tau(r,r')$, which generally can be considered to be $\Vert r-r'\Vert_{\Gamma}/c + d$ where $c$ would the transport velocity in the axons and $d$ the typical time of the synapse. 

The equation of the dynamics of neuron $i$ of population located at $r_{\alpha} \in \Gamma$ in the network with $N$ neurons and $P(N)$ populations reads:
\begin{multline}\label{eq:NetworkFiringRateSpace}
\displaystyle{dV^{i,N}(t) = \Bigg( -\frac 1 {\theta(r_{\alpha})}  V^{i,N}(t) + I(r_{\alpha}, t) + \sum_{\gamma=1}^{P(N)} J({r_{\alpha},r_{\gamma}}) \; \frac{1}{N_{\gamma}} \sum_{j=1}^{N_{\gamma}} S(r_{\gamma},V^{j,N}(t-\tau(r_{\alpha},r_{\gamma}))) \Bigg) \, dt}\\
\displaystyle{+ \Lambda(r_{\alpha},t) dW^{i}_t + \sum_{\gamma=1}^{P(N)} \sigma(r_{\alpha}, r_{\gamma}) \Bigg(\frac{1}{N_\gamma} \sum_{j=1}^{N_{\gamma}} S(r_{\gamma},V^{j,N}(t-\tau(r_{\alpha},r_{\gamma})))\Bigg) \,dB^{i}_{t}.}
\end{multline}
In section~\ref{sec:TwoLayer} we will consider neural fields composed of different layers (for instance excitatory and inhibitory neurons). The results presented in this theoretical analysis readily extend to such multiple layers neural fields. 

The results summarized in section~\ref{sec:MathBack} readily apply to the present case. In details, for $T>0$ a fixed time horizon, assuming that the initial conditions of the network equations are chaotic, then activity $V^{i,N}$ of neuron $i$ in population $\alpha$ converges in law towards the process $\bar{V}(r_{\alpha})$ where $(\bar{V}_t(r))$ is the unique solution of the mean-field equation:
\begin{multline}\label{eq:MFEFiringRateSpace}
	\displaystyle{d\bar{V}_t(r)=\left(-\frac 1 {\theta(r)}  \bar{V}_t(r)+I(r,t)+\int_{\Gamma}  J({r,r'}) \Exp[S(r',\bar{V}_{t-\tau(r,r')}(r'))]\lambda(r')dr'\right)dt}\\
	\displaystyle{+\Lambda(r,t) dW_t+ \int_{\Gamma} \sigma({r,r'}) \Exp[S(r',\bar{V}_{t-\tau(r,r')}(r'))] \lambda(r')dr'dB_t.}
\end{multline}
In this equation, the processes $(W_t)$ and $(B_t)$ are independent Brownian motions, and the expectation term is the probabilistic expectation under the distribution of $\bar{V}$. Moreover, the propagation of chaos property applies, i.e. in the limit $N\to\infty$, finite sets of neurons are independent.

This result characterizes the behavior of the system up to a finite time $T$. When considering the problem of stability and bifurcations of stationary or periodic solutions, this limitation is problematic. Indeed, the property does not ensure that the asymptotic regimes of the mean-field equations accurately correspond to solutions of the network equation, which would be ensured by a uniform propagation of chaos property (see~\cite{mischler-mouhot:11}). This is the object of the following theorem.

\begin{theorem}[Uniform propagation of chaos]\label{thm:UniformPropaChaos}
	If the Lipschitz constant of the sigmoid $S$ is small enough, then the convergence of the network equations towards the mean-field equations is uniform in time, i.e. there exists a constant $C>0$ such that for all $T>0$, for all $i\in\N$ a neuron at location $r\in\Gamma$,
	\[\sup_{0\leq t\leq T} \Exp[ \vert V^i_t - \bar{V}^i_t \vert ] \leq \frac {C}{\sqrt{N}}\]
	for $\bar{V}^i_t$ the a particular process with law $\bar{V}_t(r)$ termed the coupled process. In this inequality, $C$ only depends on the parameters of the system and is independent of $T$.
\end{theorem}

A more precise quantitative statement on the parameters and the proof of this Theorem are provided in~\ref{append:UniformPropagation}. The proof is based on a thorough control of the convergence, making a fundamental use of the linearity of the equation.

Now that the convergence towards the mean-field equations~\eqref{eq:MFEFiringRateSpace} has been quantified, we characterize the solutions of these equations.

\begin{theorem}\label{pro:GaussianSpace}
	If the initial condition ${V}^0(r)$ is a Gaussian chaotic process, the solution of the mean-field equations \eqref{eq:MFEFiringRateSpace} with initial conditions ${V}^0(r)$ is Gaussian for all time. Let us denote by $\mu(r,t)$ its mean and by $v(r,t)$ its variance. The term $\Ex{S(r,{V}_t(r)})$ is a function of $r$, $\mu(r,t)$ and $v(r,t)$ only, denoted $f(r,\mu,v)$. We have:
	\begin{equation}\label{eq:DDEIntegroDiff}
			\begin{cases}
				\displaystyle{\derpart{\mu}{t}(r,t)=-\frac 1 {\theta(r)} \mu(r,t) + \int_{\Gamma}  J(r,r') f\big( r,\mu(r',t-\tau(r,r')),v(r',t-\tau(r,r'))\big)\lambda(r')dr'+I(r,t)}\\
				\\
				\displaystyle{\derpart{v}{t}(r,t)=-\frac 2 {\theta(r)} \,v(r,t) + \int_{\Gamma}  \sigma(r,r')^2 f\big (r,\mu(r',t-\tau(r,r')),v(r',t-\tau(r,r'))\big)^2\lambda(r')^2 dr' +\Lambda^2(r,t)}
			\end{cases}
		\end{equation}
		with initial condition $\mu(r,t) =\Ex{{V}^0_t(r)}$ and $v(r,t) =\Exp{[({V}^0_t(r)-\mu(r,t))^2]}$ for $t\in [-\tau, 0]$ and $r\in\Gamma$. 
\end{theorem}

\begin{proof}
Using the variation of constant formula, it is easy to show that the unique solution of the mean-field equations \eqref{eq:MFEFiringRateSpace} with initial condition $V^0$ satisfies the implicit equation:
		\begin{multline}\label{eq:SoluMFESpace}
		\displaystyle{V_t(r)=e^{-\frac{t}{\theta(r)}} V^0_0(r)+e^{-\frac{t}{\theta(r)}}\Bigg(\int_0^t e^{\frac{s}{\theta(r)}} \Big(I(r,s)+\int_{\Gamma} J({r,r'}) \Exp[S(r',{V}_{t-\tau(r,r')}(r'))]\lambda(r')dr'\Big)  
		ds}\\
		\displaystyle{+\int_0^t e^{\frac{s}{\theta(r)}} \Lambda(r,s) dW_s+ \int_{\Gamma} \sigma({r,r'}) \int_0^t e^{\frac{s}{\theta(r)}} \Exp[S(r',{V}_{s-\tau(r_,r')}(r'))] \lambda(r')dr'dB_s\Bigg).}
		\end{multline}
It is clear from this formulation that the righthand side is a Gaussian process as the sum of a deterministic function and stochastic integrals of deterministic functions with respect to Brownian motions, and hence $V_t(r)$ also is\footnote{This property can also be proved by using the classical characterization of the solution of the mean-field equation as the limit of the iteration of the map $\Phi$ as proved in~\cite{touboulNeuralfields:11} and used in~\ref{append:ExistUniqueSynchro}. Picard's iterations are initialized with a Gaussian process, and it is very simple to show that the space of Gaussian processes is invariant under $\Phi$. ${V}$ will hence be defined as the limit of a sequence of Gaussian processes, hence Gaussian itself.}.

For $X$ a Gaussian process with mean $\mu$ and variance $v$, the term $\Ex{S(r,X)}$ is a function of $f(r,\mu,v)=\int_{\R} S(r,x\sqrt{v}+\mu)Dx$
with $Dx=e^{-x^2/2}/\sqrt{2\pi}$ the standard Gaussian distribution. Taking the expectation and the variance of the process given by the implicit equation~\ref{eq:SoluMFESpace}, we obtain the following equations:
\begin{multline*}
	\displaystyle{\mu(r,t)=e^{-\frac{t}{\theta(r)}}\bigg(\mu(r,0)+\int_0^t e^{\frac{s}{\theta(r)}} \Big ( I(r,s) }\\
			 \displaystyle{+\int_{\Gamma} J({r,r'}) f(r',\mu(r',s-\tau(r,r')),v(r',s-\tau(r,r')))\lambda(r')dr' \Big) 
			ds\bigg)}
\end{multline*}
and 
\begin{multline*}
			\displaystyle{v(r,t)=	e^{-\frac{2t}{\theta(r)}}\bigg(v(r,0)+\int_0^t e^{\frac{2s}{\theta(r)}}\Big( \Lambda(r,s)^2} \\ 
	 \displaystyle{+\int_{\Gamma}  \sigma^2({r,r'}) f^2(r',\mu(r',s-\tau(r,r')),v(r',s-\tau(r,r')))\lambda^2(r')dr'\Big) 
		ds\bigg),}
\end{multline*}
which are equivalent to system~\eqref{eq:DDEIntegroDiff}. 
\end{proof}

\begin{remark}
	\begin{itemize}
		\item Formula~\eqref{eq:SoluMFESpace} shows that the initial condition exponentially vanishes. Thanks to the uniform propagation of chaos (Theorem~\ref{thm:UniformPropaChaos}), any choice of chaotic initial condition will approach the thus described Gaussian solution.
		\item The mean and variance characterize the law of $V$ since the covariance $C(r,r',t_1,t_2)$ of ${V}_{t_1}(r)$ and ${V}_{t_2}(r')$ is a simple function of $\mu(r,t)$ and $v(r,t)$. Indeed, for $r\neq r'$, the covariance is null because of the independence of the initial conditions and of the Brownian motions involved at two different space locations. For $r=r'$:
				\begin{multline*}
					\displaystyle{C(r,r,t_1,t_2) = e^{-(\frac{t_1+t_2}{\theta(r)})} v(r,0)
					+ \int_0^{t_1\wedge t_2} e^{\frac{2 s}{\theta(r)}} \Lambda(r,s)^2\,ds}\\
					\displaystyle{+ \int_0^{t_1\wedge t_2} e^{\frac{2 s}{\theta(r)}}\int_{\Gamma}\lambda(r')^2\,dr' \sigma(r,r')^2 f^2(r',\mu(r',s-\tau(r,r')),v(r',s-\tau(r,r'))\,ds }
				\end{multline*}
				\item If $S(r,x) = \erf(g(r) x+h(r))$, the function $f(r,\mu,v)$ takes the simple form (see~\cite{touboul-hermann-faugeras:11}):
				\begin{equation*}
					f(r,\mu,v) = \erf\left(\frac{g(r)\, \mu + h(r)}{\sqrt{1+g^{2}(r)v}}\right).
				\end{equation*}
				This simple expression motivates the choice of $\erf$ sigmoids in section~\ref{sec:Analysis}.
				\item In a finite-population case, or for spatially homogeneous solutions, the equations are the following delayed differential equations:
				\begin{equation}\label{eq:DDE}
				\begin{cases}
					\displaystyle{\dot{\mu}_{\alpha}(t)=-\frac 1 {\ta} \mu_{\alpha}(t) + \sum_{\beta=1}^P J_{\alpha\beta}f_{\beta}\big(\mu_{\beta}(t-\tau_{\alpha\beta}),v_{\beta}(t-\tau_{\alpha\beta})\big)+I_{\alpha}(t)} & \alpha=1\ldots P\\
					\displaystyle{\dot{v}_{\alpha}(t)=-\frac 2 {\ta} \,v_{\alpha}(t) + \sum_{\beta=1}^P \sigma_{\alpha\beta}^2 f_{\beta}^2\big(\mu_{\beta}(t-\tau_{\alpha\beta}),v_{\beta}(t-\tau_{\alpha\beta})\big) +\lambda^2_{\alpha}(t)}  & \alpha=1\ldots P
				\end{cases}
				\end{equation}
				(we used indexes to label the populations instead of their spatial location).
	\end{itemize}
\end{remark}

\begin{theorem}[Well posedness of the moment equations]\label{thm:ExistenceUniquenessMoments}
	Under non-degeneracy conditions on the process, there exists a unique solution to the moment equations~\eqref{eq:DDEIntegroDiff} and \eqref{eq:DDE}.
\end{theorem}
The precise statement of this theorem as well as the rigorous proof are developed in appendix~\ref{append:ExistenceUniqueness}. 

One of the main interest of the theorem~\ref{pro:GaussianSpace} is to rigorously describe the stochastic dynamics of the complex stochastic mean-field equations through two coupled integro-differential equations. A very interesting feature of the system~\eqref{eq:DDEIntegroDiff} is that it is compatible with usual Wilson and Cowan equations in the zero noise limit. The system precisely quantifies the fact that when in the presence of noise, mean and covariance interact in a nonlinear fashion. In these equations, noise appears as a {parameter} of a deterministic dynamical system, which allows characterizing the dynamics of the stochastic equations using the well developed bifurcation theory in Hilbert spaces.

\section{Noise-induced transitions}\label{sec:Analysis}

In this section we analyze the dynamics of the mean-field described by~\eqref{eq:DDEIntegroDiff}. We particularly focus on the effects of noise on the solutions. We first analyze a one-layer network, in a case where analytical study is possible, before addressing the more relevant case of two layers networks involving an excitatory and an inhibitory populations.

\subsection{Noise-induced stabilization of spatially homogeneous regimes in a one-layer system}\label{sec:OneLayer}
In this section, we consider the case of a single-layer neural field, whose mean and standard deviation satisfy equation~\eqref{eq:DDEIntegroDiff}, and investigate the stability of spatially homogeneous regimes as a function of noise levels. We consider a one-dimensional neural field distributed homogeneously on $\Gamma=\mathbbm{S}^1$ (i.e. $\Gamma=[0,1]$ with periodic boundary conditions and $\lambda(dx)=dx$). This choice is motivated by functional neural fields modeling orientation preference. In order to simplify the analysis, we assume that the connectivity functions $J(r,r')$ and $\sigma(r,r')$ only depend on the distance $\vert r-r'\vert$, that $\theta(r)$ is constant and that $S(r,x)$ only depends on $x$. We further assume that $\Lambda(r,t)$ is a constant equal to simply noted $\Lambda$. Since the neural field is periodic and the connectivity functions are convolutional, spatially homogeneous solutions exist by direct application of Proposition~\ref{pro:Synchro}. The spatially homogeneous state satisfies the equations:
\[
\begin{cases}
	\der{\mu}{t}=-\frac{\mu}{\theta} + \int_0^1 J(r) f(\mu(t-\tau(r)),v(t-\tau(r)))\,dr + I\\
	\der{v}{t}=-\frac{2\,v}{\theta} + \int_0^1 \sigma^2(r) f^2(\mu(t-\tau(r)),v(t-\tau(r)))\,dr + \Lambda^2\\
\end{cases}
\]

Let us denote $\mathcal{J}$ (resp. $\tilde{\sigma}^2$) the integral $\int_0^1 J(r,r')dr'$ (resp. $\int_0^1 \sigma^2(r,r')dr'$), assumed finite (these obviously do not depend on $r$).  Taking $S(r,x)=\erf(gx)$, we have seen that $f(r,x,y)$ is equal to $\erf(gx/\sqrt{1+g^2y})$. Hence $F_0\eqdef f(r,0,v)$ does not depend on $v$. Let us further set $I=-\mathcal{J} F_0$. In that case, $\mu(r,t)\equiv 0$ for any $(r,t)\in\Gamma\times\R^+$ is a solution of the mean equation whatever the standard deviation $v$.
Possible spatially homogeneous equilibria $(\bar{\mu},\bar{v})$ are solution of the equations:
\[
\begin{cases}
	-\frac{\bar{\mu}}{{\theta}} + \mathcal{J} f(\bar{\mu},\bar{v}) + I &=0\\
	-\frac{2\,\bar{v}}{\theta} + \tilde{\sigma}^2 f^2(\bar{\mu},\bar{v}) + \Lambda^2&=0\\
\end{cases}
\]
and a trivial solution is given by $(\mu_0,v_0)\eqdef \left(0, \frac{\theta}{2} (\tilde{\sigma}^2\,F_0^2 +\Lambda^2)\right)$.

Let us start by considering the non-delayed case. It is easy to demonstrate that if the slope $g$ is small enough (precisely $g<\sqrt{2\pi/(\theta^2\mathcal{J}^2-2\pi v_0)}$), $(\mu_0,v_0)$ is stable and is the unique fixed point of the system. For larger values of $g$, the system presents two distinct regimes as a function of the noise parameters: for large values of the parameters $\sigma$ and $\Lambda$, $(\mu_0,v_0)$ is the unique spatially homogeneous equilibrium and it is stable, and for smaller values of the noise parameters, $(\mu_0,v_0)$ is unstable and there are two additional equilibria denoted $(\mu_1,v_1)$ and $(\mu_2,v_2)$. Moreover, the system undergoes a pitchfork bifurcation along the ellipse:
\[\tilde{\sigma}^2\,F_0^2 +\Lambda^2 = \left(\frac{\mathcal{J}^2\theta^2g^2}{2\pi}-1\right)\frac {2}{\theta g^2},\] 
and on this line the two fixed points $(\mu_1,v_1)$ and $(\mu_2,v_2)$ collapse on $(\mu_0,v_0)$ and disappear. 

This simple analytical study provides a first example of the influence of noise levels on the nature and stability of spatially homogeneous equilibria of the mean-field equations. Let us now consider the full spatially extended system with delays and address the problem of pattern formation beyond a Turing instability around the spatially homogeneous steady state $(\mu_0,v_0)$. To this purpose, we analyze the linear stability of this fixed point (see e.g.~\cite{bressloff:96,hutt-bestehorn:03,venkov-coombes-etal:07}), which amounts characterizing the eigenvalues of the linearized equations around this fixed point. Since $f(x,y)=\erf(gx/\sqrt{1+g^2y})$, we clearly have:
\[\begin{cases}
	\derpart{f}{x}\vert_{(\mu_0,v_0)}=\frac{g}{\sqrt{1+g^2v_0}} \frac{1}{\sqrt{2\pi}}\eqdef F'_0\\
	\derpart{f}{y}\vert_{(\mu_0,v_0)}=\frac{-g^3\mu_0}{\sqrt{\pi(1+g^2v_0)^3}} e^{-\frac{\mu_0^2g^2}{1+g^2v_0}} =0.
\end{cases}\]
The linearized equations around $(\mu_0,v_0)$ hence read:
\[
\begin{cases}
	\derpart{A}{t}(r,t) &= -\frac {1} {\theta} A(r,t) + F'_0\,\int_{\Gamma} J(r'-r)A(r',t-\tau(r'-r))\,dr'\\
	\derpart{B}{t}(r,t) &= -\frac {2} {\theta} B(r,t) + 2F_0\,F'_0\,\int_{\Gamma} \sigma^2(r'-r)A(r',t-\tau(r'-r))\,dr'
\end{cases}
\]
 
Since the integral operators are convolutions on $\mathbbm{S}^1$, they are diagonalizable on the Fourier basis. Let us consider perturbations of the equilibrium of the form $A_{\nu,k}(r,t)=\Re(e^{\nu\,t+2\pi\,k\,r})$ with $\nu=\mathbf{i}\omega + l$, and leave $B(r,t)$ unspecified. We denote by $(a_k(\nu))$ and $(b_k(\nu))$ the Fourier coefficients of the functions $J(r)e^{-\nu\tau(r)}$ and $\sigma(r)e^{-\nu\tau(r)}$:
\[
\begin{cases}
	a_k(\nu) &= \int_{\Gamma} J(r')e^{-\nu\,\tau(r')}e^{-2\mathbf{i}\pi k\,r'}\,dr'\\
	b_k(\nu) &= \int_{\Gamma} \sigma^2(r')e^{-\nu\,\tau(r')}e^{-2\mathbf{i}\pi k\,r'}\,dr'
\end{cases}.
\]
The functions $A_{\nu,k}$ are eigenfunctions for the first equation the linearized system provided that $\nu$ satisfies the relationship:
\[\nu=\nu_k\eqdef -\frac 1 {\theta} +F_0' a_k,\]
defining the \emph{dispersion relationship} of the system. The characteristic roots of the linearized equations are given by the eigenvalues of the matrix:
\[\left(\begin{array}{ll}-\frac 1 \theta +F_0' a_k & 0\\ 2F_0F_0' b_k & -2/\theta \end{array}\right)\]
which are exactly $\{-2/\theta, -1/\theta + F_0' a_k, k\in \N\}$. In particular, this shows that no instability can occur on the standard deviation equation, and the whole stability of the homogeneous fixed point only depends on the Fourier coefficients $J(r)e^{-\tau(r)}$ and $F_0'$. Explicitly, in the original parameters, the spectrum is hence composed of the eigenvalues $\{-\frac{2}{\theta}, -\frac{1}{\theta} + a_k \; \frac{g}{\sqrt{2\pi(1+g^2v_0)}} , k\in \N\}$ with $v_0=\frac{\theta}{2} (\tilde{\sigma}F_0^2 +\Lambda^2)$. Let $a_M$ be the Fourier coefficient with largest real part\footnote{Since $J$ is integrable, this maximum necessarily exists because of Parcheval-Plancherel theorem.}. The solution $(0,v_0)$ is hence linearly stable as soon as 
\begin{equation}\label{eq:BifurcOneLayer}
	\Re\left(-\frac{1}{\theta}+a_M \frac{g}{\sqrt{2\pi (1+g^2v_0)}} \right)<0.
\end{equation}

An instability occurs when the characteristic roots such that $\nu$ has a positive real part. A Turing bifurcation point is defined by the fact that there exists an integer $k$ such that $\Re(\nu_k)=0$. It is said to be \emph{static} if at this point $\Im(\nu_k)=0$, and \emph{dynamic} if $\Im(\nu_k)=\omega_k\neq 0$. In that latter case, the instability is called Turing-Hopf bifurcation, and generates a global pattern with wavenumber $k$ moving coherently at speed $\omega_k/k$ as a periodic wavetrain. If the maximum of $\lambda_k$ is reached for $k=0$, a spatially homogeneous state is excited.

Formula~\eqref{eq:BifurcOneLayer} precisely quantifies the stabilization effect of the noise. Indeed, if $-1/\theta + \Re(a_M) g/\sqrt{2\pi}<0$, then all the eigenvalues are negative whatever $v_0$ and hence the solution $(0,v_0)$ is stable whatever the noise connectivity matrix $\sigma$ and the additive noise $\Lambda$. If now $-1/\theta + \Re(a_M) g/\sqrt{2\pi}>0$, then for $\Lambda$ and $\sigma$ small, the fixed point $(0,v_0)$ is unstable. When $\sigma$ or $\Lambda$ are increase, the fixed point will gain stability, since the maximal eigenvalue tends to $-1/\theta$ when $v_0$ goes to infinity.

In order to further identify the presence of Turing-Hopf instabilities, we choose an exponential connectivity function $J(r)=e^{-\vert r \vert/s}$ for some $s>0$, and $\tau(r) = \frac{\vert r \vert}{c} + \tau_d$. In that case, we have:
\[\frac{e^{-\nu\tau_d}(1-e^{-(\frac 1 s + \frac {\nu} c)})}{\frac 1 s + \frac {\nu} c+\mathbf{i}2\pi k }  \]
The related characteristic equation (or dispersion relationship ) reads:
\[\nu+\frac 1 {\theta} = F_0' \frac{e^{-\nu\tau_d}(1-e^{-(\frac 1 s + \frac {\nu} c)})}{\frac 1 s + \frac {\nu} c+\mathbf{i}2\pi k } \]
These equations are relatively complicated to solve analytically in that general form\footnote{A recent technique allows computing numerically the Hopf bifurcations lines and involves relatively complex formulation (see~\cite{veltz:11})}. However, when considering purely synaptic delay case (corresponding formally to $c=\infty$, i.e. disregarding the transport phenomenon), we can compute in closed form the curves of Turing-Hopf instabilities. In details, Turing instabilities arise when there exists $(\nu,k)$ satisfying the dispersion relationship and such that $\nu$ is purely imaginary: $\nu=\mathbf{i}\omega_k$. Equating modulus and argument in the dispersion relationship, we obtain the Turing instability curves. These only exist for parameters such that:
$\frac{F_0'^2 (1-e^{-1/s})^2}{1/s^2 + 4\pi^2k^2} \geq \frac 1 {\theta^2}$, i.e. when the noise levels are small enough so that:
\[\tilde{\sigma}^2\,F_0^2 +\Lambda^2 \leq \frac{\theta^2}{2\pi} \frac{(1-e^{-1/s})^2}{1/s^2+4\pi^2k^2}-\frac 1 {g^2}\eqdef \Lambda^*\]and in that case, we have:
\[\omega_k= \sqrt{\frac{F_0'^2 (1-e^{-\frac 1 s})^2}{\frac 1 {s^2} + 4\pi^2k^2} - \frac 1 {\theta^2}}.\]
An instability hence arises for parameters such that:
\[
	\tau_d=\frac {1}{\omega_k} \Big(-\arctan(\theta \omega_k) -\arctan(2\pi k s) +2\pi m\Big).
\]
for some $m\in\mathbbm{Z}$. 

In Figure~\ref{fig:DelaysBifs} we plotted the Turing instability curves for the mode $k=0$ and around the spatially homogeneous equilibrium $(\mu_0,v_0)$. We observe that Turing instability only occur when delays are non-null, and for noise parameters corresponding to the regime where the system presents $3$ spatially homogeneous equilibria. These bifurcations hence occur around the unstable spatially homogeneous solution, hence do not affect the stability of the fixed point (the system has a positive real eigenvalue). However, these instabilities produce transients regimes characterized by oscillations at the frequency $w_0$ close to the unstable spatially homogeneous solution. Numerical simulations of the neural field equations show that these oscillations progressively vanish and the system converges either towards spatially homogeneous or not depending on the initial condition. Moreover, since these instabilities arise on the mode $k=0$, oscillations in the neural field are synchronized. These instabilities are displayed in Fig.~\ref{fig:OscillationDelay} and ~\ref{fig:OscillationDelayTwo}. A movie of the solution (Supplementary Material), shows a surprising behavior of the solution that tends to stabilize around the unstable fixed point for relatively large periods of time. 

In order to exhibit non-spatially homogeneous solutions corresponding to wavenumbers strictly greater than $1$, we choose specific initial conditions as follows. We have seen that for small values of the noise parameters and large values of the slope $g$, the fully synchronized system presented two different stable equilibria that we denoted $(\mu_1,v_1)$ and $(\mu_2,v_2)$. If all initial conditions belong to the attraction basin of the same spatially homogeneous equilibrium, then the only mode to be excited corresponds to $k=0$, and the neural field stabilizes on a constant mode. If the initial condition belong to the attraction basin of the two different stable fixed points, higher modes are excited. As examples, setting the initial condition to:
\begin{equation}\label{eq:IC1}
	\mu(r,t)=\begin{cases}
		1 & r\in [0,0.25],\;t\in [-\tau,0]\\
		-1 & r\in [0.75,1],\;t\in [-\tau,0]\\
		0 & \text{otherwise}
	\end{cases}
\end{equation}
we excite a non-constant mode $k=1$ as illustrated in Fig.~\ref{fig:NonConstantMode}, and setting the initial condition to:
\begin{equation}\label{eq:IC2}
	\mu(r,t)=\begin{cases}
		1 & r\in [0,0.05] \cup [0.5,0.55],\;\;t\in [-\tau,0]\\
		-1 & r\in [0.25,0.3]\cup [0.75,0.8],\;t\in [-\tau,0]\\
		0 & \text{otherwise}
\end{cases}
\end{equation}
we excite the mode $k=2$, as illustrated in Fig.~\ref{fig:FourClusters}. These spatially periodic solutions persist when considering delays, and transient oscillations superimpose to this dynamics (Fig. ~\ref{fig:OscillationDelayTwo}). As noise is increased, this mode looses stability in favor of the mode $k=1$ first, before this mode looses again stability in favor of the spatially homogeneous solution $(\mu_0,v_0)$ as expected from the analysis of the stability of that fixed point. Higher modes prove relatively unstable, and illustrated in Fig.~\ref{fig:8clust} for an initial condition corresponding to a wavenumber $k=4$. After a short transient, the system stabilizes on a stationary solution corresponding to the mode $k=1$.

\begin{figure}
	\centering
		\subfigure[Bifurcations of the synchronized system]{\includegraphics[width=.3\textwidth]{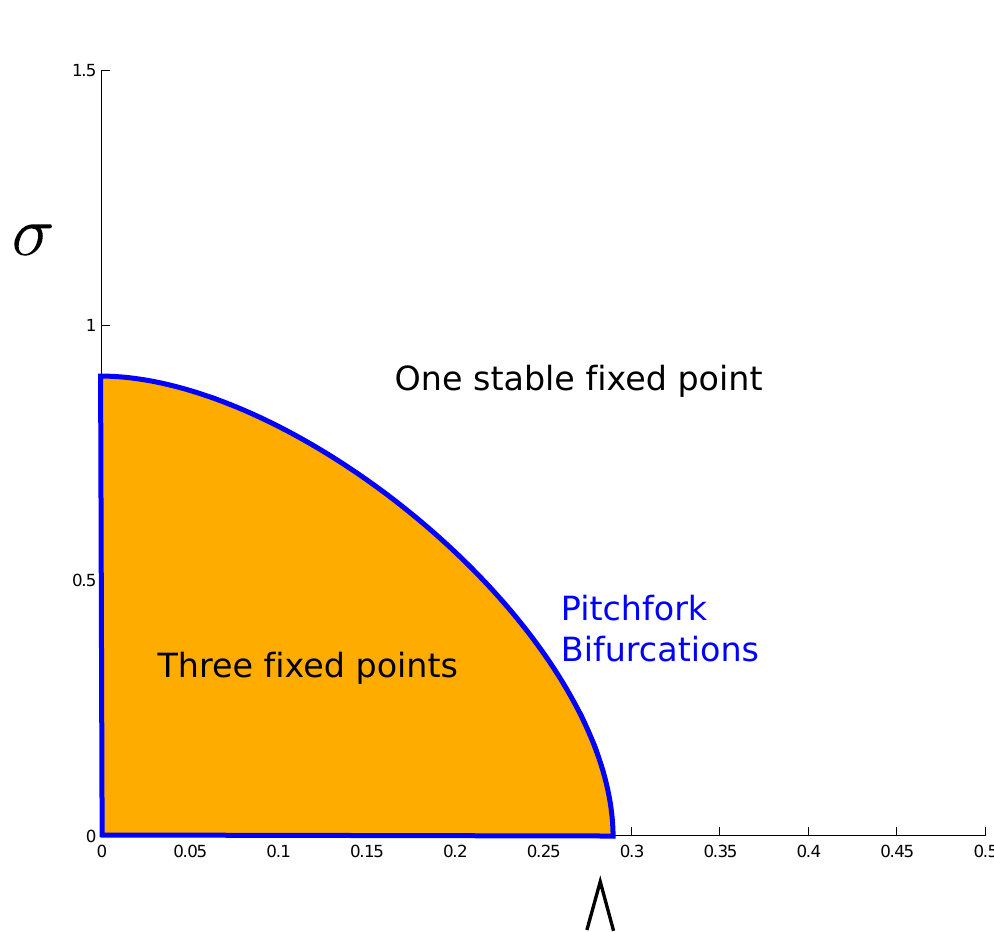}}\quad
		\subfigure[Bifurcations diagram : noise and delay]{\includegraphics[width=.3\textwidth]{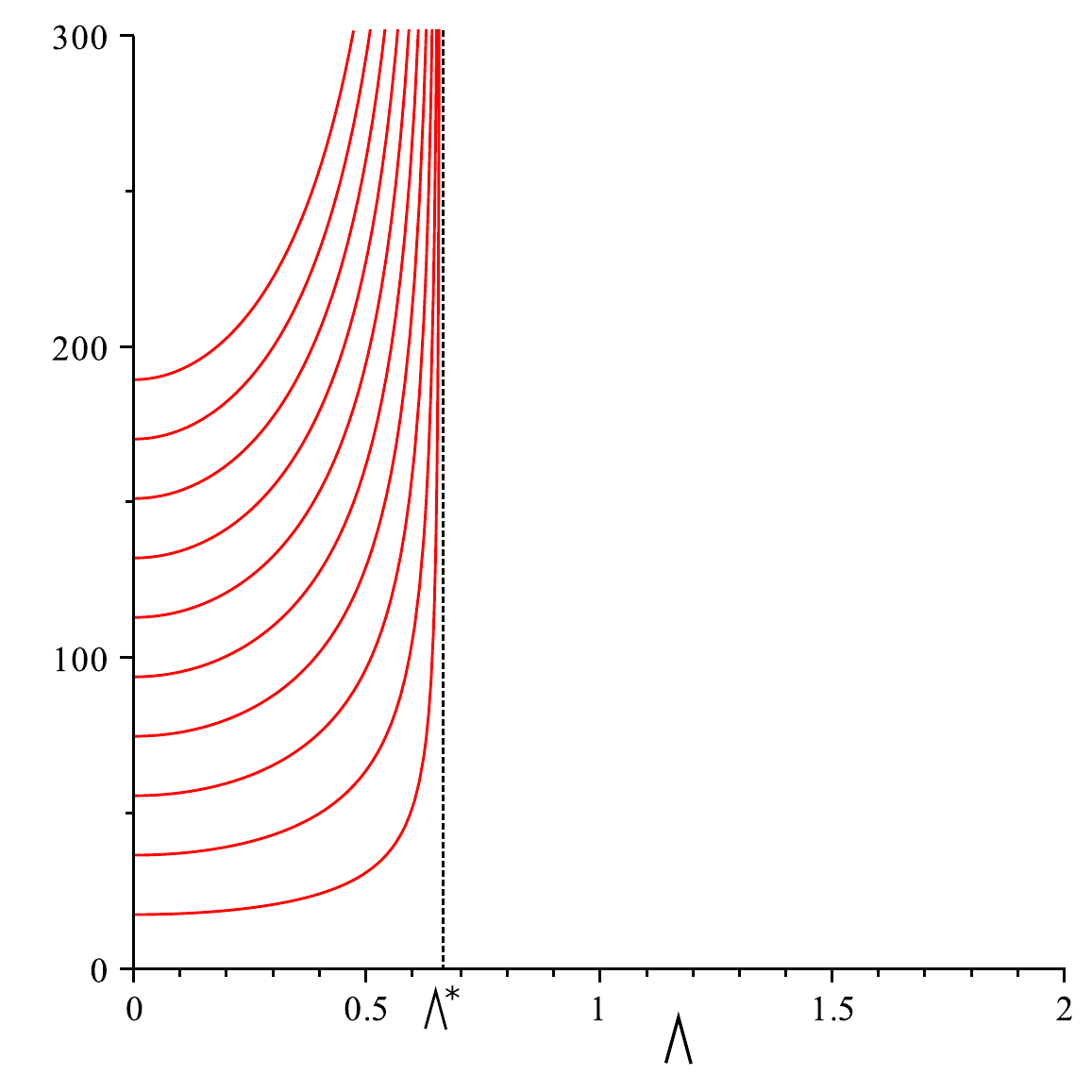}\label{fig:DelaysBifs}}\quad
		\subfigure[$\tau=20$, homogeneous initial conditions]{\includegraphics[width=.3\textwidth]{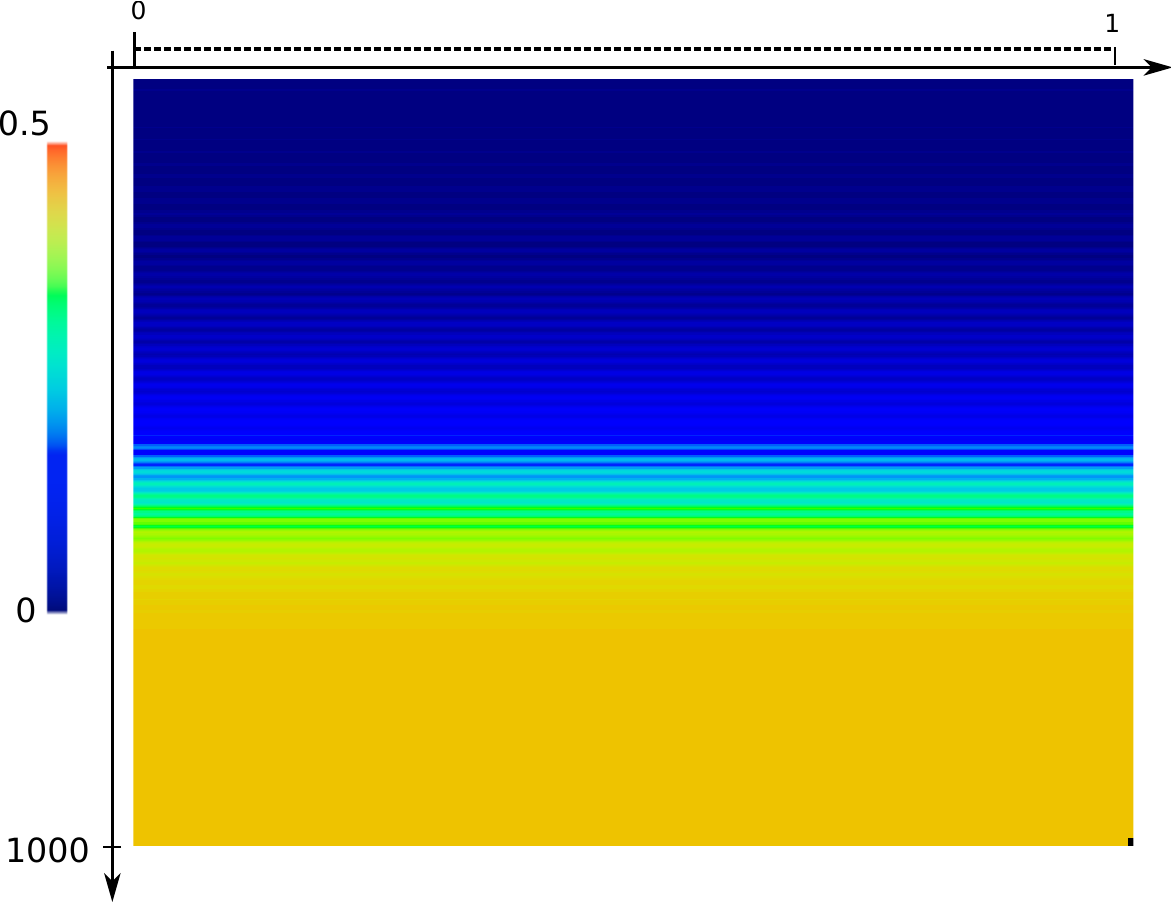}\label{fig:OscillationDelay}}\\
		\subfigure[$\tau=20$, IC1]{\includegraphics[width=.3\textwidth]{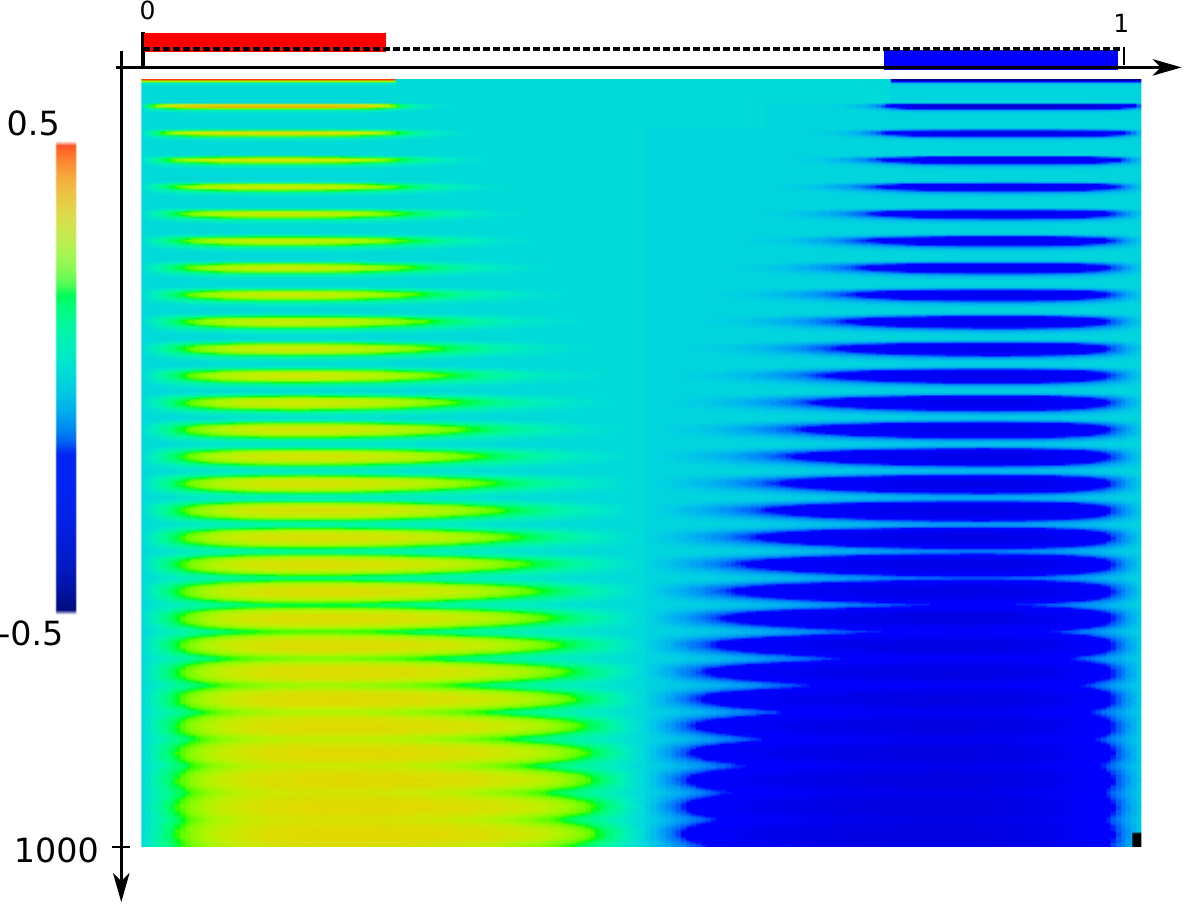}\label{fig:OscillationsDelaysNonHomo}\label{fig:OscillationDelayTwo}}\quad
		\subfigure[$\sigma\equiv 0$ and $\Lambda\equiv 0.1$, IC1]{\includegraphics[width=.3\textwidth]{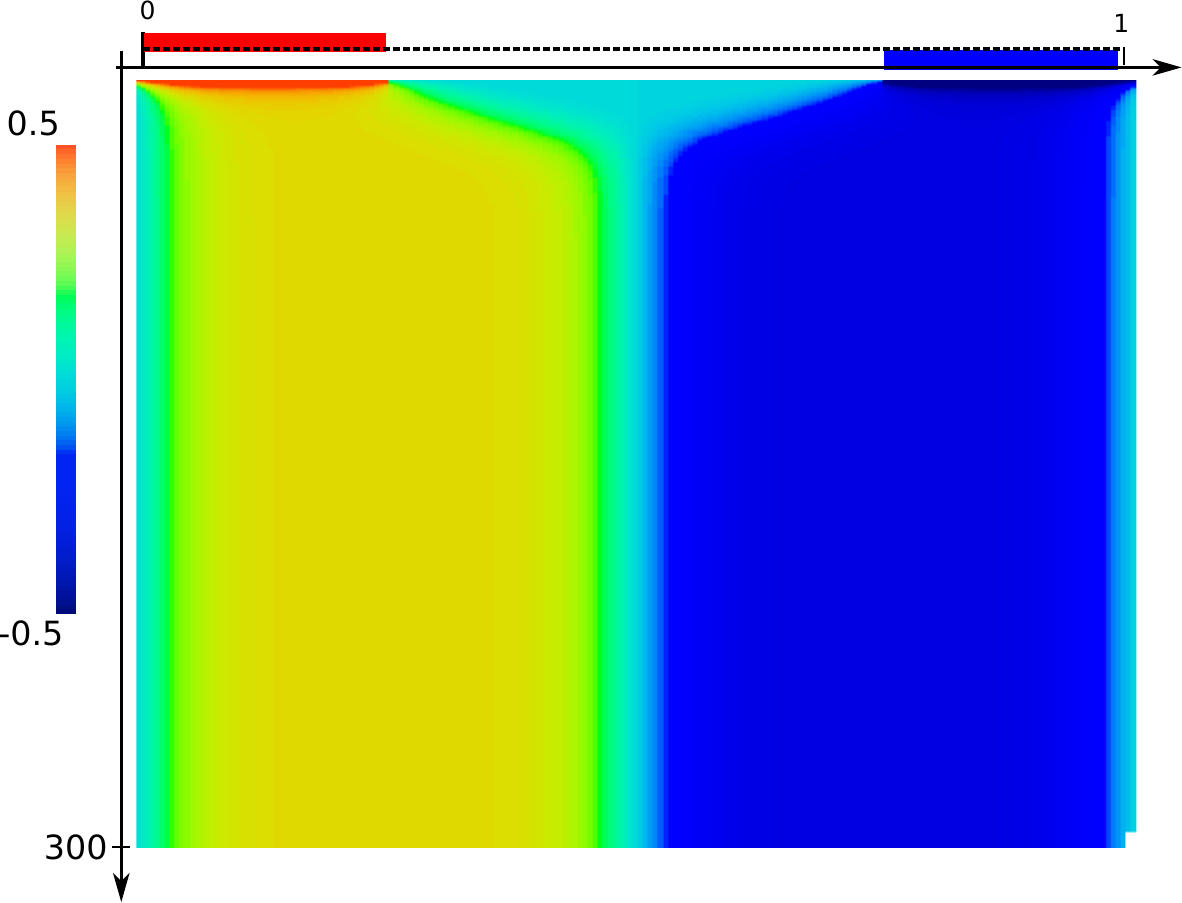}\label{fig:NonConstantMode}}\quad
		\subfigure[$\sigma\equiv 1$ and $\Lambda\equiv 0.1$, IC1]{\includegraphics[width=.3\textwidth]{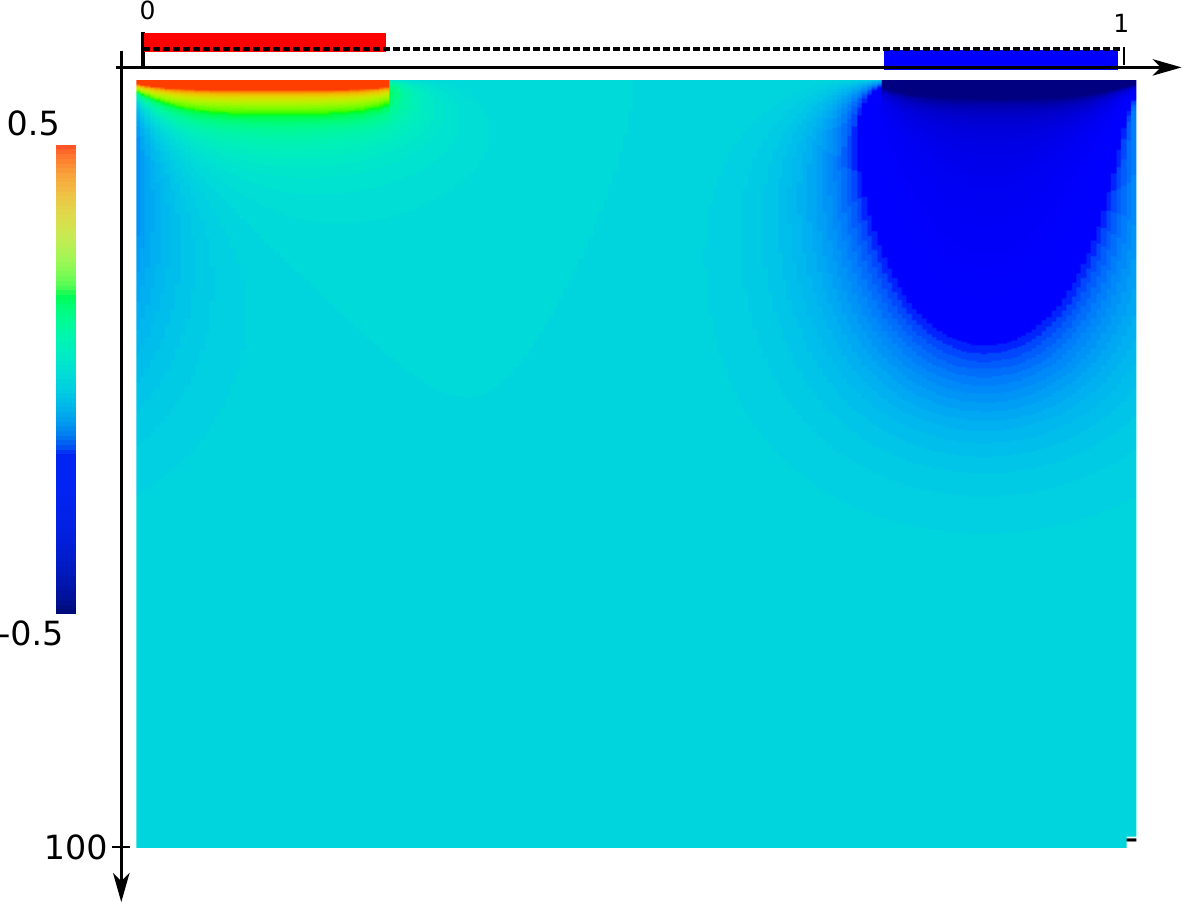}}\\
		\subfigure[$\sigma\equiv 0$ and $\Lambda\equiv 0.1$, IC2]{\includegraphics[width=.3\textwidth]{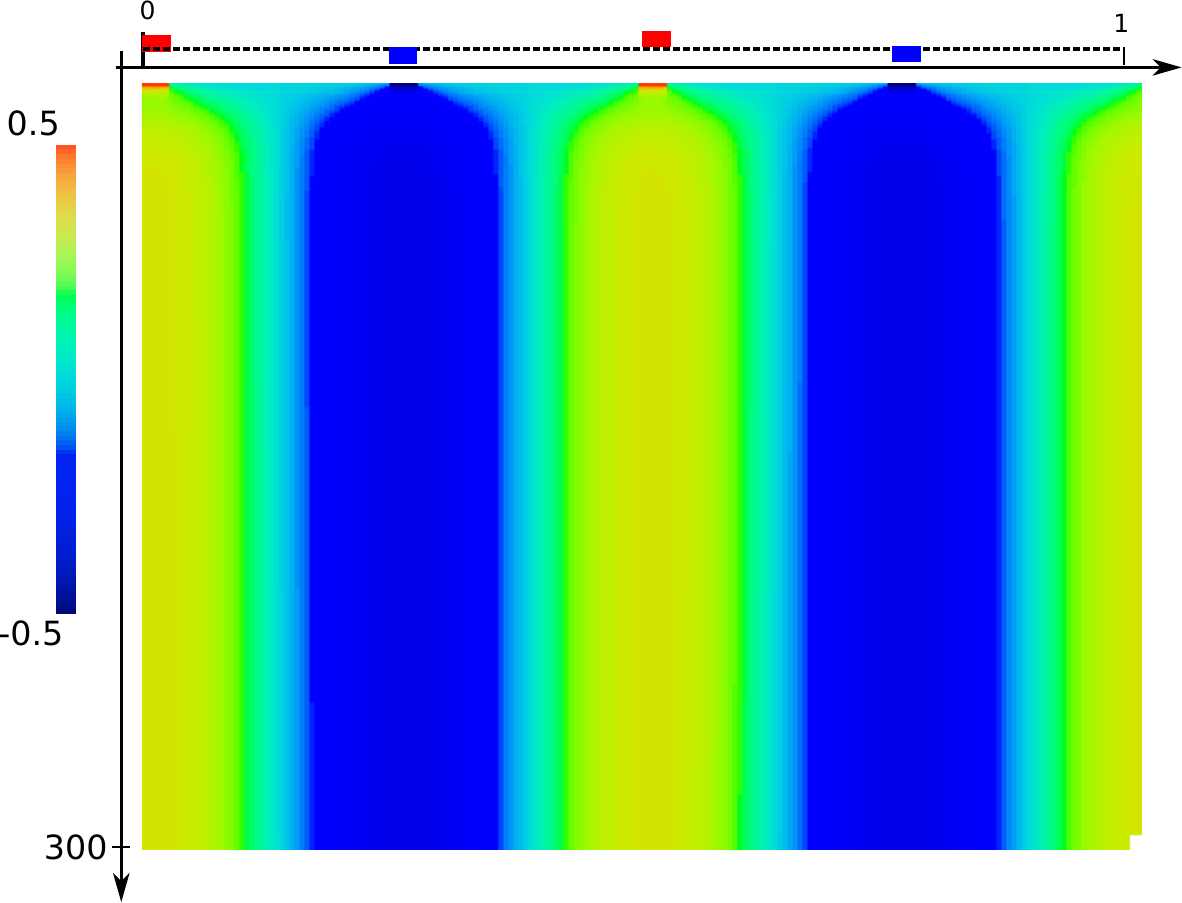}\label{fig:FourClusters}}
		\subfigure[$\sigma\equiv 0$ and $\Lambda\equiv 0.2$, IC2]{\includegraphics[width=.3\textwidth]{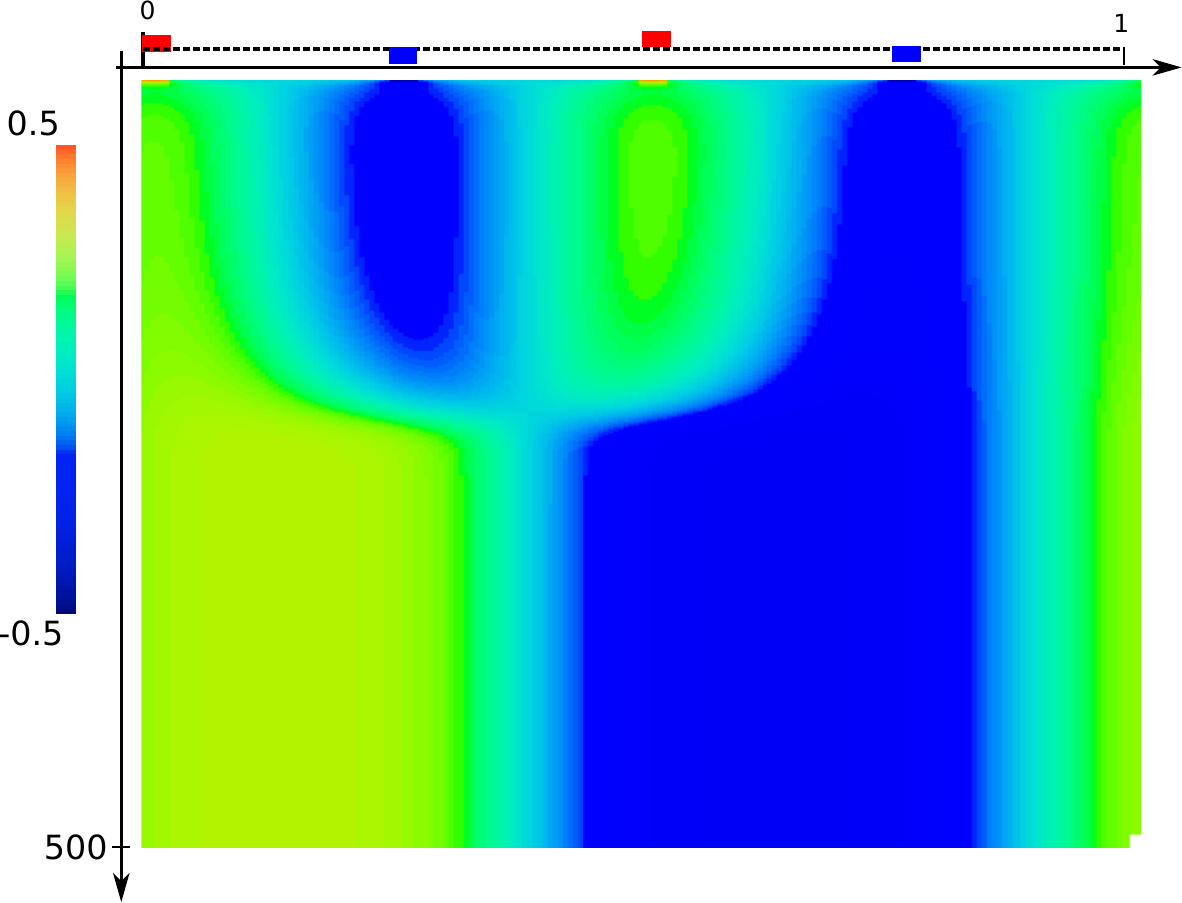}}
		\subfigure[$\sigma\equiv 0$ and $\Lambda\equiv 0.1$]{\includegraphics[width=.3\textwidth]{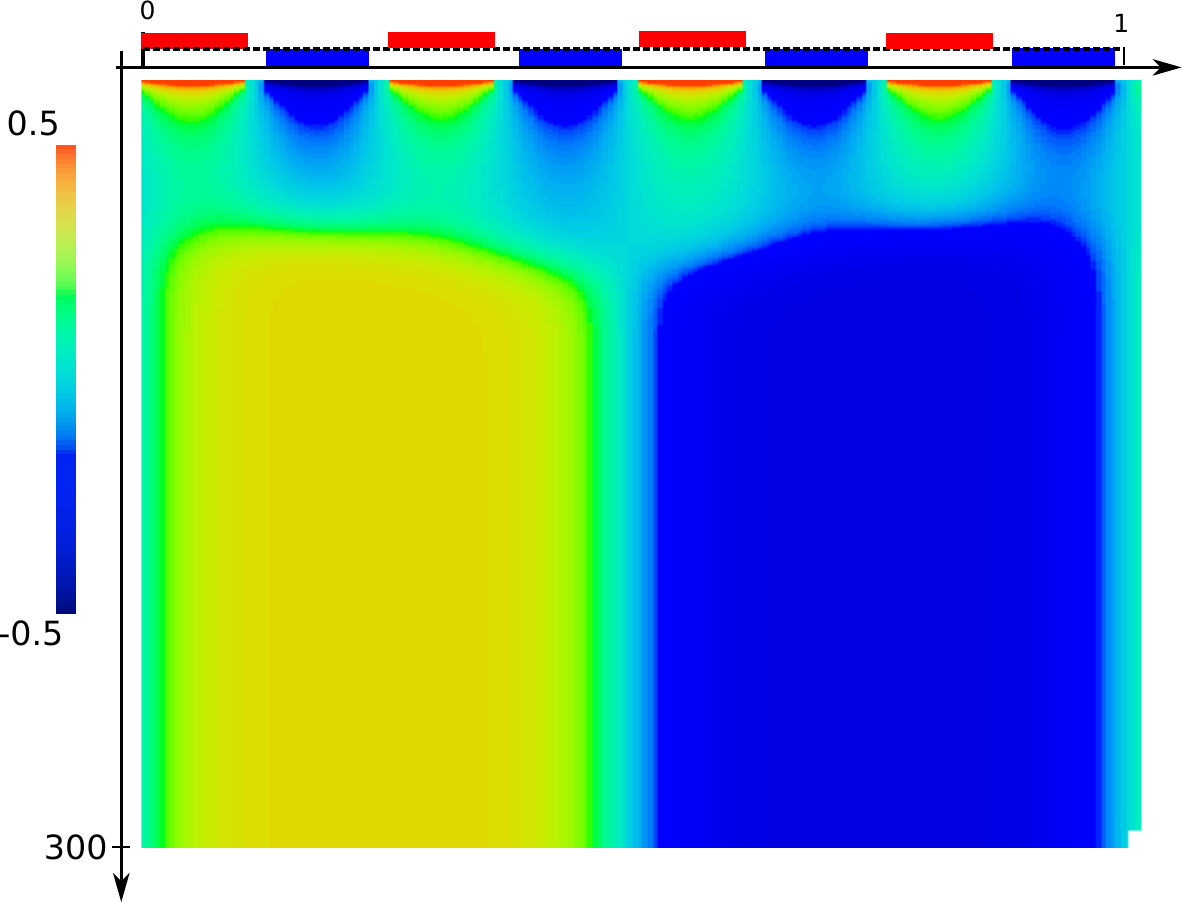}\label{fig:8clust}}
	\caption{Stabilization by noise of the spatially homogeneous state $(\mu_0,v_0)$. (a) shows the bifurcation diagram as a function of $\sigma$ and $\Lambda$ of the fully synchronized state, (b): Turing-Hopf codimension two bifurcations. (c) and (d) : $\tau=20$ illustrates the effect of delays producing transient oscillations, for homogeneous initial conditions (c) or IC1 (d). (e): no delay, initial conditions IC1 and small noise levels: non-spatially homogeneous state (mode $k=1$). (f): as noise is increased (here $\sigma$) the spatially homogeneous state $(\mu_0,v_0)$ is stabilized and attractive. (g): initial conditions IC2 shows mode $k=2$ excited, and as $\Lambda$ is increased (h), this state looses stability in favor of the mode $k=1$ before $(\mu_0,v_0)$ is stabilized. (i) shows the instability of the mode related to $k=4$. }
	\label{fig:OneLayer}
\end{figure}

\medskip
The simplified one-layer case chosen for simplicity of analytical exploration presents a limited set of spatio-temporal behaviors: only stationary solutions are found, either spatially homogeneous or characterized by a few typical modes. In particular, no wave or oscillatory activity was observed. In order to go beyond these phenomena, we now turn to the study of two layers neural fields.

\subsection{Dynamic Turing Patterns and noise-induced synchronization in a two-layers system}\label{sec:TwoLayer}

We consider in this section a more biologically relevant system composed of two-layers: an excitatory (modeling pyramidal neurons) and an inhibitory layer (modeling interneurons). This set up can be easily cast in our framework. Following the same lines, we can show that the propagation of chaos property applies and that the network equations uniformly converge towards an integro-differential McKean Vlasov equation. Firing-rate models have Gaussian solutions, whose mean and standard deviation will satisfy a coupled system of delayed integro-differential equations.

In details, let us label $1$ the excitatory and $2$ the inhibitory layer. Interconnections from the inhibitory or excitatory layers are exponentially-shaped: $J_{a}(r,r') = K_a\,e^{-\vert r-r'\vert/s_a}$ for $a=1$ or $2$, with $K_a^{-1}=\mathcal{J}_a=\int_{0}^1J_{a}(r,r')dr'$. The choice of the typical spatial extension of the kernel $s_a$ is a modeling issue. When considering $\Gamma$ as a functional space (e.g. orientation columns), excitation is local and inhibition is more distal, which motivates a choice $s_1>s_2$. If $\Gamma$ models the anatomical location of each neuron, inhibitory connections are globally characterized by shorter axons than excitatory axons, corresponding to $s_2>s_1$. 

We define the type function of a neuron (excitatory or inhibitory) by the function $\nu(i)\in\{1,2\}$. The connectivity kernels $J_{\nu}$ are multiplied by a typical connectivity coefficient between the different populations, $w_{\nu\nu'}$ for the deterministic interactions.
Similarly, the noise interaction kernels depend on the type of connection and are denoted $\sigma_{\nu,\nu'}(r,r')$ and are modeled by $\sigma_{\nu,\nu'}(r,r')=\sigma J_{\nu'}(r,r')$. The typical time constants $\theta_{\nu}(r)$ of all neurons are again considered constant and chosen as our time unit (i.e. $\theta=1$). In a network composed of $N_{\gamma\nu}$ neurons of type $\nu$ in the population located at $r_{\gamma}\in\Gamma$, the equation of neuron $i$ of type $\nu(i)=a\in\{1,2\}$, in population $\alpha$ at location $r_{\alpha}$ reads:
\begin{multline*}
\displaystyle{dV^{i}(t) = \Bigg( -V^{i}(t) + I_{a}(r_{\alpha}, t) + \sum_{\gamma=1}^{P(N)} \sum_{\nu=1}^2 \frac{1}{N_{\gamma\nu}} \sum_{j,\,p(j)=\gamma, \nu(j)=\nu} w_{a\nu} J_{\nu}({r_{\alpha},r_{\gamma}}) \; S(r_{\gamma},V^j(t-\tau(r_{\alpha},r_{\gamma}))) \Bigg) \, dt} \\
\displaystyle{+ \Bigg(\sum_{\gamma=1}^{P(N)} \sum_{\nu=1}^2 \frac{1}{N_{\gamma\nu}} \sum_{j,\,p(j)=\gamma, \nu(j)=\nu} \sigma J_{\nu}({r_{\alpha},r_{\gamma}}) \; S(r_{\gamma},V^j(t-\tau(r_{\alpha},r_{\gamma}))) \Bigg) \, dB^{\alpha,a,\gamma,\nu}_t+\Lambda(r_{\alpha},t) dW^{i}_t}.
\end{multline*}
The Gaussian attractive solutions have a mean and standard deviation that satisfy the delayed integro-differential equations:
\begin{equation}\label{eq:DDEIntegroDiff2Pops}
		\begin{cases}
			\derpart{\mu_1}{t}(r,t) & =-\mu_1(r,t) + \int_{\Gamma} \Big \{w_{11} J_{1}(r,r') f(r,\mu_1(r',t-\tau(r,r')),v_1(r',t-\tau(r,r')))\\
			 &\quad +w_{12}J_{2}(r,r') f(r,\mu_2(r',t-\tau(r,r')),v_2(r',t-\tau(r,r')))\Big\}\lambda(r')dr'+I_1(r,t)\\
			\derpart{\mu_2}{t}(r,t) &=-\mu_2(r,t) + \int_{\Gamma} \Big\{w_{21}J_{1}(r,r') f(r,\mu_1(r',t-\tau(r,r')),v_1(r',t-\tau(r,r')))\\
			& \quad +w_{22}J_{2}(r,r') f(r,\mu_2(r',t-\tau(r,r')),v_2(r',t-\tau(r,r')))\Big\}\lambda(r')dr' +I_2(r,t)\\
			\derpart{v_1}{t}(r,t) & =- 2\,v_1(r,t) + \sigma^2\int_{\Gamma} \Big\{ J_{1}^2(r,r') f^2(r,\mu_1(r',t-\tau(r,r')),v_1(r',t-\tau(r,r')))\\
			 &\quad +J_{2}^2(r,r') f^2(r,\mu_2(r',t-\tau(r,r')),v_2(r',t-\tau(r,r')))\Big\}\lambda(r')^2 dr'+ \Lambda_1^2(r,t)\\
			\derpart{v_2}{t}(r,t) & =-2 \,v_2(r,t) + \sigma^2\int_{\Gamma}  \Big\{ J_{1}^2(r,r') f^2(r,\mu_1(r',t-\tau(r,r')),v_1(r',t-\tau(r,r')))\\
			 &\quad +J_{2}^2(r,r') f^2(r,\mu_2(r',t-\tau(r,r')),v_2(r',t-\tau(r,r')))\Big\}\lambda(r')^2 dr'+\Lambda_2^2(r,t)
		\end{cases}
	\end{equation}
\begin{remark}
	Since the different layers are driven by independent Brownian motions, the covariance between the excitatory and inhibitory population is null. This property is similar to the second point of the remark after theorem~\ref{pro:GaussianSpace}. 
\end{remark}
Similarly to the single-layer case, we consider $\Gamma=\mathbbm{S}^1$ in our numerical and analytical work. Two other types of connectivity will be dealt with in~\ref{append:BoundaryConditions}: (i)\emph{reflective} boundary conditions in which the solution is virtually evenly continued at the boundaries $0$ and $1$ and the convolution is done on $\R$ instead of $[0,1]$ and (ii) \emph{zero} boundary conditions where the convolution only occurs on $[0,1]$ (which would correspond to a convolution on $\R$ virtually considering the activity null on $\R\setminus[0,1]$). 

The study of these equations is more complex than in the previous case. In order to present analytical results, we will first analyze a particular neural field (Network I) allowing analytical investigations of the solutions and accounting for complex phenomena driven by noise in our mean-field equations. This network is characterized by the connectivity matrix:
\begin{equation*}
	w=\left(\begin{array}{ll}
		1 & -1\\
	 	1 & 1
	\end{array}\right),
\end{equation*}
$\sigma=0$, $\Lambda_1=\Lambda_2=\Lambda$ and input currents $I_1=0$ and $I_2=-1$. The main interest of this example relies in the fact that we can characterize one spatially-homogeneous fixed point of the system, $\mu_i=0$, $v_i=\Lambda^2/2$ for $i\in\{1,2\}$. This fact will be extremely useful to understand how noise interferes with the stability of this fixed point.

The second network we will consider is closer from biological networks, and inspired by the parameters proposed by Wilson and Cowan in their seminal article~\cite{wilson-cowan:72}. The synaptic weights are chosen to be: 
\begin{equation}\label{eq:J}
	J=\left(\begin{array}{ll}
		15 & -12\\
		16 & -5
	\end{array}\right),
\end{equation}
and the input currents $I_1=0$, $I_2=-3$. In this network, we do not have any trivial fixed point, can only rely on numerical bifurcation analysis. It is important to note that (i) the methodology developed here is totally independent of the synaptic weights chosen and (ii) the phenomena exhibited in the sequel are relatively robust and remain qualitatively valid in a relatively large range of values around this matrix (a wide condition is given in~\cite{wilson-cowan:72}). We will see that most of the phenomena observed in the analytical study persist in this second case. A very interesting and surprising phenomenon absent in Network I appears in the second network, corresponding to noise-induced oscillations, and the transition between stationary and oscillatory solutions will be characterized.

\subsubsection{Network (I): Analytical developments}

Spatially homogeneous solutions for Network (I) satisfy the equation:
\[
\begin{cases}
	\dot{\mu}_1 & =-\mu_1+f(\mu_1(t-\tau),v_1(t-\tau))-f(\mu_2(t-\tau),v_2(t-\tau))+I_1\\
	\dot{\mu}_2 & = -\mu_2+f(\mu_1(t-\tau),v_1(t-\tau))+f(\mu_2(t-\tau),v_2(t-\tau))+I_2\\
	\dot{v}_1 &=-2 v_1 +\Lambda^2\\
	\dot{v}_2 &=-2 v_2 +\Lambda^2\\
\end{cases}\]
The variance equations are not coupled to the mean equations, and converge towards $\Lambda^2/2$. As stated, a trivial stationary spatially homogeneous solution is given by $\mu_1=\mu_2=0$, $v_1=v_2=\Lambda^2/2$. The stability of this solution point is governed by the properties of the characteristic matrix governing the linear stability for the means $(\mu_1,\mu_2)$:
\[A(\zeta)=-(\zeta+1) Id + \frac{g}{\sqrt{2\pi(1+g^2\Lambda^2/2)}} J e^{-\zeta \tau},\]
whose eigenvalues (the \emph{characteristic roots}) are:
\[\nu_{\pm} = -(\zeta+1)+\frac{g}{\sqrt{2\pi(1+g^2\Lambda^2/2)}} e^{-\zeta\tau} (1\pm \mathbf{i})\]
and the \emph{characteristic equation} $\Delta(\zeta)\eqdef det(A(\zeta))=0$. Solutions of this equations correspond to cases where at least one of the characteristic roots vanishes, i.e. to values of $\zeta$ such that:
\begin{equation}\label{eq:CharacteristicRootNecessary}
-(\zeta+1)+\frac{g}{\sqrt{2\pi(1+g^2\Lambda^2/2)}} e^{-\zeta\tau} (1\pm \mathbf{i}) =0
\end{equation}
This equation can be solved using the complex branches of Lambert's $(W_k)_{k\in\mathbbm{Z}}$ functions (see e.g.~\cite{corless:96}):
\begin{equation}\label{eq:CharacteristicRoot}
	\zeta_{\pm}^k=-1+\frac 1 {\tau} W_{k}\left(\frac{g}{\sqrt{2\pi(1+g^2\Lambda^2/2)}} \tau e^{\tau}(1\pm \mathbf{i})\right).	
\end{equation}
 The stability of the trivial solution considered, governed by the sign of the real part of the uppermost eigenvalue, is given by the real branch $W_0$ of Lambert function, and if the argument has a real part greater than $-e^{-1}$ the root is unique. If this is not the case, two eigenvalues have the same real part (corresponding to $k=0$ or $k=-1$). 

We observe that the argument of the Lambert function in the expression~\eqref{eq:CharacteristicRoot} has a modulus that decreases towards $0$ as $\Lambda$ or $\tau$ go to infinity. For fixed values of $\tau$, the rightmost eigenvalue given by $k=0$, decreases towards $-1$ as $\Lambda$ is increased (see Fig.~\ref{fig:LambertFunction}), and there exists a value $\Lambda_c(\tau)$ such that for any $\Lambda>\Lambda_c(\tau)$ the fixed point $0$ is stable: again, noise has a stabilizing effect on this equilibrium. This stabilization appears through a Hopf bifurcation, and periodic behaviors are found for $\Lambda<\Lambda_c(\tau)$, as shown in the bifurcation diagram~\ref{fig:BifDiagZero} for a fixed value of the delays, $\tau=0.5$. For fixed values of $\Lambda$, as $\tau$ is increased, the real parts of the eigenvalues increase and might switch from positive to negative (see figure~\ref{fig:LambertFunction}). 

In order to quantitatively identify Turing-Hopf bifurcations, we use the same technique as in section~\ref{sec:OneLayer} following~\cite{hale-lunel:93,shayer-campbell:00}. Using the fact that necessarily, Hopf bifurcations correspond to purely imaginary characteristic roots $\zeta=\mathbf{i}\omega$, we obtain:
\begin{equation}\label{eq:HopfBifurcation}
	-(\mathbf{i}\omega +1) =-\frac{g}{\sqrt{2\pi(1+g^2\Lambda^2/2)}} e^{-\mathbf{i}\omega\tau} (1\pm \mathbf{i})
\end{equation}
which, taking the squared modulus of these imaginary numbers, give the equality:
\begin{equation}\label{eq:Hopfomega}
	\omega^2 = \frac{g^2}{\pi({1+g^2\Lambda^2/2})}-1
\end{equation}
The positivity of this quantity implies that:
\[\Lambda^2 \leq (\Lambda^*)^2\eqdef 2\left (\frac 1 \pi -\frac 1 {g^2}\right ).\]
This property indicates that necessarily, for noise intensities greater than $\Lambda^*$, the fixed point $(0,\Lambda^2/2)$ is stable, again pointing towards a stabilization effect of noise. Moreover, since eigenvalues have increasing real parts as delays is increased, this condition implies the existence of a vertical asymptote, which is indeed observed in the numerical computation of the characteristic roots, Figs.~\ref{fig:HopfsAnalytic} and~\ref{fig:HopfsAnalyticAccu}. 

When $\Lambda<\Lambda^*$, equating the argument of both sides of equality~\eqref{eq:HopfBifurcation}, we get:
\[\tau = \frac{-\arctan(\omega)\pm\frac{\pi}{4} + 2\,k \pi}{\omega}\]
for $k\in\mathbbm{Z}$. This relationship can be written in closed form as a function of the parameters using the expression of $\omega$ obtained in equation~\eqref{eq:Hopfomega}. The different curves of Hopf bifurcations are plotted in Figure~\ref{fig:Analytic}. We observe a cascade of Hopf bifurcation accumulation at $\Lambda=\Lambda^*$ as delays are increased. For large delays, irregular transient behaviors will arise, corresponding to the very complex landscape of the phase plane, as displayed in Fig.~\ref{fig:WeirdTransient}. We chose for instance to display the transient solution for $\tau=5$, a case where the delays are small enough so that we can resolve the presence of different limit cycles trapping the solution transiently. This case hence makes explicit the dependence on noise levels of qualitative behaviors of the system for finite populations networks. 

\begin{figure}
	\begin{center}
		\subfigure[$x\mapsto \Re(W_0(x (1+\mathbf{i})))$ ]{\includegraphics[width=.2\textwidth]{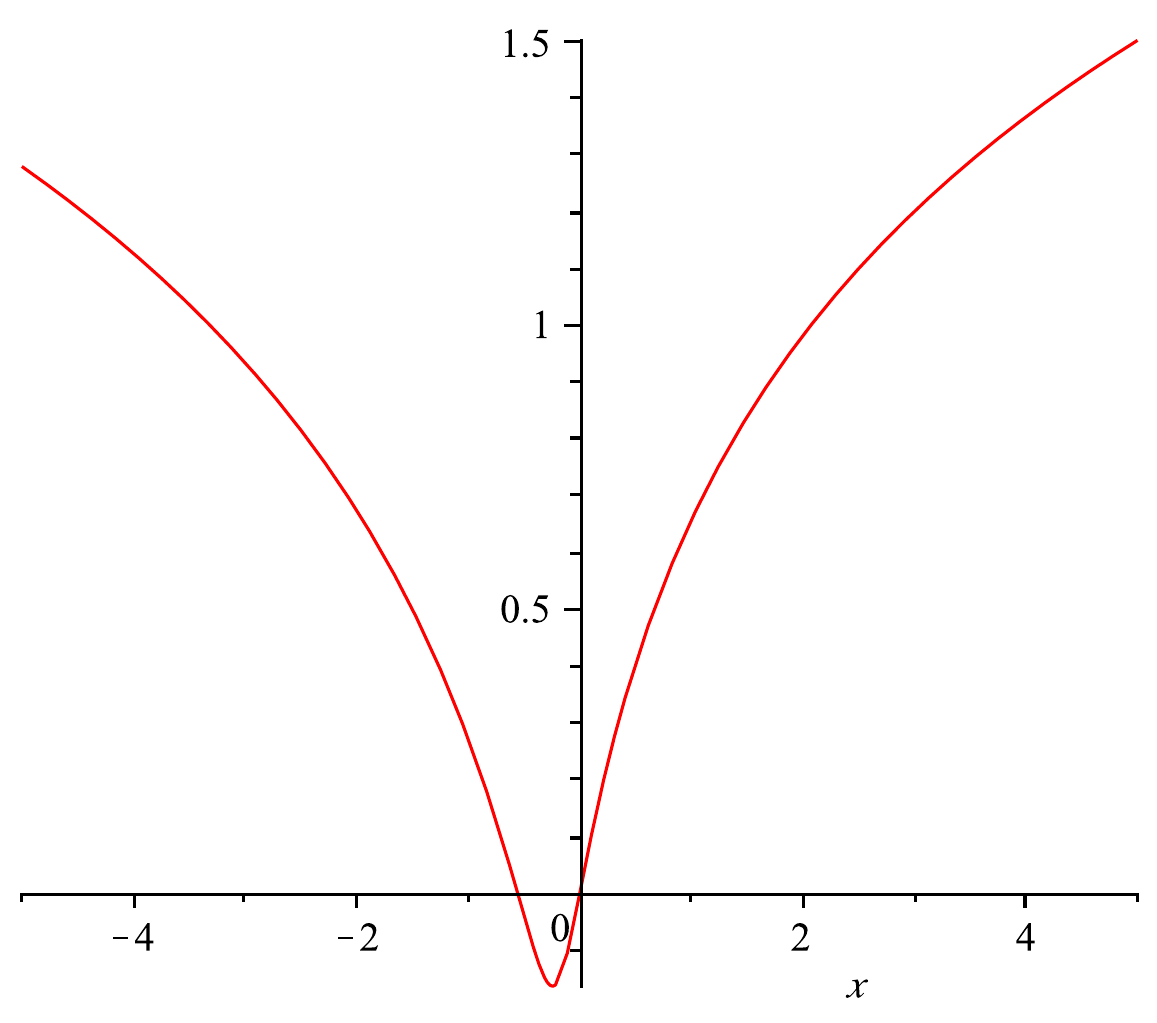}\label{fig:LambertFunction}}\quad
		\subfigure[Hopf bifurcations]{\includegraphics[width=.2\textwidth]{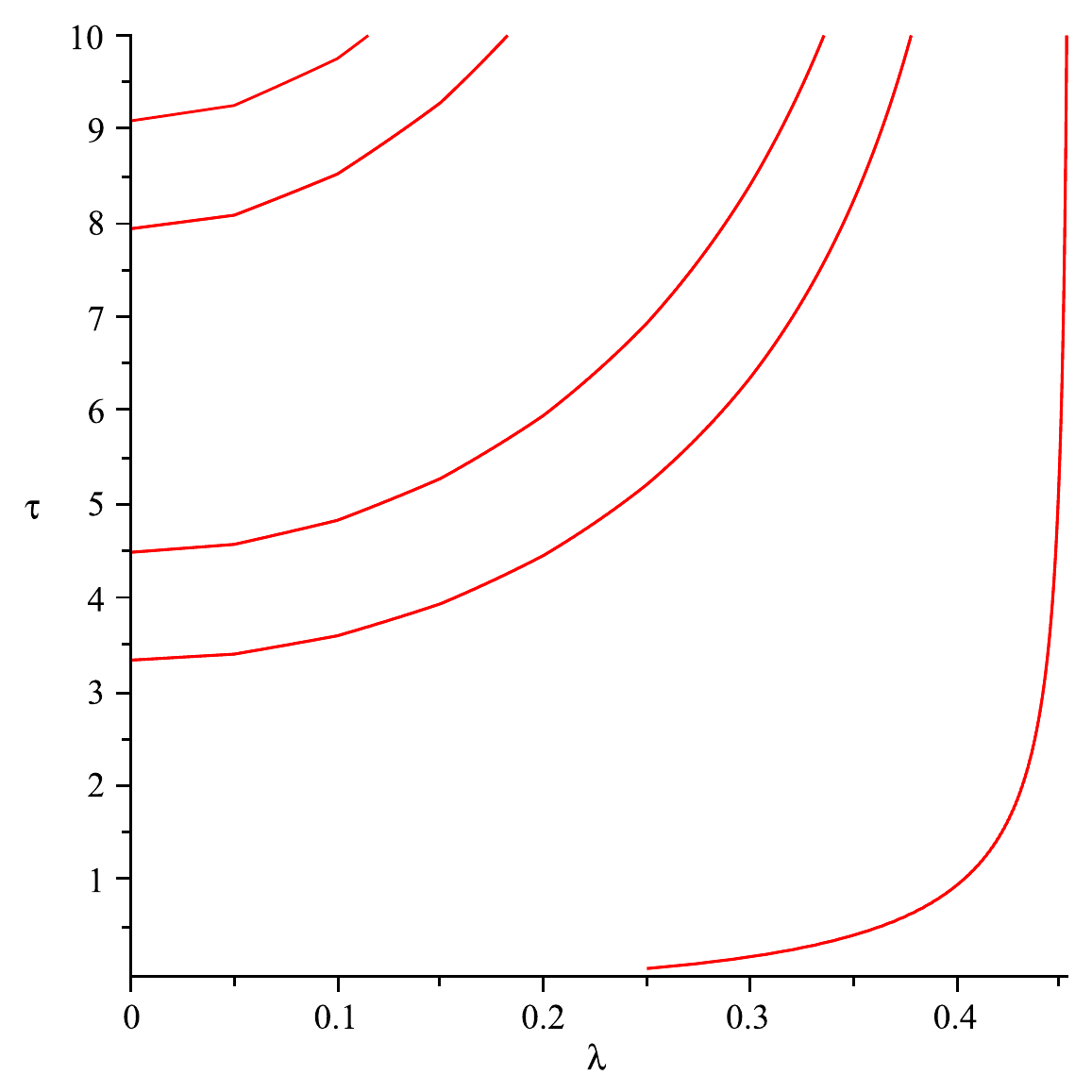}\label{fig:HopfsAnalytic.pdf}}\quad
		\subfigure[Accumulation]{\includegraphics[width=.2\textwidth]{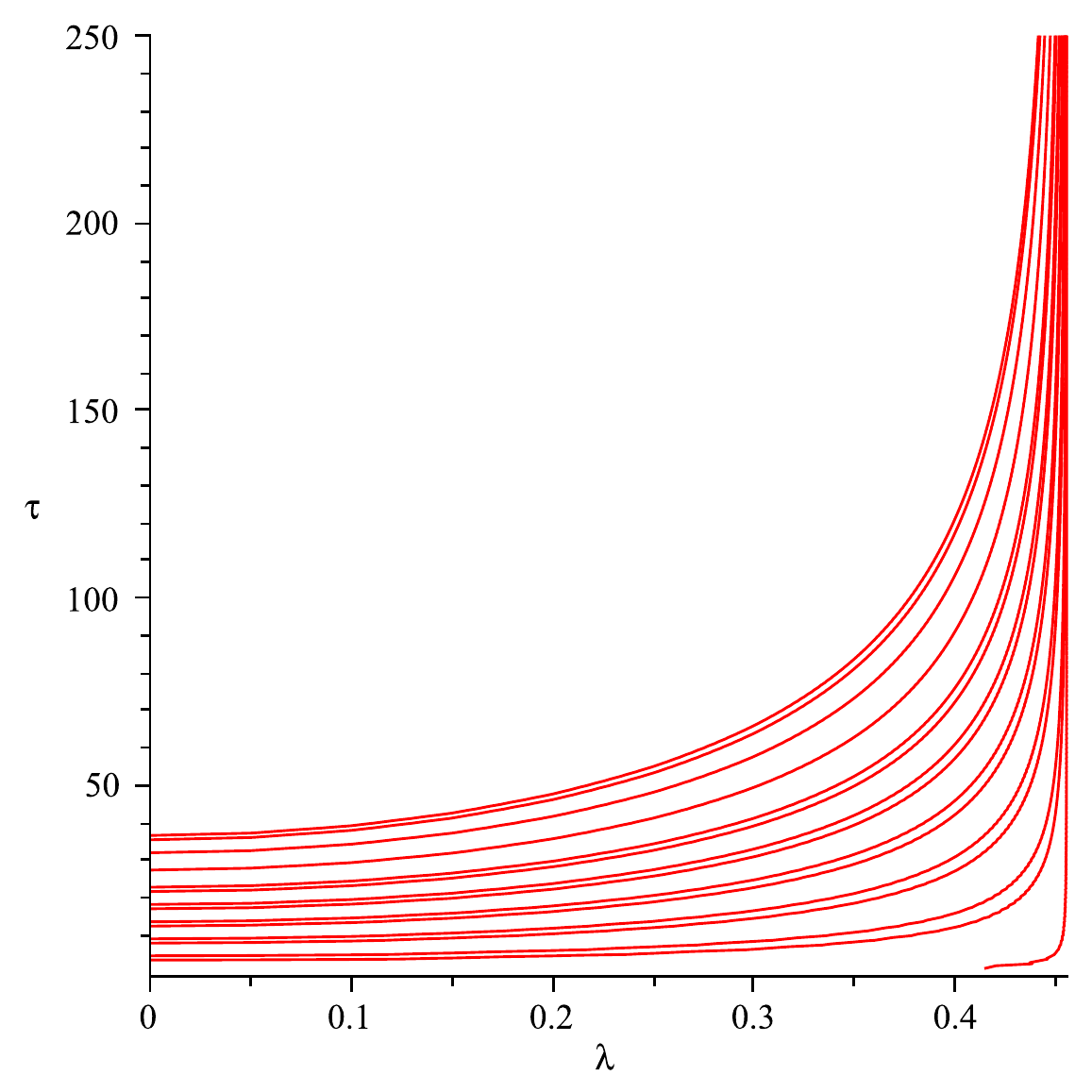}\label{fig:HopfsAnalyticAccu}}\quad
		\subfigure[Chaotic transient, $\tau=5$]{\includegraphics[width=.2\textwidth]{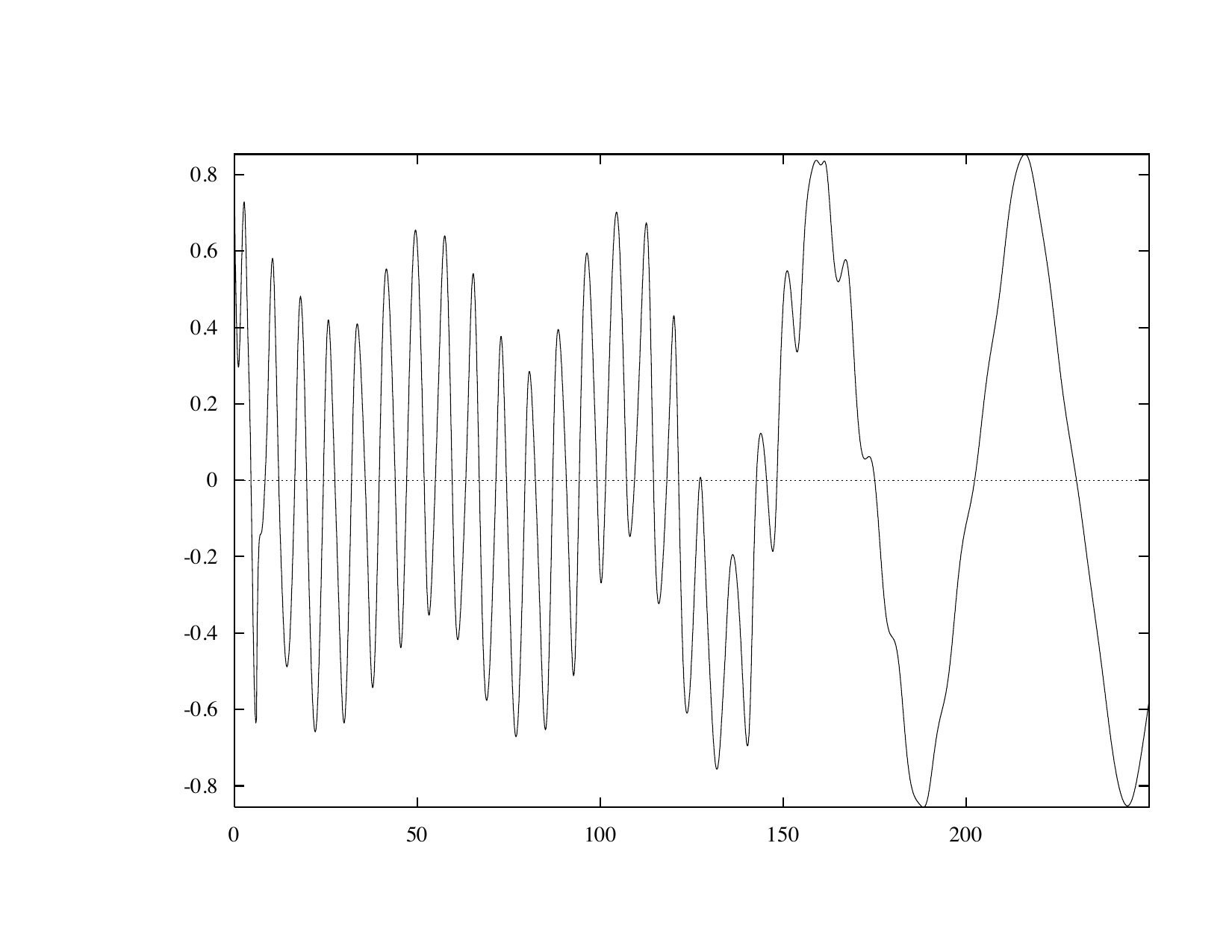}\label{fig:WeirdTransient}}\\
		\subfigure[$\tau=0.5$: stabilization by noise]{\includegraphics[width=.3\textwidth]{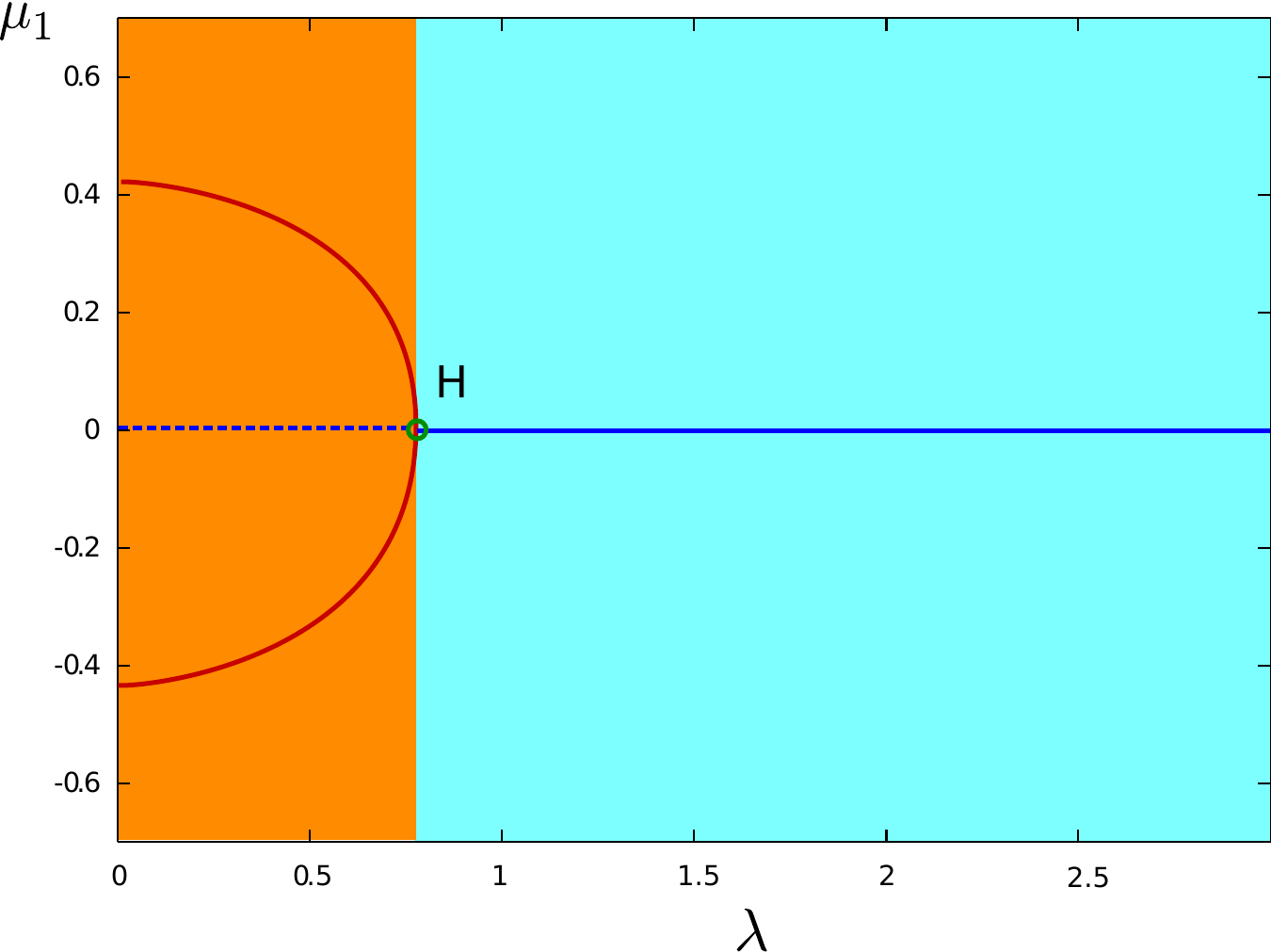}\label{fig:BifDiagZero}} \quad
		\subfigure[$\tau=0.5$, $\Lambda=0.5$: synchronized oscillations]{\includegraphics[width=.3\textwidth]{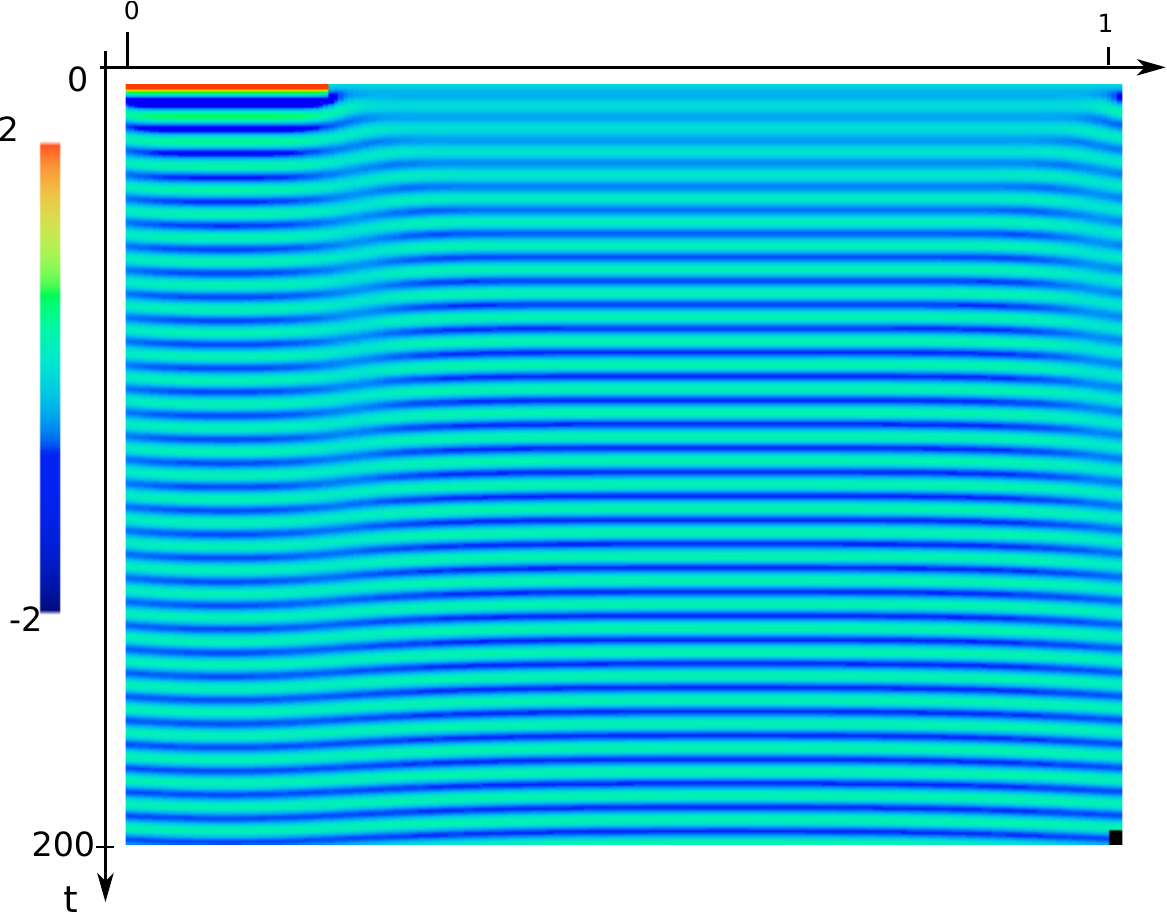}\label{fig:SpaceTimeNet1.pdf}} \quad
		\subfigure[$\tau=0.8$, $\Lambda=0.7$: stationary solution]{\includegraphics[width=.3\textwidth]{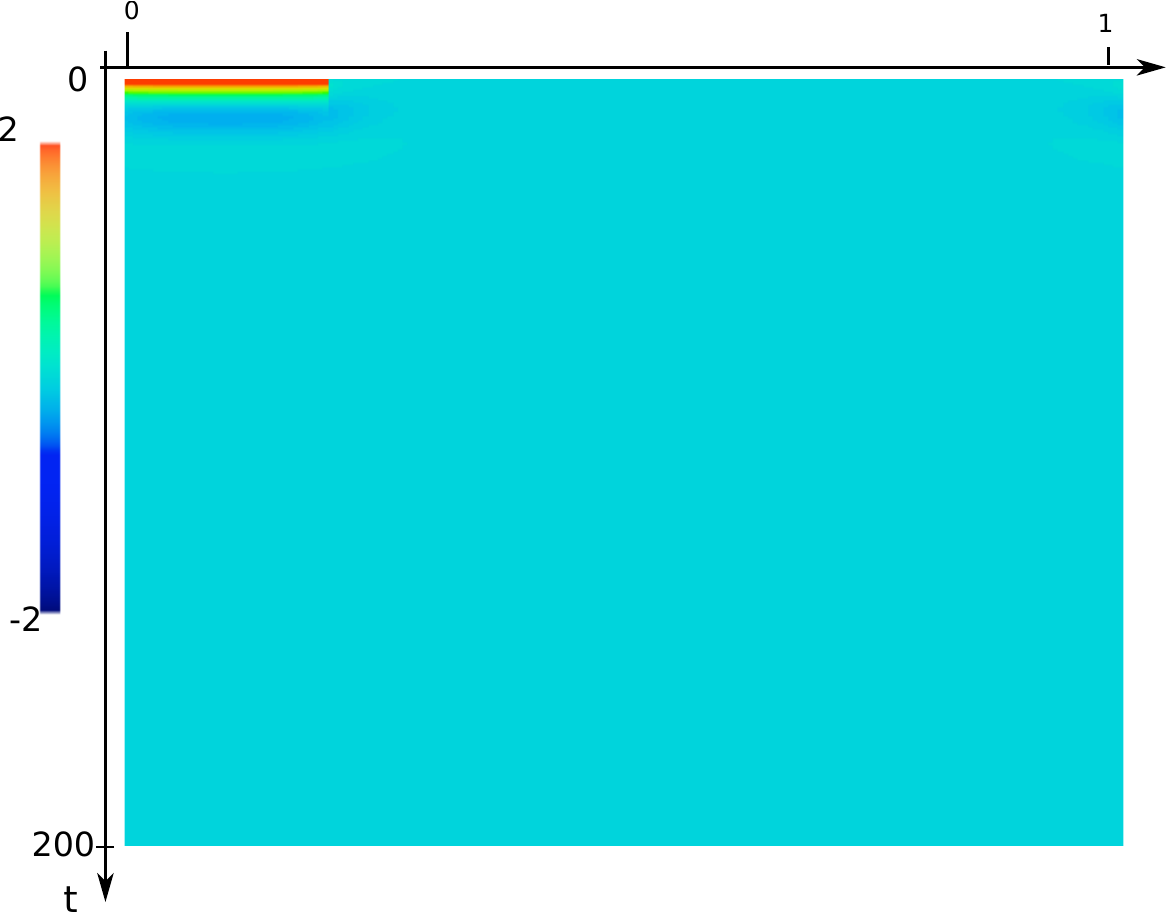}\label{fig:SpaceTimeNet1Sync.pdf}} \\
	\end{center}
	\caption{Dynamics of Network I: characteristic roots around the trivial fixed point. As delays are increased, several Hopf bifurcations arise and accumulate around the same value. (a): Shape of the Lambert function $x\mapsto \Re(W_0(x(1+\mathbf{i})))$, (b): cascade of Hopf bifurcations and (c): locus of the Hopf bifurcations for the 15 first characteristic root continuated for large values of $\tau$ (d): Transient regime for $\tau=5$. (e): Bifurcation diagram of the spatially homogeneous solutions as a function of $\Lambda$ for $\tau=0.5$. (f) and (g) illustrate the fact that spatially homogeneous solutions are recovered for non-spatially homogeneous initial conditions.  }
	\label{fig:Analytic}
\end{figure}

Simulations of the spatially extended networks with delays do not present any stable non-spatially homogeneous state: after a short transient phase, the spatially homogeneous state described by the above analysis takes over. A very similar analysis as the one performed in the one-layer case is here again possible.However, no non-spatially homogeneous solutions are found, and we always obtain the spatially homogeneous solution as permanent regime (see Figures~\ref{fig:SpaceTimeNet1} and~\ref{fig:SpaceTimeNet1Sync} representing $\mu_1(r,t)$ as a function of space (abscissa) and time (ordinate)).

This example illustrates the fact that noise can destroy oscillations: an originally oscillating state disappears as noise increases. We now turn to the study of Network (II), in which case noise will have a surprising structuring effect on the solution through the creation of regular oscillations.

\subsubsection{Network II: Noise-induced oscillations, wave and bump chaotic splitting.}\label{sec:periodicBounds}
In the case of network II, since we keep considering convolutional interactions and periodic domain, the system has spatially homogeneous solutions (Proposition~\ref{pro:Synchro}). We start by analyzing the nature of these solutions before numerically analyzing non-spatially homogeneous solutions. The parameters related to Network II do not allow computing in closed form possible spatially homogeneous equilibria, hence we will rely on numerical bifurcation analysis.

The spatially homogeneous solutions are given by equations~\eqref{eq:DDEIntegroDiff}. The bifurcation diagram of this system as a function of the noise intensity $\Lambda$ and the delay $\tau$ is given in Figure~\ref{fig:BifDiagsDDE}. It presents two branches of fixed points, and as delays are increased, one of the branches of fixed points undergoes a cascade of Hopf bifurcations as observed analytically in the case of Network I. The different curves of Hopf bifurcations accumulate on a vertical asymptote in a parameter region where the related fixed point is unstable. Similar to the case of Network I, these Hopf bifurcations arise on an unstable fixed point and are not related to rightmost eigenvalues, and hence have no effect on the number or stability of fixed points. Moreover, these are subcritical, hence associated with unstable limit cycles, the accumulation of which produces a complex landscape resulting in very irregular transient behaviors. Fixing $\tau=5$ corresponds to a case where the system displays seven Hopf bifurcations. The codimension two bifurcation diagram of the system as a function of the input current $I_1$ and the noise level $\Lambda$ is given in Figure~\ref{fig:BTDegenerate}. We observe that the saddle-node bifurcation forms a cusp, and on one of the branch of the codimension two saddle-node bifurcation curve appears a Bogdanov-Takens and two degenerate Bodganov-Takens bifurcations. When fixing a value for the delays to $\tau=0.1$ (green line of figure~\ref{fig:Codim2}), we observe in this diagram different ranges of parameter values corresponding to different asymptotic behaviors: stationary solution (blue region), bistability between a stationary and a periodic solution (yellow region), and periodic solutions (orange region, see Fig.~\ref{fig:Cod2Noise}). Oscillations in the mean-field equations correspond to phase-locked oscillations of individual neurons since they all have the same probability distribution (see Fig.~\ref{fig:Behaves15}).

Besides the stabilization by noise already discussed, this bifurcation diagram identifies a very surprising effect of noise, shaping the qualitative activity: as noise is increased, stationary solutions give place to synchronized oscillations for intermediate values of noise (see Figure~\ref{fig:Behaves15}), and as noise is further increased, these synchronized oscillations disappear in favor of another stationary behavior. This is a very counter-intuitive phenomenon, as noise generally tends to alter fine structures of the solutions. 

This phenomenon is displayed in the diagrams~\ref{fig:BifDiagsDDE}(e)-(g). The codimension two bifurcation diagram~\ref{fig:Cod2Noise} presents a Hopf, a saddle-node and a saddle-homoclinic bifurcation curves, separating the diagram into three qualitatively distinct zones: region (A) where the system features one stable and two unstable fixed points, separated by the saddle-homoclinic bifurcation curve from a bistable zone (B, yellow) where the moment equations presents an additional stable periodic orbit, (C) where the system has a periodic orbit and an unstable fixed point and (D) where the system has a unique stable fixed point. Zone (B) is separated from (C) by a saddle-node bifurcation manifold and (C) separated from (D) by the Hopf bifurcation manifold. We observe that $\sigma$ and $\Lambda$ have qualitatively the same effect on the dynamics. As examples are plotted codimension 1 bifurcation diagrams for $\Lambda=0.1$ as a function of $\sigma$ and for $\sigma=0.1$ as a function of $\Lambda$ (black lines in~\ref{fig:Cod2Noise}). 

\begin{figure}
	\begin{center}
		\subfigure[Codimension 2 bifurcations in $(\Lambda,\tau)$]{\includegraphics[width=.33\textwidth]{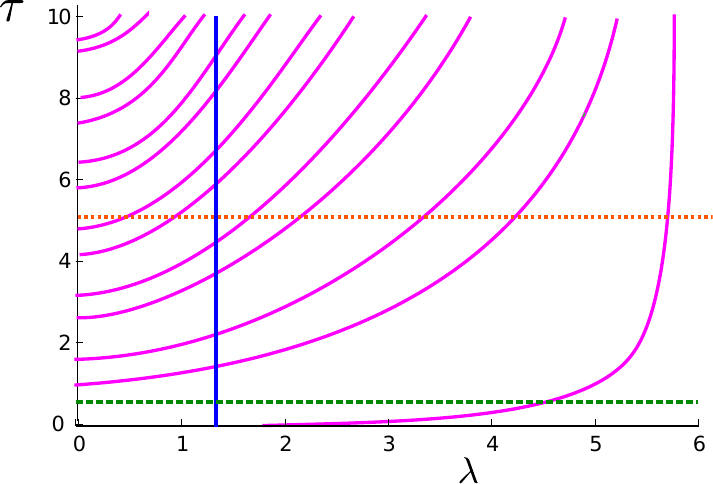}\label{fig:Codim2}}\quad 
		\subfigure[3 first Hopf bifurcations, large delays]{\includegraphics[width=.30\textwidth]{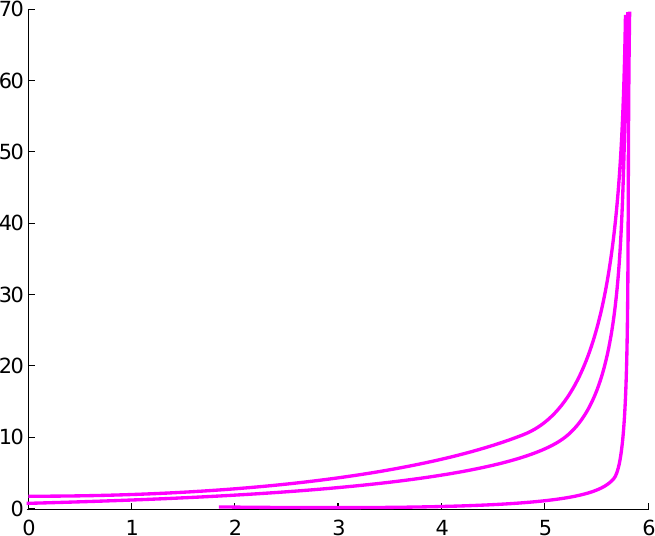}\label{fig:Accumulation}}\quad
		\subfigure[Codimension 2 bifurcations in $(I_1,\Lambda)$,  $\tau=5$]{\includegraphics[width=.30\textwidth]{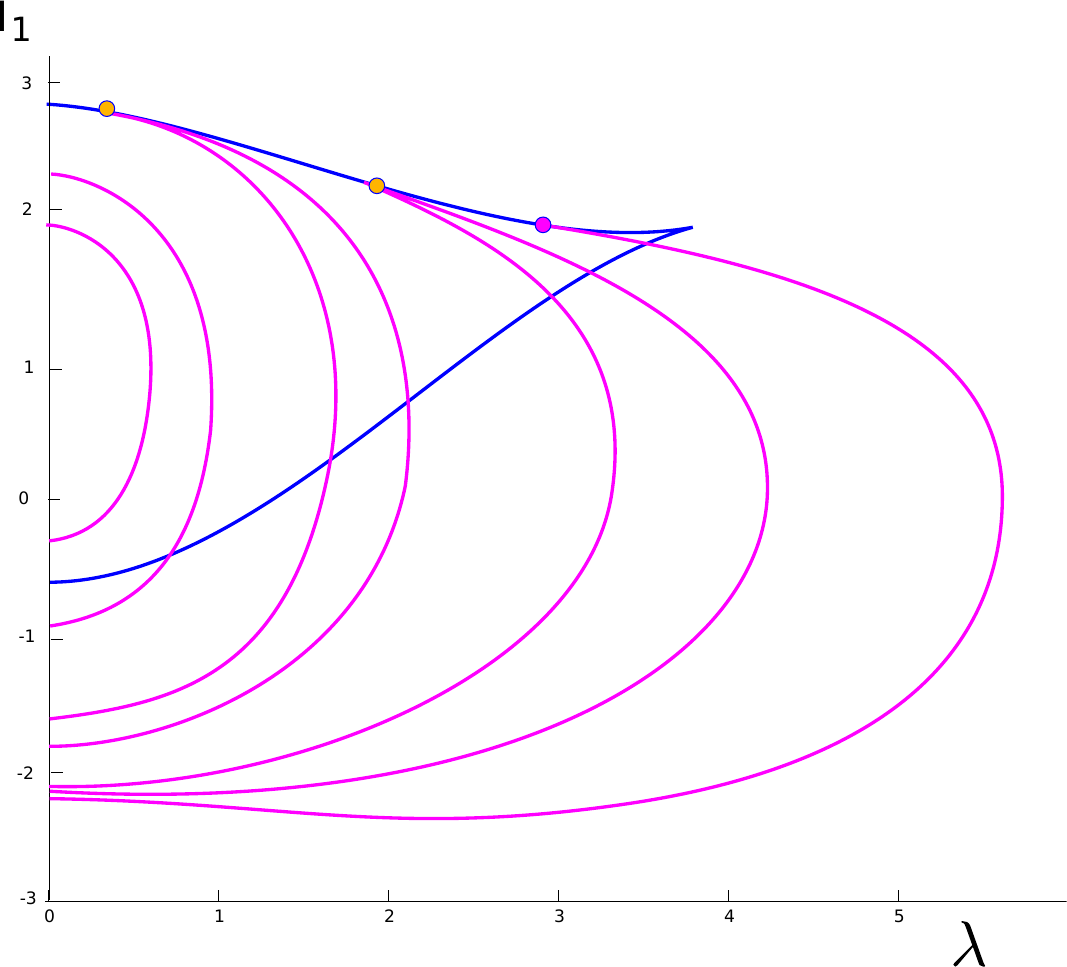}\label{fig:BTDegenerate.pdf}}\\
		\subfigure[solution, $t\mapsto \mu_1(t)$ for $\tau=5$, $\Lambda=0.1$]{\includegraphics[width=.30\textwidth]{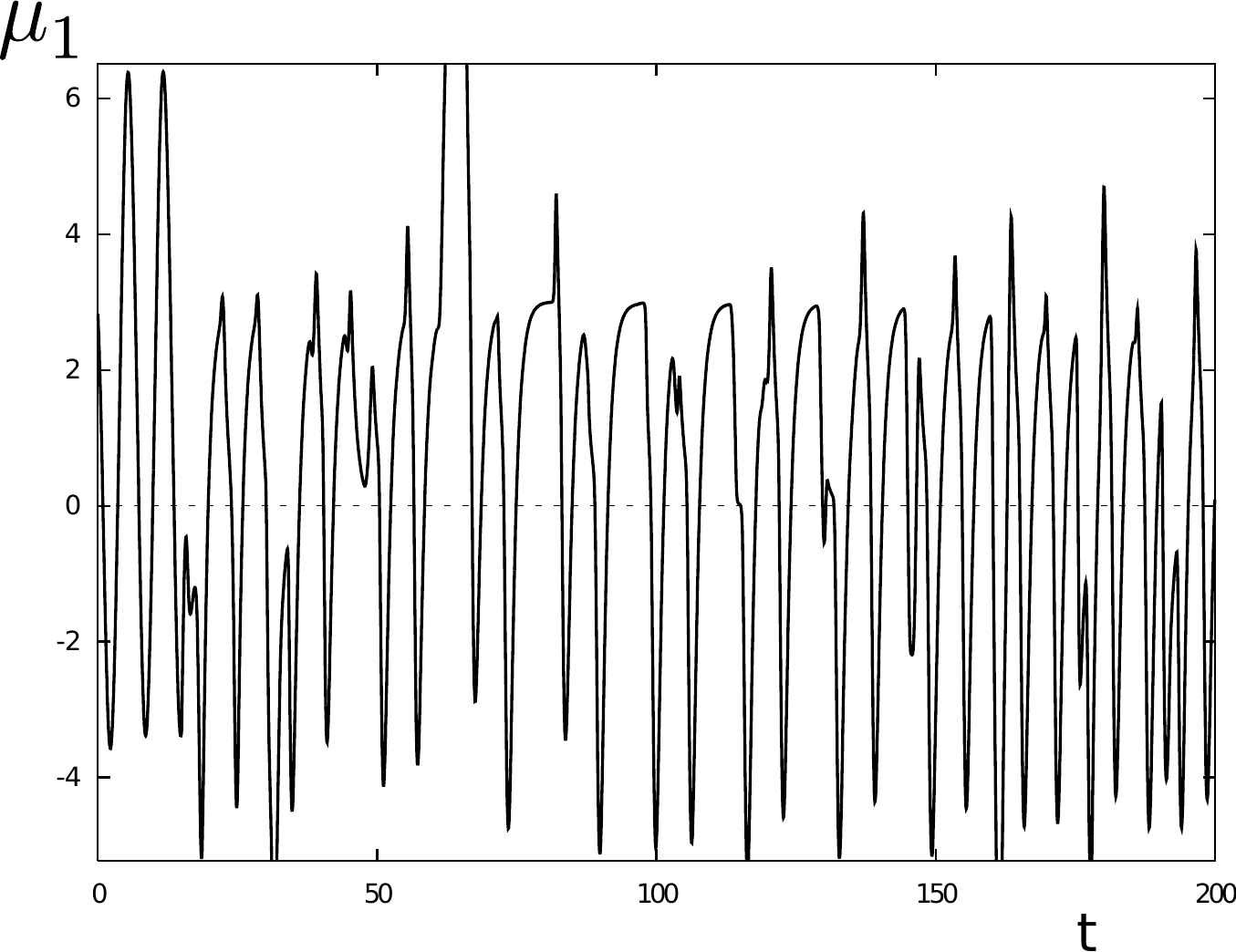}\label{fig:Chaos}}\qquad 
		\subfigure[Codimension 2 diagram in $(\Lambda,\sigma)$]{\includegraphics[width=.35\textwidth]{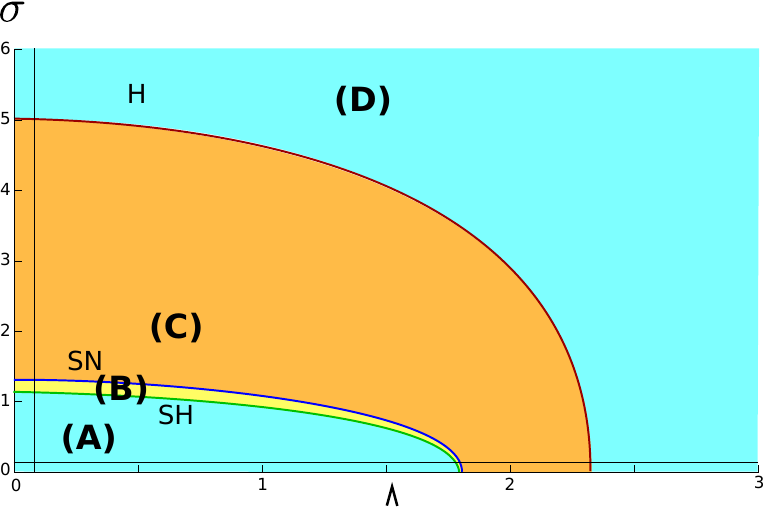}\label{fig:Cod2Noise}}\\
		\subfigure[Bifurcation diagram in $\Lambda$ with $\sigma=0.1$]{\includegraphics[width=.3\textwidth]{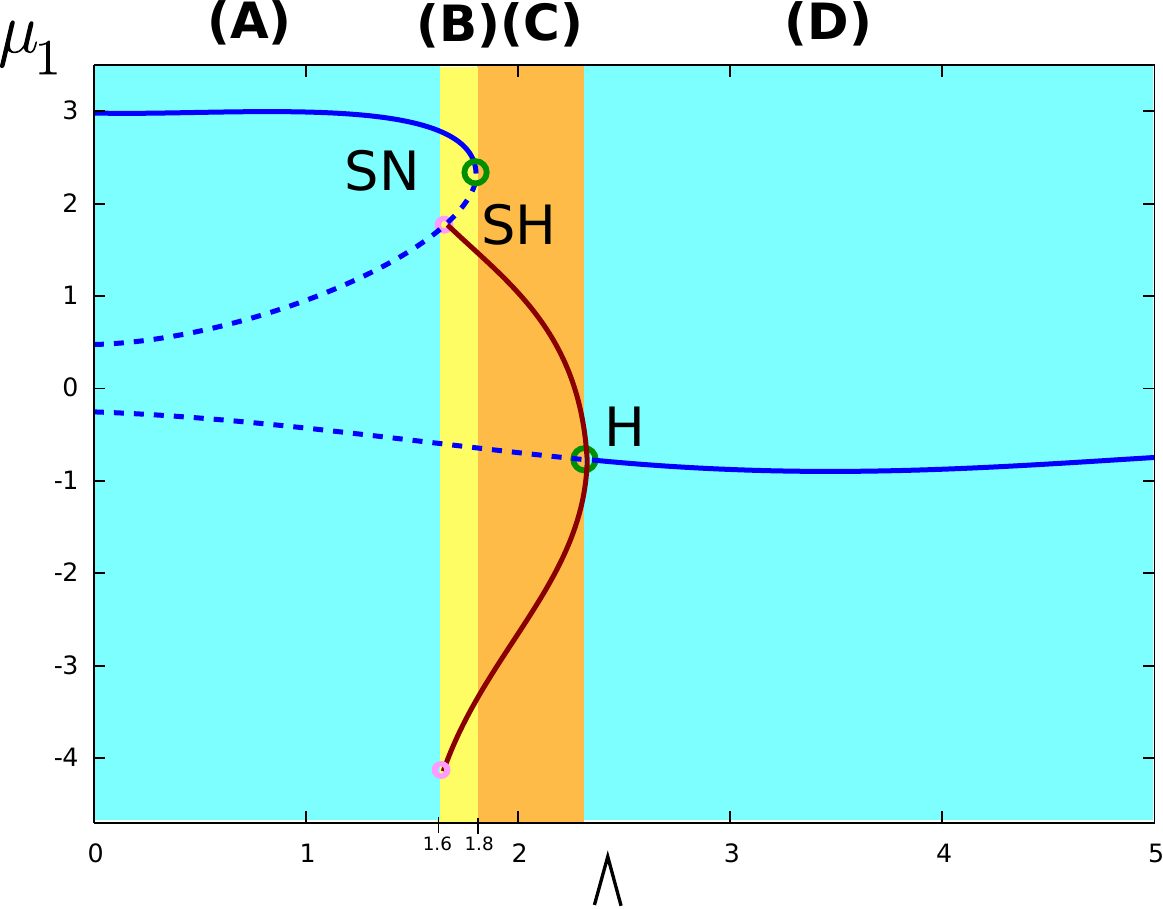}}
		\subfigure[Bifurcation diagram in $\sigma$ with $\Lambda=0.1$]{\includegraphics[width=.3\textwidth]{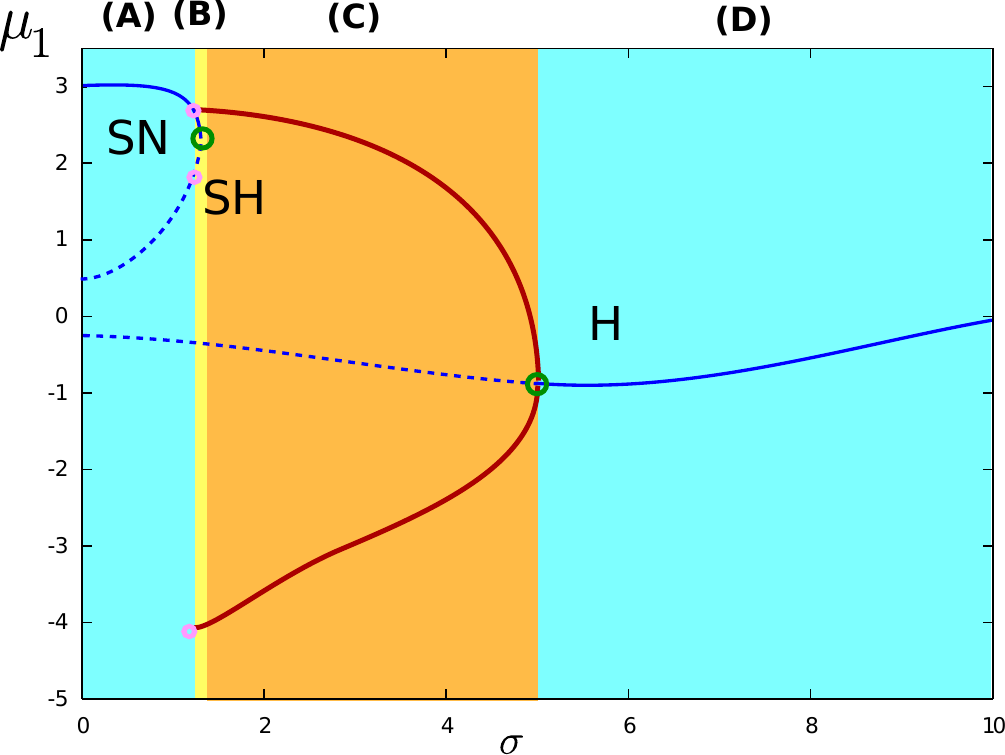}}
				\subfigure[$100$ trajectories for the network equations, $\Lambda=1.5$, $\tau=0.5$]{\includegraphics[width=.30\textwidth]{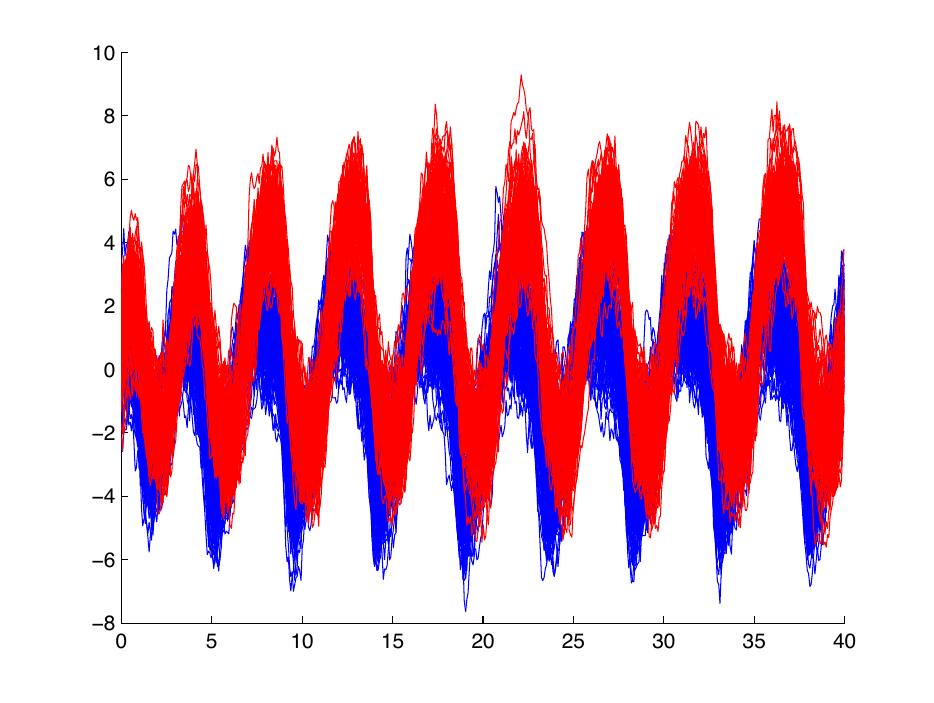}\label{fig:Behaves15}}
	\end{center}
	\caption{Dynamics and bifurcations of the spatially homogeneous equations for Network (II). (a): Codimension 2 bifurcation diagram as a function of the noise intensity $\Lambda$ and the delay $\tau$: saddle-node bifurcations (blue line) and cascade of Hopf bifurcation (pink curves) that all have a common vertical asymptote as shown in (b) for the three rightmost eigenvalues. Blue region: stationary states, orange: cycles. 
	(c): Codimension 2 bifurcation diagram as a function of $I_1$ and $\Lambda$ for $\tau=5$ (red line in diagram (a)): 2 degenerate bifurcations appear, corresponding to the tangential merging of two Hopf with the saddle-node bifurcation, and one Bogdanov-Taken bifurcation. 
	(d) Perturbed transient prior to reaching the stable oscillation or stationary state for $\tau=5$. 
	(e) Codimension 2 bifurcation diagram with respect to $\sigma$ and $\Lambda$: saddle-homoclinic (SH, green), saddle-node (SN, blue) and Hopf (H, brown) separating the bifurcation diagram into 4 zones: stationary (blue, A and D), a periodic (C, orange) and a bistable (B, yellow) zones. 
	Codimension 1 bifurcation diagram as a function of $\Lambda$ or $\sigma$ for fixed values of the other noise parameter are displayed in (f) and (g).
	(h) Synchronization of all neurons in the oscillatory region: blue (resp. red): $50$ trajectories from neurons of the excitatory (resp. inhibitory) population 1 (resp. 2)). The diagrams were obtained using DDE-BIFTOOL~\cite{DDEBif1,DDEBif2} and a specific code for the network equations.}
	\label{fig:BifDiagsDDE}
\end{figure}
 
We now analyze the dynamics of the spatially extended system and its dependence upon noise, initial datum and boundary conditions. We will distinguish between functional connectivity case where $s_1>s_2$ (inhibition more distal than excitation) and anatomical connectivity (excitation more distal than inhibition).

\paragraph{Functional Connectivity Case}
We analyze the dynamics of spatially distributed Network II in the functional connectivity case with $s_1=0.02$ and $s_2=0.0125$. 

The diagrams presented in Figure~\ref{fig:BifDiagsDDE} characterize the existence and the nature of the synchronized states. As soon as the initial condition is homogeneous, the system will present solutions that are constant in space, and their time profile is given by the solutions of the fully-synchronized system. However, these synchronized states might not be stable, and inhomogeneities in the initial condition might lead the system to different states. 
We numerically address this problem by computing the solutions of the neural-field moment equation for different values of the parameter $\Lambda$ or $\sigma$, and non-spatially homogeneous initial condition (Fig.~\ref{fig:turingPeriod1}).

\begin{figure}
	\begin{center}
		\includegraphics[width=.8\textwidth]{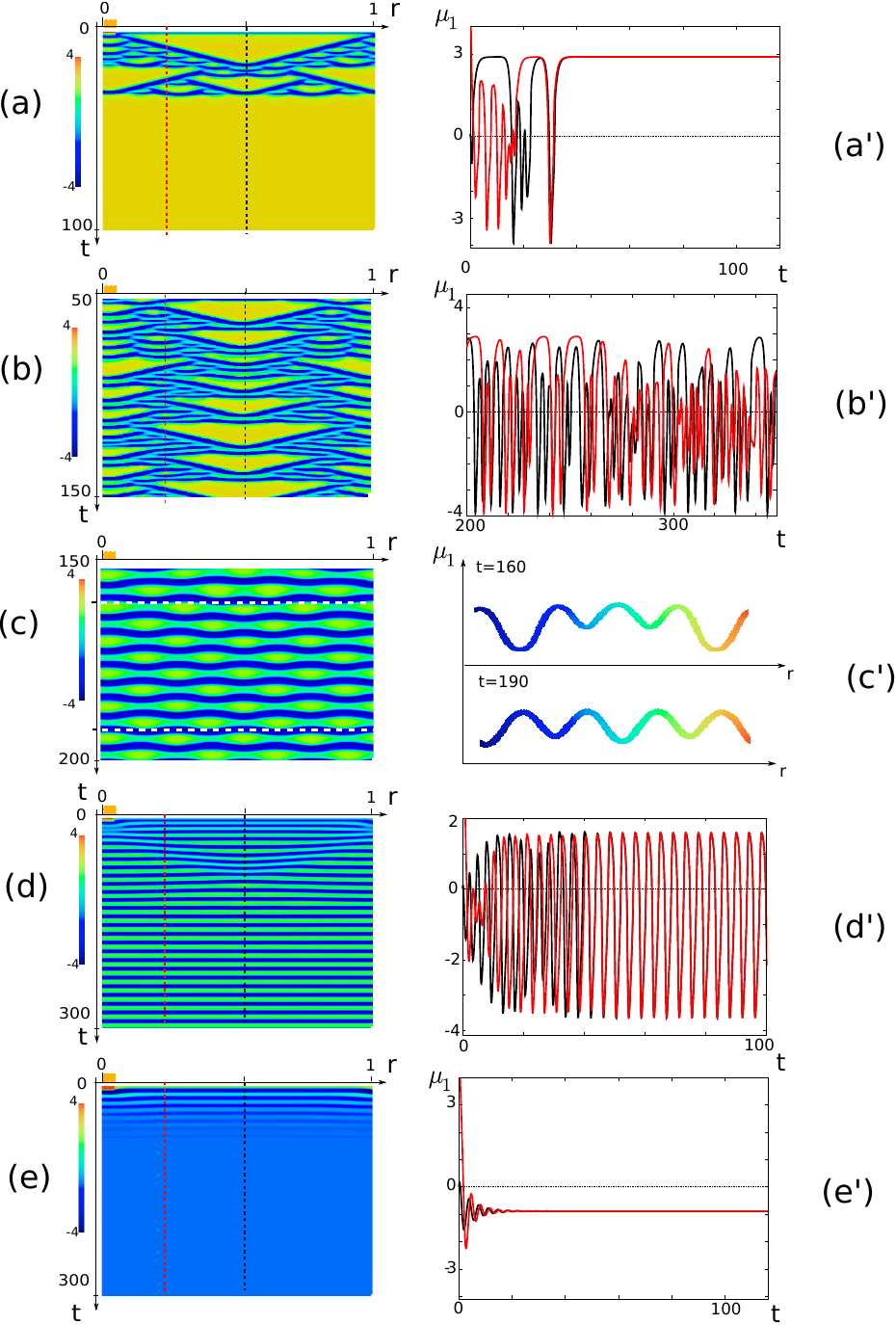}
	\end{center}
	\caption{Spatio-temporal activity (left) and sample trajectories (right) for different values of the noise parameter. (left): $\mu_1(r,t)$ as a function of $r$ (abscissa) and $t$ (ordinate). (right): (c') represents $r\mapsto \mu_1(r,t=160)$ and $r\mapsto \mu_1(r,t=190)$$\mu_1(r=0.1,t)$, other left panels represent $t\mapsto \mu_1(r=0.2,t)$ (black) and $t\mapsto \mu_1(r=0.5,t)$ (red), as shown in the left diagrams. Initial conditions are set to $\mu_1(r,0)=5$ for $r\in [0,0.05]$ and $0$ elsewhere (orange box) $\mu_2(r,0)\equiv 0$ and $v(r,0)\equiv 0$. (a): $\Lambda=0.1$, Lyapunov coefficient: $-0.997$. (b): $\Lambda=1$, Lyapunov coefficient: $0.20$. (c) $\Lambda=1.6$, Lyapunov: $0.08$. (d) $\Lambda=1.7$, Lyapunov:$-0.03$. (e) $\Lambda=3$, Lyapunov: $-0.20$. Animations of the activity are available in the supplementary material. Figures and animations were obtained using XPPAut~\cite{ermentrout:02}. }
	\label{fig:turingPeriod1}
\end{figure}
Spatially homogeneous solutions are stable in the noise regions (B) through (D). In the parameter region (A), a complex transition from stationary spatially homogeneous solutions to synchronized solutions involving chaotic patterns of activity is observed: for very small noise, the stable stationary spatially homogeneous solution appears attractive. The initial condition, strictly positive in $[0, 0.05]$ and zero otherwise, creates a bi-directional wave that travel through the neural field and splits into different secondary waves, themselves potentially splitting. All these waves interact together, and this phenomenon results in highly irregular transient behaviors (Fig.~\ref{fig:turingPeriod1} (a)). This chaotic regime becomes permanent for larger values of $\Lambda$ and the spatially homogenous solution is not recovered (Fig.~\ref{fig:turingPeriod1} (b)). As noise is further increased, the irregular wave-splitting pattern suddenly turns into a space-time quasi-periodic wave (Fig.~\ref{fig:turingPeriod1} (c)), i.e. more regular quasi-periodic spatial patterns oscillating quasi-periodically in time. These waves do not to interfere together, which explains the increased regularity observed in contrast with the patterns observed for smaller values of the noise parameter. These irregular waves progressively gain regularity as noise is further increased, and as soon as the noise parameter reaches values corresponding to the bistable parameter region (B), they turn into regular, spatially homogeneous solutions corresponding to the periodic orbit identified in the spatially homogeneous system. This stability of the spatially homogeneous state persists in the parameter regions (C) and (D) (Fig.~\ref{fig:turingPeriod1} (d) and (e)). The effect of varying of the noise parameter $\sigma$ are qualitatively the same (not shown). One difference is the synchronization of the oscillations that appears sharper due to the fact that the variance oscillates and reaches very small values. 
This phenomenon also persists when considering different boundary conditions on the neural field, as shown in ~\ref{append:BoundaryConditions}. 

\paragraph{Anatomical Connectivities: Bumps, bump-splitting and wave interference}
The anatomical case, where excitatory connections are more distal than inhibitory connections, shows clear qualitatively distinctions. Incidentally, because of the rescaling of the connectivity kernels, the bifurcation diagram of the spatially homogeneous system is the same as in the functional case (Fig.~\ref{fig:Codim1Anatomical}) and hence such neural fields show the same transition between spatially homogeneous stationary and periodic solutions as a function of noise levels. Moreover, similarly to the functional case, it appears that chaotic instabilities occur in the low noise regime (A) corresponding to the transition to synchronized oscillations. However, the nature of the transition appears to significantly depend on the kernel extension ratio $r=s_1/s_2$ (see Figure~\ref{fig:Anatom1}): for large ratios (typically larger than $0.6$ in our system), wave splitting persists as in the functional case (Fig.~\ref{fig:Anatom1}(l)). 
\begin{figure}
	\begin{center}
		\subfigure[Spatially Homogeneous]{\includegraphics[width=.3\textwidth]{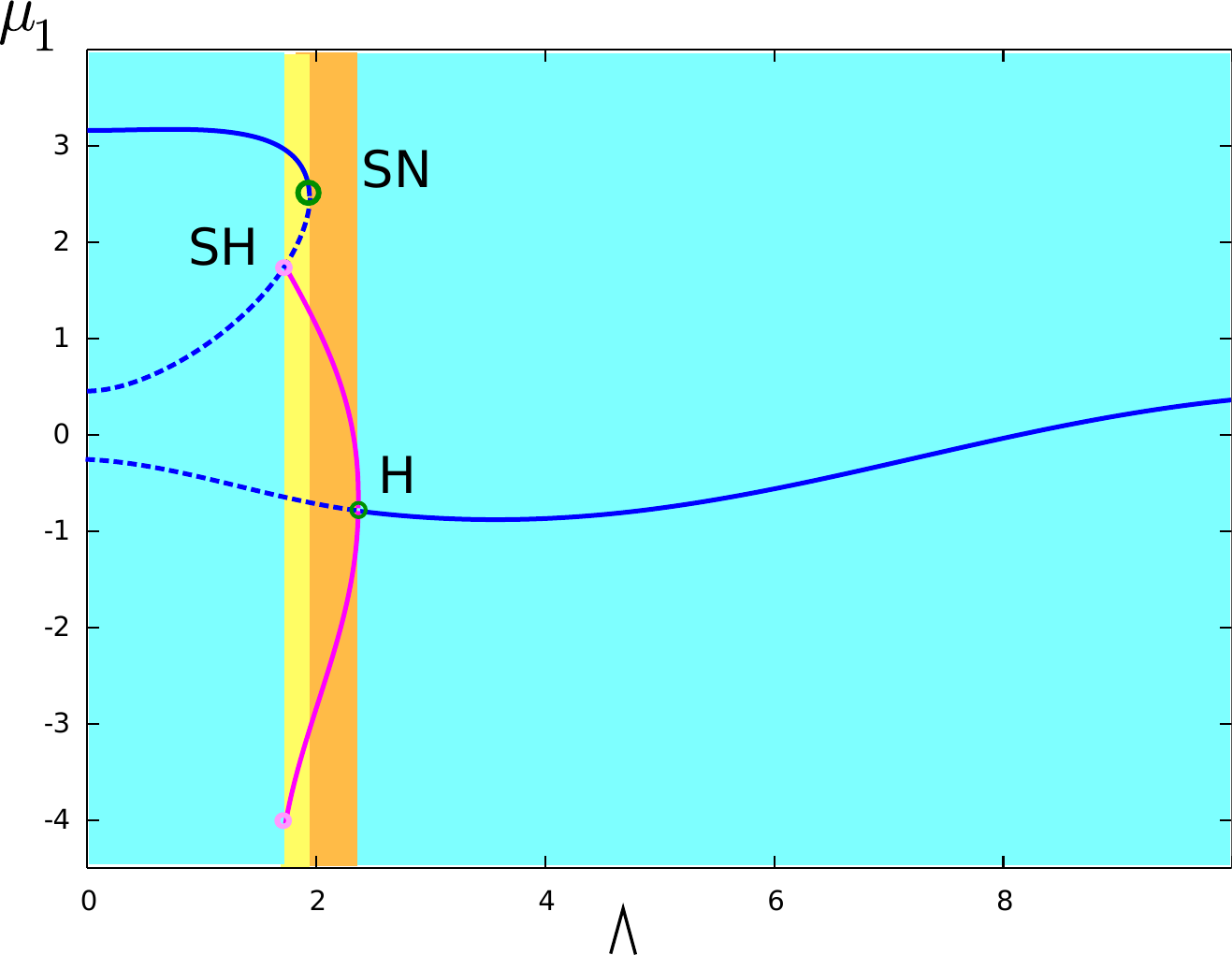}\label{fig:Codim1Anatomical}}
		\subfigure[$\Lambda=0.1$]{\includegraphics[width=.3\textwidth]{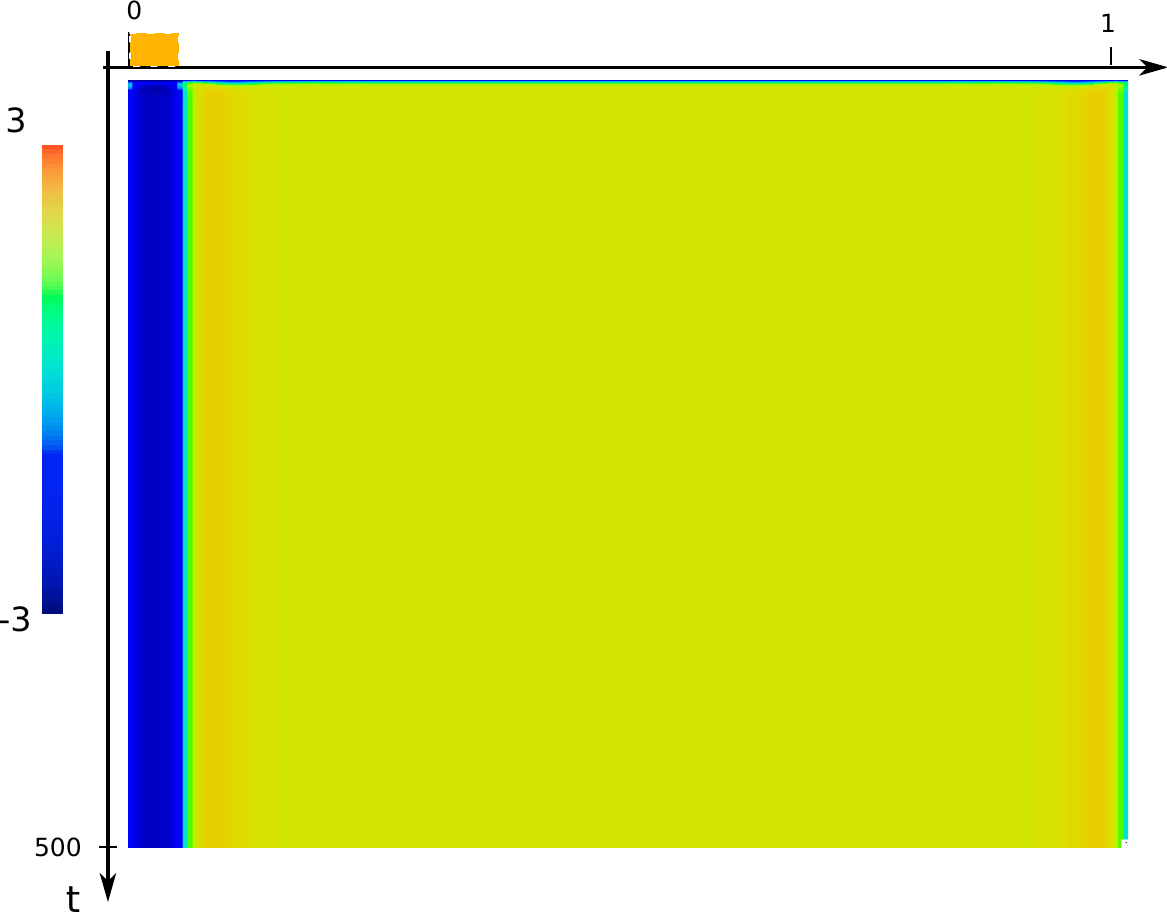}}
		\subfigure[$\Lambda=0.5$]{\includegraphics[width=.3\textwidth]{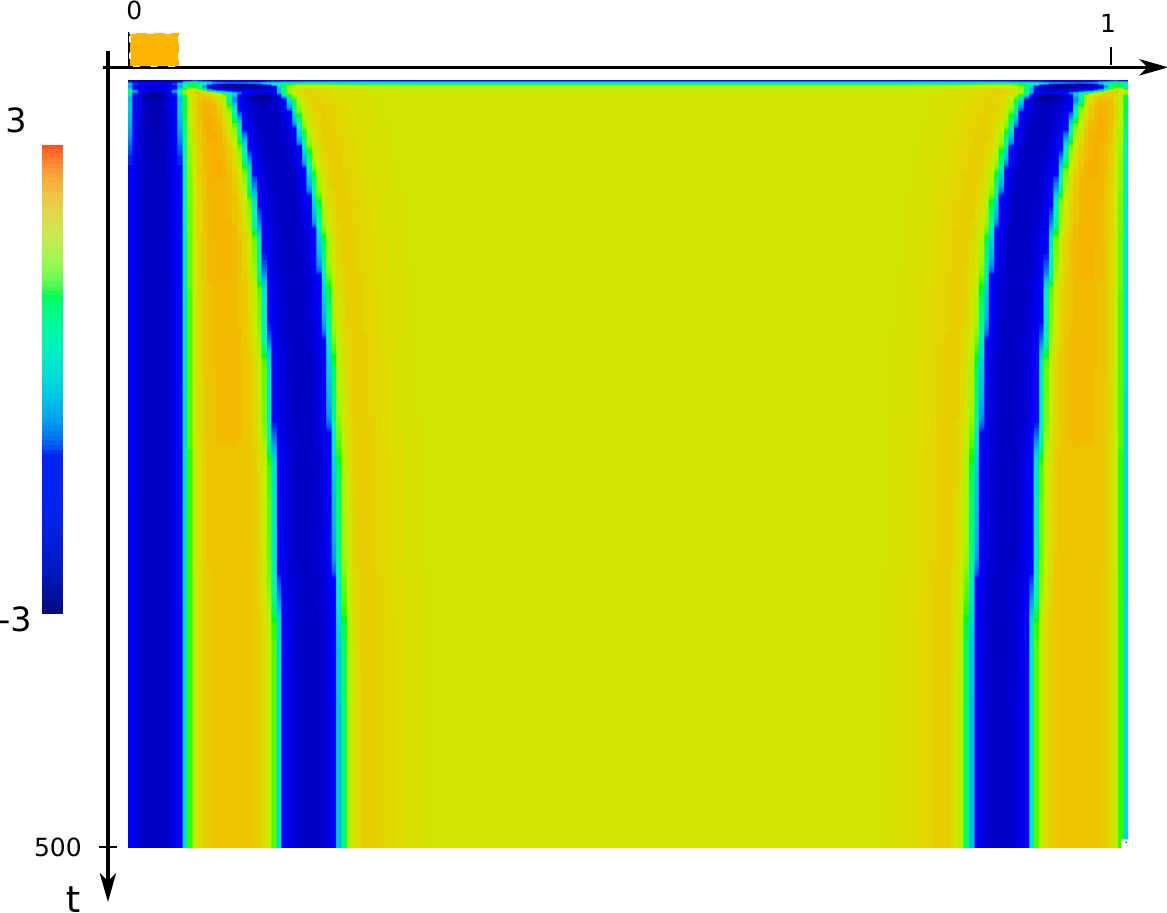}}\\
		\subfigure[$\Lambda=0.7$]{\includegraphics[width=.3\textwidth]{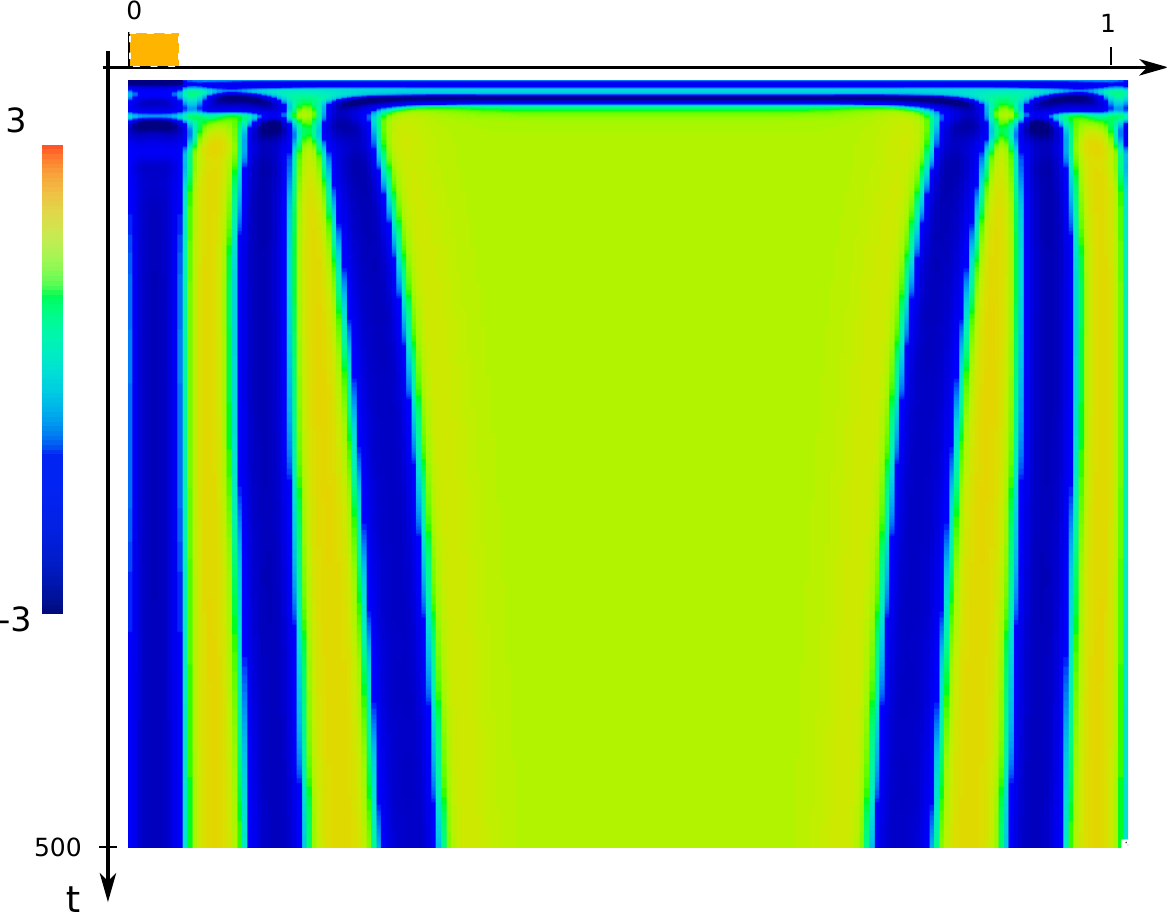}}
		\subfigure[$\Lambda=0.8$]{\includegraphics[width=.3\textwidth]{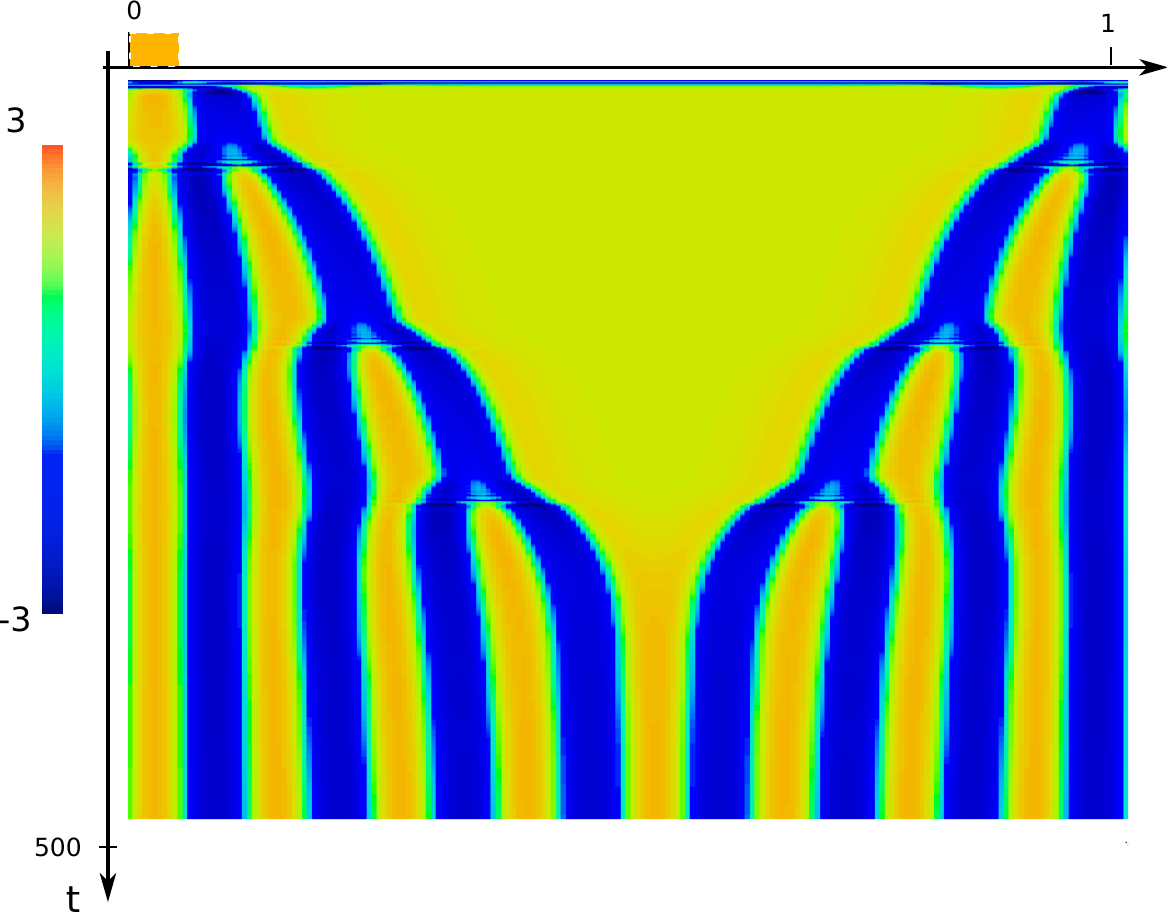}\label{fig:8Bumps}}
		\subfigure[$\Lambda=0.8$,IC2]{\includegraphics[width=.3\textwidth]{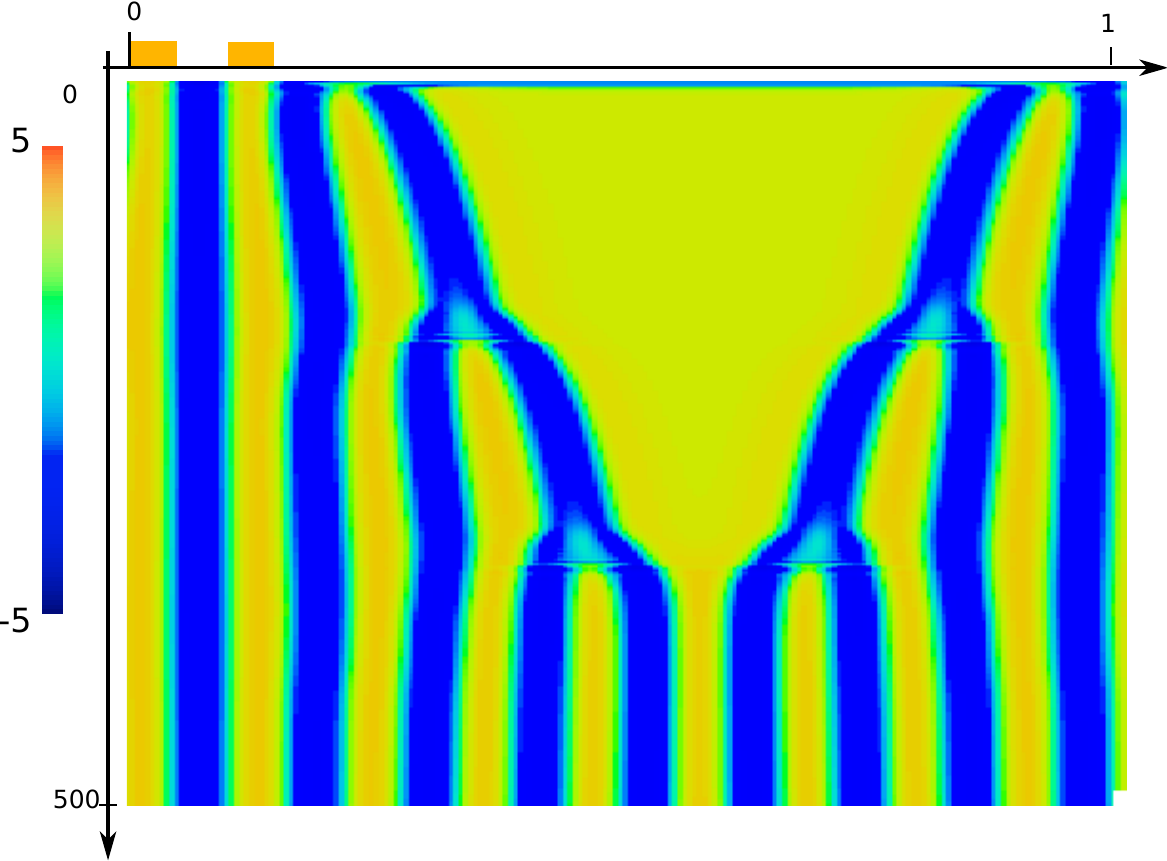}\label{fig:9Bumps}}\\
		\subfigure[$\Lambda=1$]{\includegraphics[width=.3\textwidth]{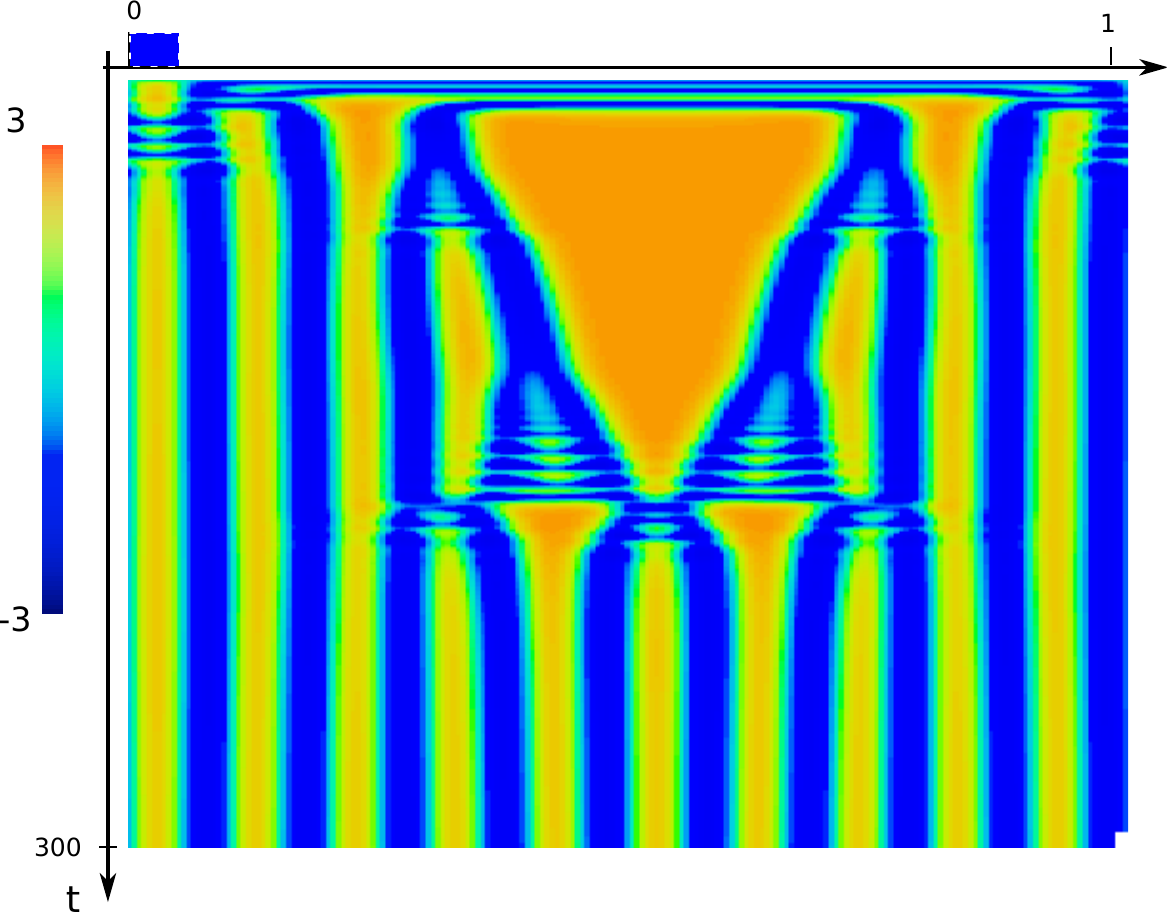}}
		\subfigure[$\Lambda=1.4$]{\includegraphics[width=.3\textwidth]{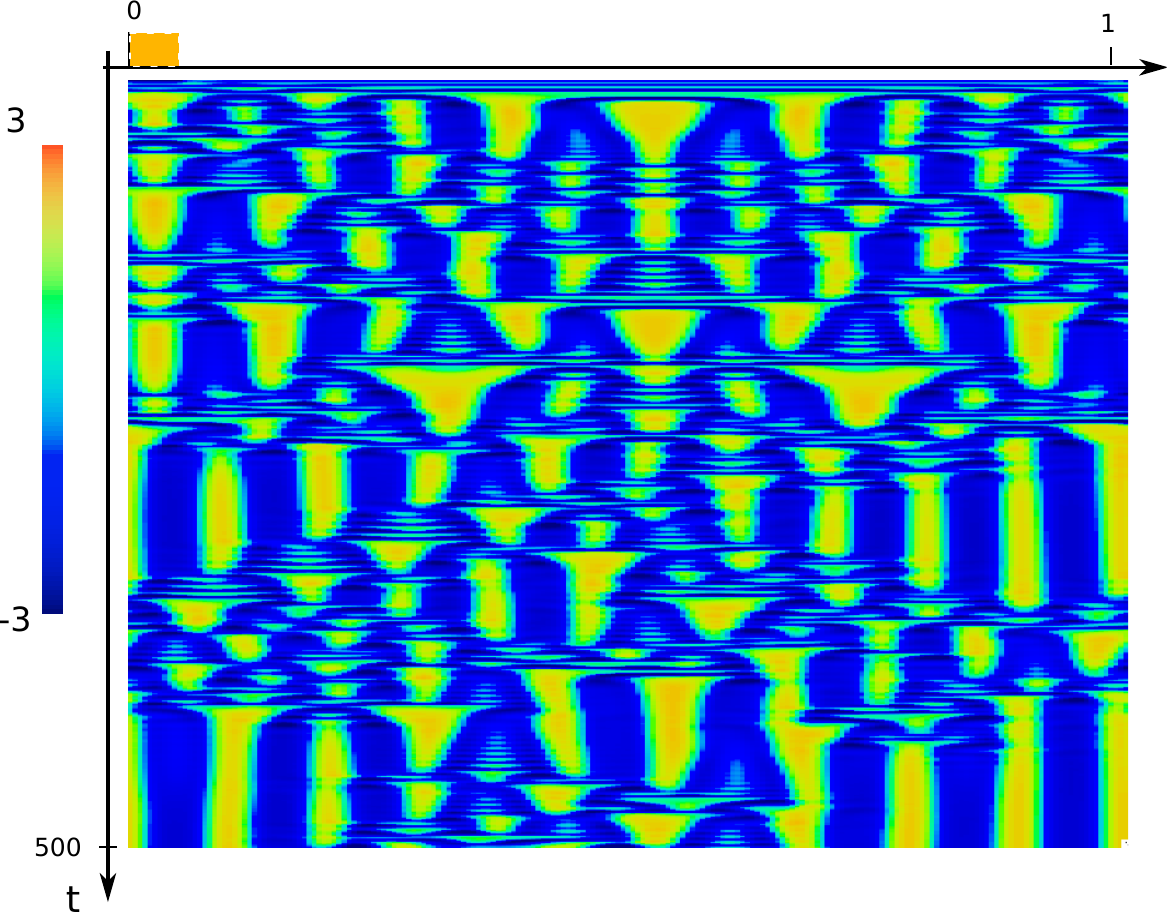}}
		\subfigure[$\Lambda=1.6$]{\includegraphics[width=.3\textwidth]{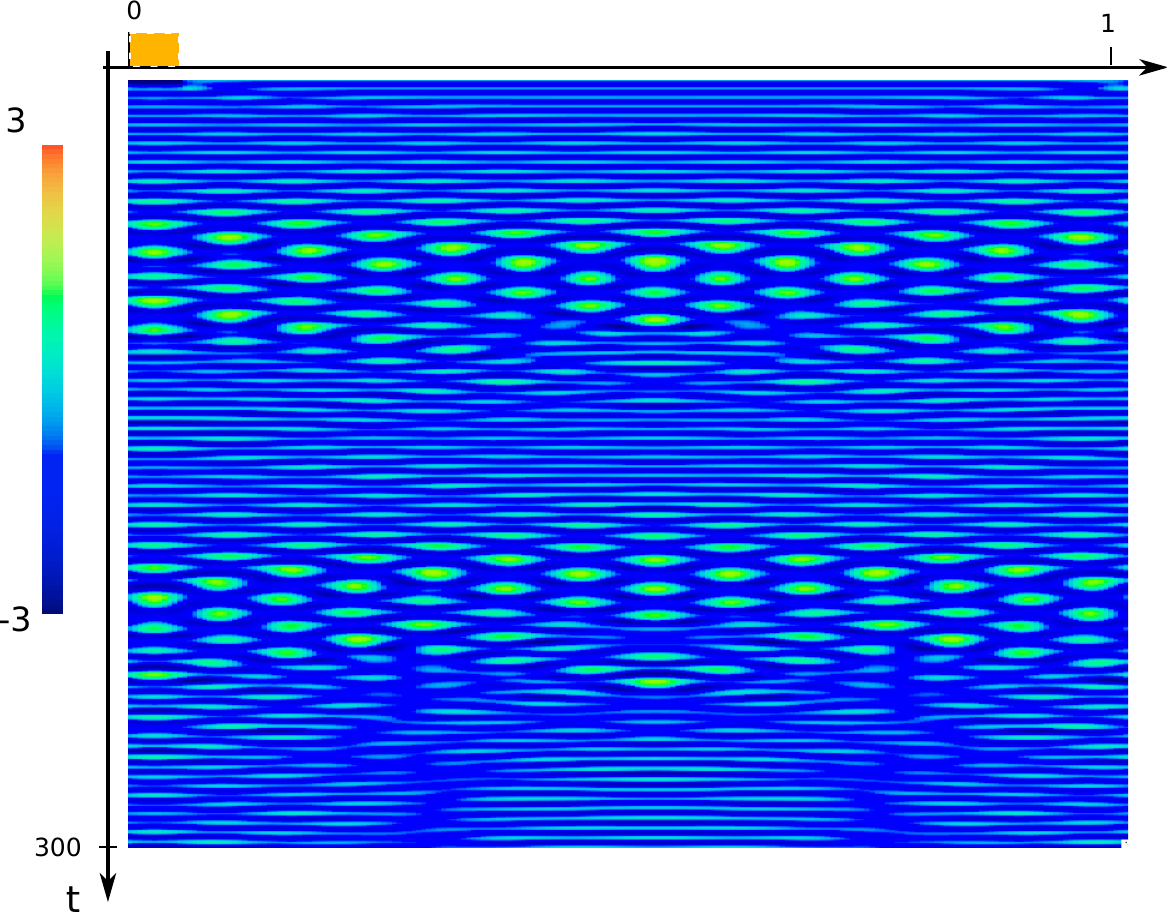}\label{fig:SpaceTimeWaveAna}}\\
		\subfigure[$\Lambda=1.7$]{\includegraphics[width=.3\textwidth]{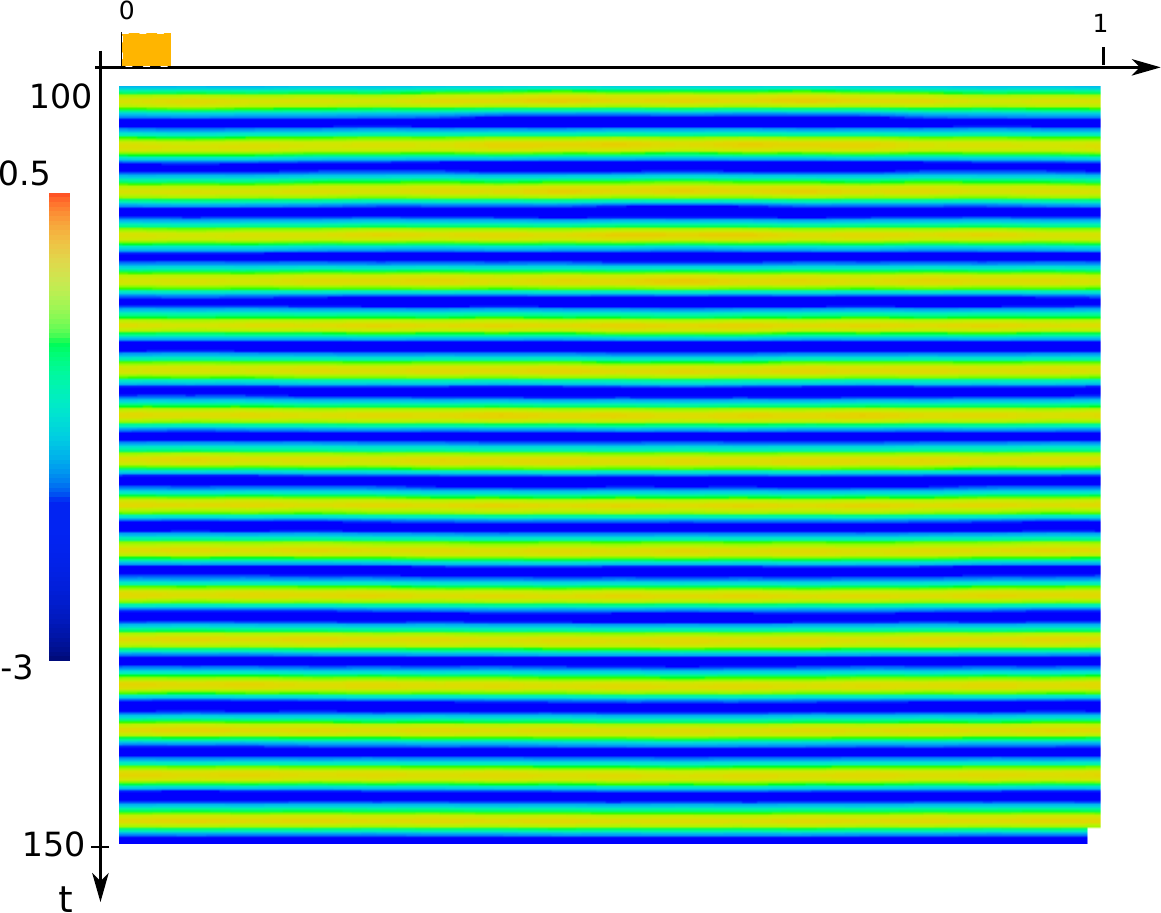}}
		\subfigure[$\Lambda=2.5$]{\includegraphics[width=.3\textwidth]{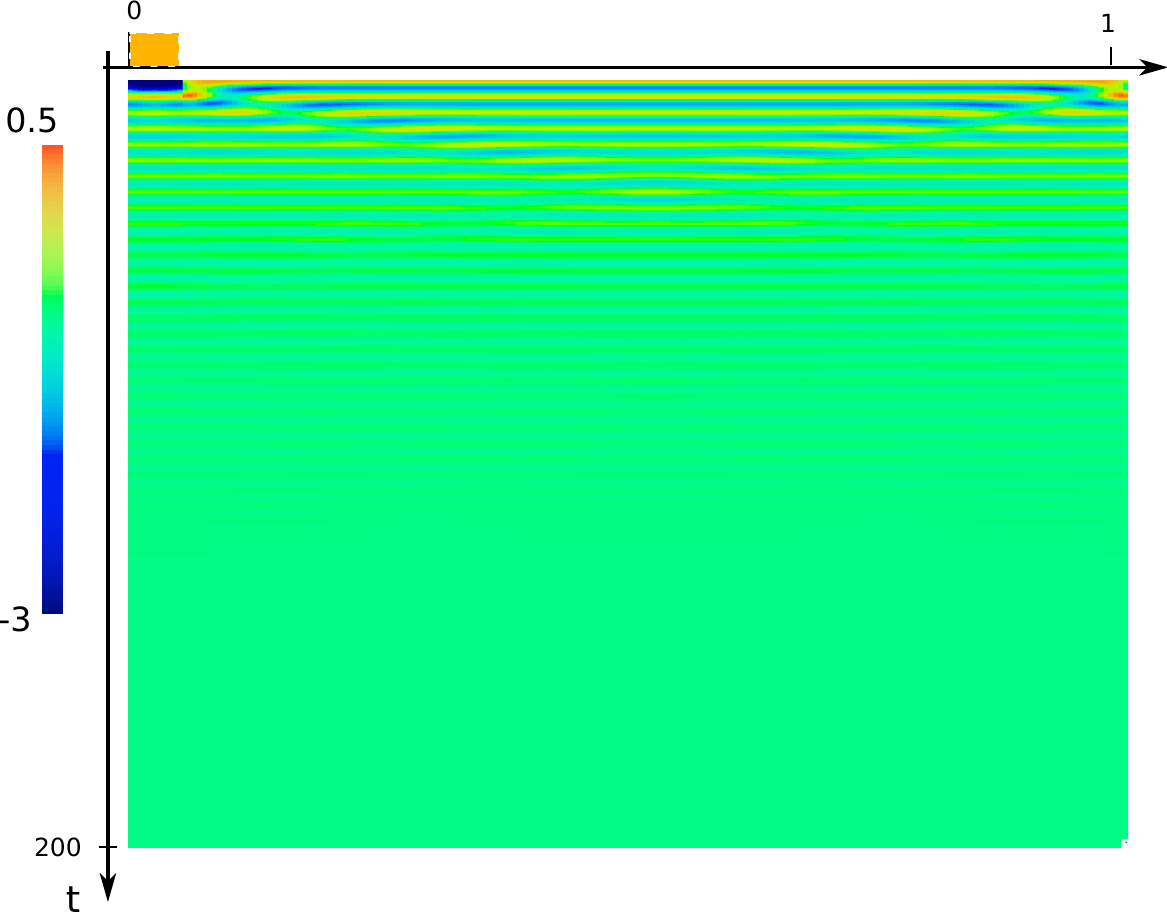}}
		\subfigure[$\Lambda=1.3$, $\frac{s_1}{s_2}>r^*$]{\includegraphics[width=.3\textwidth]{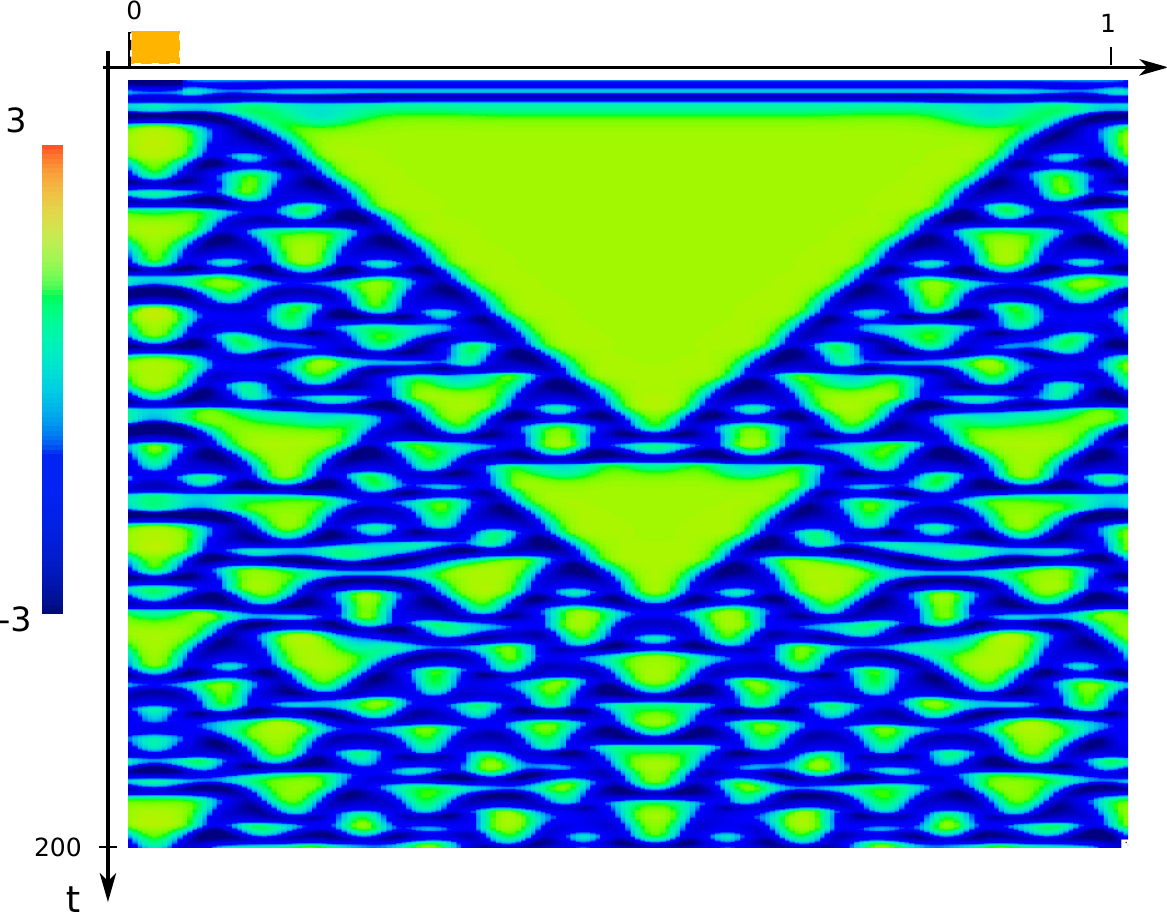}\label{fig:WaveSplit}}
	\end{center}
	\caption{Anatomical connectivity case with $\mu_1(r,t)$ represented as a function of $r\in\mathbbm{S}^1$ (abscissa) and $t$ (ordinate). (a)-(k): $s_1/s_2=0.5$ show a sequence of bump splitting as $\Lambda$ is increased in the parameter region where stationary spatially homogeneous solutions related to small values of $\Lambda$. Orange Box: initial condition $\mu_1(r,0)=5$ for $r\in[0,0.05]$ and $0$ otherwise, and blue box: $-\mu_1(r,0)$. (l): $s_1/s_2=0.65$: wave splitting phenomenon, for $\Lambda=1.3$.}
	\label{fig:Anatom1}
\end{figure}
For smaller ratios, a new type of dynamics appears in the parameter region (A) characterized by the presence localized stationary patterns of activity (bumps). The number and stability of these bumps depend on the level of noise: as noise is increased, bumps tend to split in two different bumps of the same spatial size and either stabilize, or split again. This phenomenon is strongly evocative of the patterns observed in a different context by Coombes and Owen in~\cite{coombes-owen:07}. This sequence of bump splitting either stabilizes on a stationary pattern composed of several bumps (Figs.~\ref{fig:Anatom1}(b)-(g)), or repeat indefinitely and irregularly for higher noise levels (Fig.~\ref{fig:Anatom1}(h)). It is interesting to note that we observed that for $\Lambda>1.5$, the stationary pattern found is spatially periodic, characterized by a specific wavenumber increasing as noise is increased, and depending on the type of initial condition chosen. For instance, we show in Figure~\ref{fig:8Bumps} the case of $\Lambda=1.5$ and initial condition zero except on $[0,0.05]$ where it is equal to $5$ (IC1), where the stationary behavior is characterized by a spatially periodic pattern with wavenumber $8$, and for initial condition zero except on $[0,0.05]\cup [0.1,0.15]$ where it is equal to $5$, the same kind of phenomenon appears and stabilizes on a pattern with wavenumber equal to $9$. For $\Lambda=1.6$ and initial conditions IC1, the wave number is $10$. As noise is further increased, the system starts presenting quasi-periodic spatio-temporal waves (Fig.~\ref{fig:SpaceTimeWaveAna}) similarly to the functional connectivity case. 

These bumps disappear in favor of a spatially homogeneous periodic activity when noise levels reach the bistable region, and this region turns into spatially homogeneous stationary solutions as noise is further increased, in the case corresponding to regions (B), (C) and (D).

\section{Discussion}\label{sec:Discussion}
In this article, we initiated a study of neural fields in the presence of noise, based on a microscopic model accounting for neuron's dynamics. Limits of such networks in the presence of noise, analyzed in~\cite{touboulNeuralfields:11}, show a complex interplay between noise and the dynamics, encapsulated in intricate equations on the space of stochastic processes, that appear mostly impenetrable. In this article, we provided a sufficient condition for spatially homogeneous solutions to exist in such equations. In order to precisely analyze the dynamics of these equations, we applied the formalism to networks of firing-rate neurons. In that case, solutions converge towards Gaussian processes whose mean and standard deviation satisfy a closed deterministic system of coupled integro-differential equations. Incidentally, this new set of equations is compatible, in the zero noise limit, with the usual Wilson and Cowan equations widely used in the study of neural fields. This suggests that in our modeling bridging microscopic and macroscopic states, the Wilson and Cowan system actually reflects the behavior of \emph{non-noisy} neurons rather than corresponding to averaging effects. In the stochastic model, the variance of the process nonlinearly impacts the evolution of the mean activity. 

The compatibility of the usual Wilson and Cowan system with the new set of equations directly allows identifying the effects of noise on the neural fields dynamics. An important conclusion of the present article is the significant effect of the microscopic noise levels on the macroscopic dynamics. Analyzing the bifurcations of the newly derived set of equations as a function of noise levels evidenced several non trivial qualitative effects on the dynamics. A relatively universal phenomenon observed in all the models analyzed was the stabilization by noise: we identified in all the cases treated stationary solutions, unstable in the low-noise regime, that gain stability in a high noise regime, while other solutions lose stability and disappear. This property quantify the heuristic principle that when noise exceed certain levels, it tends to dominate the dynamics and destroy fine structures of the underlying dynamical system. This is further illustrated that in all the cases treated, the stabilized stationary solution has an average close to a median value of the fixed points and periodic orbits observed for smaller noise. More surprising was the fact that noise lead to the emergence of a structure activity, for instance solutions periodic in law, in a case where the non-noisy system did not display any cycle. This phenomenon can be related to random asymmetric transitions between attractors, or to random crossings of separatrices, that become regular when the population size increase. For instance in the two-populations firing-rate model (Network II), the dynamical system related to one neuron in each population features a stable fixed point, an unstable fixed point and an hyperbolic fixed point, the stable manifold of which separates trajectories directly returning to the fixed point from trajectories making large excursions around an heteroclinic cycle. As noise is increased, the occurrence of these large excursions become increasingly frequent, accounting for the possibility of macroscopic oscillations. Let us emphasize that this route to synchronization and oscillations is distinct from phenomena documented in the neural networks literatures, such as coherence or stochastic resonance~\cite{pham-pakdaman:98,nesse-bressloff:08,lindner:04}. Indeed, beyond the absence of periodic forcing present in the stochastic resonance, the perfectly periodic behavior clearly distinguishes the present purely collective phenomenon from these more usual phenomena. 
An important point of this analysis is that low-pass filtering macroscopic signals does not cancel out the noise present at the microscopic scale and that essential qualitative features of the macroscopic signal are observed.

Another intriguing effect is the nature of the transition from stationary to periodic behaviors in the spatially extended system. We indeed showed that stationary spatially homogeneous states might lose stability in favor of irregular spatio-temporal behaviors as a function of noise levels, yielding highly irregular spatio-temporal patterns. These patterns significantly depend on the relative extension of the connectivity kernels related to the excitatory and the inhibitory populations: functional connectivity cases with excitation more local than inhibition showed a wave-splitting phenomenon whereas anatomical connectivity cases corresponding to more distal excitation were characterized by a sequence of bump splitting. These phenomena might be related to the presence of several attractors and to the phenomenon of chaotic switching. This formation of complex spatio-temporal patterns arising from a wave-splitting phenomenon strongly evokes Turing patterns as found in different reaction diffusion equations in biological mathematics. In particular, these patterns are similar to those exhibited by~\cite{meinhardt-klinger:87} and obtained from the analysis of the dynamics of reaction diffusion equations related to pattern formation on the shells of mollusks, in a case where the system induces the formation of forward and backward running interacting waves. These are also close from the results of Coombes and Owen in~\cite{coombes-owen:07}, where the authors identify self-replicating bumps, compared with dynamics observed dissipative equations such as Ginzburg-Landau's, but to our knowledge not observed in neural field equations.
 
Another interest of the present approach is the fact that one can directly infer the microscopic stochastic behavior of each cell in the mean-field limit. Indeed, the uniform propagation of chaos property demonstrated in the present manuscript ensures that any neuron in the network converges after a transient phase towards an independent Gaussian process with mean and standard deviation solutions of the dynamical system exhibited here. In particular, when oscillations take place, all neurons in the network present phase-locked oscillations, the phase being set by the choice of the initial condition. 

These different phenomena all point towards the same conclusion that noise has a significant role on shaping the activity of large network, and shed new light on currently widely debated question of the functional role of noise in the brain. Our analysis directly relates noise to the presence of synchronized oscillations. This sharp and sudden synchronization of neurons on relatively large, slow periodic orbits evokes the apparition of seizures. And incidentally, it was observed experimentally that increased variability of the post-synaptic currents (that would correspond in our model to an increased synaptic noise coefficient $\sigma$) was related to the occurrence of epileptic seizures (see~\cite{aradi-soltesz:02}). This suggests an important direction in the development of this work, consisting in fitting the microscopic model to biological measurements. This would yield a new neural assembly model for large scale areas and develop studies on the appearance of stochastic seizures and rhythmic activity in relationship with different parameters of the model, integrating the presence of noise in a mathematically and biologically relevant manner. This would also allow deriving quantitative predictions on the noise-induced transitions, that would then potentially be experimentally tested either to evaluate the level of noise in a cortical network, or to control behaviors through noise levels. This seems plausible nowadays since experimental techniques to control noise levels are now well understood: for instance, direct injection of  stochastic conductances is also be performed through dynamics clamp (see e.g.~\cite{destexhe-contreras:06}), and several techniques controlling channel noise levels are reviewed in~\cite{diba-koch:04}. Besides, the present approach allowed accounting for several collective phenomena: reliable response (in that case in probability distribution), decorrelation~\cite{ecker-berens-etal:10} and synchronized oscillations. 

The first limitation of the present study is the fact that this precise analysis is only valid in the case of firing-rate neurons, where we were able to exhibit Gaussian local equilibria. Though popular in the study of neural fields and widely used, the model does not take into account the highly nonlinear nature of several neuronal phenomena. These linear models, though less general and accurate than the nonlinear ones, yielded much greater insight, and in particular analytic treatment, of the cortical dynamics within their regimes of validity. The Gaussian nature of the solution is hence a breach we exploited to go further in analyzing the complex dynamics of the neural fields mean-field equations, with the aim of further understanding nonlinear neuron models. The analysis provided here is the first (and to our understanding, one of the only case where such an analytical study is possible) to address precisely the dynamics of such complex mean-field equations, and can also be seen as a proof of concept of the dynamics of this class of equations, in particular the effects of noise and delays in these equations. A perspective of great interest also would be to derive from the non-linear mean-field equations systems governing macroscopic variables such as the mean firing-rate. This is a complex and deep question we are currently investigating.

\appendix
\section{Existence and Uniqueness of solutions of the synchronized mean-field solution}\label{append:ExistUniqueSynchro}

In this appendix we provide the proof of Proposition~\ref{pro:Synchro}, which we recall here for completeness.

\begin{proposition}
	Assume that the distribution of the initial condition $\zeta^0_t(r)$ is chaotic and independent of $r$, and that the functions $G(r,t,x)$ and $g(r,t,x)$ do not depend on $r$. Moreover, if the law of the quantities:
	\begin{equation*}
		\begin{cases}
			B(r,x,\varphi)\eqdef\int_{\Gamma}\int_{-\tau}^0  b(r,r',x,\varphi(u)) d\eta(r,r',u)\lambda(r')dr'\quad \text{and}\\ 		
			H(r,x,\varphi)\eqdef\int_{\Gamma}\int_{-\tau}^0 \int_0^t \beta(r,r',\psi(s),\varphi(s+u))  d\mu(r,r',u)\lambda(r')dr'dB_s
		\end{cases}
	\end{equation*}
	do not depend on $r$ for any $(\psi,\varphi)$ measurable functions, then the solution of the mean-field equation~\eqref{eq:MFESpaceSummary} is spatially homogeneous in law. The common spatially homogeneous law is solution of the implicit equation:
	\begin{multline*}
		\displaystyle{X_t (r_0)= \zeta^0_0(r_0) + \int_0^t ds \Big( G(r_0,s,X_s(r_0)) + \Exp_{Z} [B(r_0,X_s(r_0),Z_{(\cdot)}(r_0))]\Big)}\\
	 \displaystyle{+\Exp_{Z} [ H(r_0,X_{(\cdot)}(r_0), Z_{(\cdot)}(r_0) )] + \int_0^t dW_s g(r_0,s,X_s(r_0)),}
	\end{multline*}
	which has a unique solution. 
\end{proposition}

\begin{proof}
The existence and uniqueness property of solutions proved in~\cite{touboulNeuralfields:11} is based on a classical contraction argument on the map $\Phi$ acting on stochastic processes:
\begin{align*}
	\Phi(X)_t(r) &= \begin{cases}
		\zeta^0_0(r) + \int_0^t ds \Big( G(r,s,X_s(r)) + \int_{\Gamma} \lambda(r')\,dr' \int_{-\tau}^0 d\eta(r,r',u) \Exp_{Z} [ b(r,r',X_s(r), Z_{s+u} (r') )] \Big) \\
				\qquad \quad+ \int_{\Gamma} \lambda(r')\,dr' \int_{-\tau}^0 d\mu(r,r',u) \int_0^t \Exp_{Z} [ \beta(r,r',X_s(r), Z_{s+u} (r') )] dB_s \\
				\qquad \quad+ \int_0^t dW_s g(r,s,X_s(r)) \qquad,\qquad  t>0\\
				 \zeta^0_t(r) \qquad ,\qquad t\in [-\tau, 0]\\
				 (Z_t)\eqlaw (X_t)  \text{ independent of $(X_t)$, $(W_t)$ and $(B_t)$}
	\end{cases}
\end{align*}
Routine fixed-point methods allows demonstrating that the unique solution of the mean-field equations is the limit of the recursion $X^{n+1}_t(r)=\Phi(X^n_t(r))$ starting from any square integrable initial process $X^0_t(r)$. Let $X^0_t(r)$ a stochastic process whose law does not depend on $r$: $X^0_t(r)\eqlaw X^0_t(r_0)$ for any $(r,r_0)\in\Gamma$. Then $\Phi(X^0)_t(r)$ does not depend on $r$, since we have\footnote{For simplicity we denoted the variable over which the integration takes place just after the integral sign.}:
\begin{align*}
	\Phi(X^0)_t(r) &= \displaystyle{\zeta^0_0(r) + \int_0^t ds \Big( G(r,s,X_s^0(r)) + \int_{\Gamma} \lambda(r')\,dr' \int_{-\tau}^0 d\eta(r,r',u) \Exp_{Z^0} [ b(r,r',X_s^0(r), Z^0_{s+u} (r') )] \Big)} \\
				&\quad \displaystyle{+ \int_{\Gamma} \lambda(r')\,dr' \int_{-\tau}^0 d\mu(r,r',u) \int_0^t \Exp_{Z^0} [ \beta(r,r',X^0_s(r), Z^0_{s+u} (r') )] dB_s + \int_0^t dW_s g(r,s,X_s^0(r))} 
				\end{align*}
				\begin{align*}
				&\displaystyle{\eqlaw \zeta^0_0(r_0) + \int_0^t ds \Big( G(r_0,s,X_s^0(r_0)) + \int_{\Gamma} \lambda(r')\,dr' \int_{-\tau}^0 d\eta(r,r',u) \Exp_{Z^0} [ b(r,r',X_s^0(r_0), Z_{s+u}^0(r_0))] \Big)} \\
				& \displaystyle{+ \int_{\Gamma} \lambda(r')\,dr' \int_{-\tau}^0 d\mu(r,r',u) \int_0^t \Exp_{Z^0} [ \beta(r,r',X_s^0(r_0), Z_{s+u}^0(r_0) )] dB_s + \int_0^t dW_s g(r_0,s,X_s^0(r_0))}\\
				&=  \displaystyle{\zeta^0_0(r_0) + \int_0^t ds \Big( G(r_0,s,X_s^0(r_0)) + \Exp_{Z^0} [B(r,X_s^0(r_0),Z_{(\cdot)}^0(r_0))]\Big)}\\
				&\qquad \displaystyle{+\Exp_{Z^0} [ H(r,X_{(\cdot)}^0(r_0), Z^0_{(\cdot)}(r_0) )] + \int_0^t dW_s g(r_0,s,X_s^0(r_0))} \\
				&\eqlaw  \displaystyle{\zeta^0_0(r_0) + \int_0^t ds \Big( G(r_0,s,X_s^0(r_0)) + \Exp_{Z} [B(r_0,X_s^0(r_0),Z_{(\cdot)}^0(r_0))]\Big)}\\
				&\qquad \displaystyle{+\Exp_{Z^0} [ H(r_0,X_{(\cdot)}^0(r_0), Z^0_{(\cdot)}(r_0) )] + \int_0^t dW_s g(r_0,s,X^0_s(r_0))} \\
				&=\Phi(X^0)_t(r_0)
\end{align*}
All the processes $X^n_t(r)$ hence have a law independent of $r$ by an immediate recursion, and so does the limit. We hence proved that the unique solution of the mean-field equations is spatially homogeneous in law, and obviously satisfies equation~\eqref{eq:SynchronizedLaw}.

The proof of existence and uniqueness of solutions for equation~\eqref{eq:SynchronizedLaw} uses also the classical fixed-point argument. Since the quantities $B(r,x,\varphi)$ and $H(r,\psi,\varphi)$ do not depend on $r$, we drop the dependence of these functions in $r$. As usually done, we transform the equation~\eqref{eq:SynchronizedLaw} into a fixed point equation on the space of stochastic processes. To this end, let us define the map $\Psi$ as follow:
\begin{align*}
	\Psi(X)_t &= \begin{cases}
		\displaystyle{\zeta^0_0 + \int_0^t ds \Big( G(s,X_s) + \Exp_{Z} [B(X_s,Z_{(\cdot)})] \Big)}  \displaystyle{+ \Exp_{Z} [ H(X_{(\cdot)}, Z_{(\cdot)} )] + \int_0^t g(s,X_s) dW_s},\;  t>0\\
				 \zeta^0_t(r) \qquad ,\qquad t\in [-\tau, 0]\\
				 (Z_t)\eqlaw (X_t)  \text{ independent of $(X_t)$, $(W_t)$ and $(B_t)$}
	\end{cases}
\end{align*}
The solutions of equation~\eqref{eq:SynchronizedLaw} are exactly the fixed points of $\Psi$. We assume here that $G$ and $g$ are $K$-Lipschitz-continuous and satisfy the linear growth condition, and $b$ and $\beta$ are $L$-Lipschitz continuous in both their variables. It is easy to show that any possible solution has a bounded second moment following~\cite{touboulNeuralfields:11}.

	\noindent {\it Existence:}\\
	Let $X^0\in \M^2(\C)$ the space of square integrable stochastic processes such that $X^0\vert_{[-\tau, 0]} \eqlaw \zeta_0$ a given stochastic process. We introduce the sequence of probability distributions $(X^k)_{k \geq 0}$ defined by induction as $X^{k+1}=(\Psi(X^k))$. We denote by $(Z^k)$ a sequence of processes independent of the collection of processes $(X^k)$ and having the same law. We analyze $X^{k+1}_t-X^k_t$ and decompose it into the sum of six elementary terms as follows:
	\begin{align*}
		X^{k+1}_t-X^k_t &=\displaystyle{\int_0^t \Big ( G(s,X^{k}_s) - G(s,X^{k-1}_s)\Big)\,ds} \\
		& \quad\quad \displaystyle{+ \int_0^t \Exp_Z\Big[ B(X^{k}_s,Z^{k}_{\cdot}) - B(X^{k-1}_s, Z^{k}_{\cdot}) \Big] \, ds} \\ 
		& \quad\quad \displaystyle{+ \int_0^t \Exp_Z\Big[ B(X^{k-1}_s,Z^{k}_{\cdot}) - B(X^{k-1}_s, Z^{k-1}_{\cdot}) \Big] \, ds} 
		\end{align*}
		\begin{align*}
		& \quad\quad \displaystyle{+\int_0^t \Big ( g(s,X^{k}_s)-g(s,X^{k-1}_s) \Big) \, dW_s}\\
		& \quad\quad \displaystyle{+ \Exp_Z\Big [ H(X^{k}_{\cdot}, Z^{k}_{\cdot}) - H(X^{k-1}_{\cdot}, Z^{k}_{\cdot}) \Big]} \\ 
		& \quad\quad \displaystyle{+ \Exp_Z\Big [ H(X^{k-1}_{\cdot}, Z^{k}_{\cdot}) - H(X^{k-1}_{\cdot}, Z^{k-1}_{\cdot}) \Big]} \\
		& \stackrel{\text{def}}{=} A_t + \tilde{B}_t + C_t + D_t + E_t + F_t
	\end{align*}
	where we simply identify each of the six terms $A_t$, $\tilde{B}_t$, $C_t$, $D_t$, $E_t$ and $F_t$ with the corresponding expression in the previous formulation. By a simple convexity inequality (H\"older) we have:
	\[\vert X^{k+1}_t-X^k_t \vert^2 \leq 6 \Big( \vert A_t\vert^2 + \vert \tilde{B}_t\vert^2 + \vert C_t\vert^2+ \vert D_t\vert^2+\vert E_t\vert^2+\vert F_t\vert^2\Big)\]
	and treat each term separately. 

	The term $A_t$ is easily controlled using Cauchy-Schwarz inequality, Fubini identity and standard inequalities and we obtain:
	\begin{equation*}
		\Exp \Big[\sup_{\sup_{s\in[0,t]}} \vert A_s \vert^2\Big] 
		 \leq K^2 \,t \, \int_0^t \Exp \Big[ \sup_{-\tau\leq u\leq s} \vert X^{k}_u - X^{k-1}_u \vert^2\Big ] \, ds
	\end{equation*}
	Similarly, the martingale term $D_t$ is bounded using the Burkholder-Davis-Gundy theorem to the $d$-dimensional martingale $(\int_0^t ( g(s,X^{k}_s)-g(s,X^{k-1}_s)) \, dW_s)$ and we obtain:	
	\begin{equation*}
		\Exp \Big[\sup_{0\leq s\leq t} \vert D_s \vert^2\Big] \leq 4 K^2 \, \int_0^t \Exp \Big[ \sup_{-\tau \leq u\leq s} \vert X^{k}_u - X^{k-1}_u \vert^2 \Big]  \, ds 
	\end{equation*}
	
	Let us now deal with the deterministic interaction terms $\tilde{B}_t$ and $C_t$. We have:
	\begin{align*}
		\vert \tilde{B}_t \vert^2 &=\left \vert \int_0^t ds \int_{\Gamma}\lambda(r')dr' \int_{-\tau}^0 d\eta(r,r',u) (\Exp_Z[b(r,r',X^{k}_s,Z^{k}_{s+u}) - b(r,r',X^{k-1}_s,Z^{k}_{s+u})]) \right\vert^2\\
		& \leq t\,\lambda(\Gamma)\,\kappa \int_0^t ds \int_{\Gamma}\lambda(r')dr' \int_{-\tau}^0 d\eta(r,r',u) \Exp_Z\left[ \vert b(r,r',X^{k}_s,Z^{k}_{s+u}) - b(r,r',X^{k-1}_s,Z^{k}_{s+u})\vert^2\right])\\
		&\leq t \lambda(\Gamma)^2 \kappa^2 L^2 \int_0^t \vert X^k_s - X^{k-1}_s\vert^2\,ds \leq t \lambda(\Gamma)^2 \kappa^2 L^2 \int_0^t \sup_{-\tau \leq u\leq s}\vert X^k_u - X^{k-1}_u\vert^2\,ds
	\end{align*}
	hence easily conclude that 
	\[\Exp[\sup_{s\in[0,t]}\vert \tilde{B}_s \vert^2] \leq t \lambda(\Gamma)^2 \kappa^2 L^2 \int_0^t \Exp[\sup_{-\tau \leq u\leq s}\vert X^k_u - X^{k-1}_u\vert^2]\,ds \]
	and similarly 
		\[\Exp[\sup_{s\in[0,t]}\vert C_s \vert^2] \leq t \lambda(\Gamma)^2 \kappa^2 L^2 \int_0^t \Exp[\sup_{-\tau \leq u\leq s}\vert X^k_u - X^{k-1}_u\vert^2]\,ds \]
		
	Eventually, the terms $E_t$ and $F_t$ are treated using Burkholder-David-Gundy (BDG) inequality instead of Cauchy-Schwarz' together with similar arguments as used for $\tilde{B}_t$ and $C_t$. Using the cylindrical nature of the Brownian motions $(B_t)$, BDG inequality yields, for the term $E_t$ (and similarly for the term $F_t$):
	\[\Exp[\sup_{s\in[0,t]}\vert \Theta_s \vert^2] \leq 4 \lambda(\Gamma)^2 \kappa^2 L^2 \int_0^t \Exp[\sup_{-\tau \leq u\leq s}\vert X^k_u - X^{k-1}_u\vert^2]\,ds \]
	for $\Theta_t$ equal to $E_t$ or $F_t$.
	Putting all these estimates together, we get:
	\begin{equation}\label{eq:Bounds}
		\Exp\Big[\sup_{s\in[0,t]} \vert X^{k+1}_s-X^k_s \vert^2 \Big] \leq 6(T+4)(K^2 + 2\lambda(\Gamma)^2\,L^2\,\kappa^2 ) \int_0^t \Exp[ \sup_{-\tau \leq u\leq s} \vert X^{k}_u - X^{k-1}_u \vert^2] ds
	\end{equation}
Moreover, since $ X^{k+1}_t \equiv X^{k}_t$ for $t\in [-\tau, 0]$ by definition, we have, noting
\[M^k_t =\Exp\Big[\sup_{-\tau \leq s\leq t} \vert X^{k+1}_s-X^k_s \vert^2 \Big],\] 
the recursive inequality $M^k_t \leq K'' \int_0^t M^{k-1}_s\,ds$ with $K''=6(T+4)(K^2 + 2 \lambda(\Gamma)^2+\kappa^2 )$, which classically allows concluding on the existence and uniqueness of solutions. 
\end{proof}

\section{Uniform propagation of chaos}\label{append:UniformPropagation}
In this appendix we prove the uniform propagation of chaos property stated in Theorem~\ref{thm:UniformPropaChaos}. In more detail, we prove the following:
\begin{theorem}[Uniform propagation of chaos]
	If the Lipschitz constant of the sigmoid is uniformly bounded by $K_S$, $\theta(r)\in[\theta_m,\theta_M]$ and
	\[K_S\leq \sqrt{\frac{3\theta_M-2\theta_m}{{3\theta_M \theta_m}}},\]
	then the convergence of the network equations towards the mean-field equations is uniform in time, i.e. there exists a constant $C>0$ such that for all $T>0$, for all $i\in\N$ a neuron at location $r\in\Gamma$,
	\[\sup_{0\leq t\leq T} \Exp[ \vert V^i_t - \bar{V}^i_t \vert ] \leq \frac {C}{\sqrt{N}}\]
	for $\bar{V}^i_t$ the a particular process, the coupled process, with law $\bar{V}_t(r)$.
\end{theorem}
\begin{proof}
	The proof of this theorem necessitates a thorough control of the difference between the process $V^i_t$ solution of the network equations and $\bar{V}^i_t$ the \emph{coupled} process, i.e. the solution of the mean-field equation built with the same Brownian motions $W^i_t$ and $B^i_t$ and with the same initial condition as $V^i_t$. It is a refinement of the main theorem of~\cite{touboulNeuralfields:11}. 

	Let us denote by $p:\N\mapsto\N$ the population function associating to a neuron index $j$ the population $\gamma$ it belongs to.  Using the expression of $V^i_t$ and $\bar{V}^i_t$ given implicitly using the variation of constant formula (and making use of the linearity of the intrinsic dynamics), we easily obtain that:
		\begin{multline}\label{eq:vimoinsvibar}
		\displaystyle{V^i_t - \bar{V}^i_t \leq \int_0^t e^{-(t-s)/\theta(r)}  \frac 1 {P(N)} \sum_{j=1}^N \frac 1 {N_{p(j)}}  S(r_{p(j)},V^j_{s-\tau(r,r_{p(j)})})} \\
		\displaystyle{- \int_{\Gamma}J(r,r') \Exp[S(r',\bar{V}_{s-\tau(r,r')}(r'))]\lambda(r')\,dr'  \,ds} \\ 
			 \displaystyle{+  \int_0^t e^{-(t-s)/\theta(r)} \frac 1 {P(N)} \sum_{j=1}^N \frac 1 {N_{p(j)}} S(r_{p(j)},\bar{V}^j_s)dB^i_s -} \\
			\displaystyle{\int_{\Gamma}\int_0^t e^{-(t-s)/\theta(r)} \Exp{[S(r',\bar{V}_{t-\tau(r,r')}(r'))]}\big)\, dB^i_s\lambda(r')\,dr'.}
		\end{multline}
		Introducing the term $\frac 1 {P(N)} \sum_{j=1}^N \frac 1 {N_{p(j)}}  S(r_{p(j)},\bar{V}^j_{s-\tau(r,r_{p(j)})})$ we get:
		\begin{multline}\label{eq:vimoinsvibarDecomp}
		\displaystyle{\vert V^i_t - \bar{V}^i_t \vert^2 \leq 4 \int_0^t e^{-(t-s)/\theta(r)}  \bigg \vert \frac 1 {P(N)} \sum_{j=1}^N \frac 1 {N_{p(j)}}  S(r_{p(j)},V^j_{s-\tau(r,r_{p(j)})}) - S(r_{p(j)},\bar{V}^j_{s-\tau(r,r_{p(j)})})\bigg \vert\,ds} \\
		\displaystyle{+ 4\int_0^t e^{-(t-s)/\theta(r)}  \bigg \vert \frac 1 {P(N)} \sum_{j=1}^N \frac 1 {N_{p(j)}}  S(r_{p(j)},\bar{V}^j_{s-\tau(r,r_{p(j)})})	- \mathcal{E}_{r'}\big[J(r,r') \Exp[S(r',\bar{V}_{s-\tau(r,r')}(r'))] \big]  \bigg \vert\,ds }\\ 
		\displaystyle{	 + 4\bigg \vert \int_0^t e^{-(t-s)/\theta(r)} \frac 1 {P(N)} \sum_{j=1}^N \frac 1 {N_{p(j)}} \Big(S(r_{p(j)},\bar{V}^j_s) - S(r_{p(j)},\bar{V}^j_s) \Big)dB^i_s}\\
		\displaystyle{	+ 4\bigg \vert \int_0^t e^{-(t-s)/\theta(r)} \frac 1 {P(N)} \sum_{j=1}^N \frac 1 {N_{p(j)}} \bigg (S(r_{p(j)},\bar{V}^j_s) - \mathcal{E}_{r'}\Big[\Exp{[S(r',\bar{V}_{t-\tau(r,r')}(r'))]}\Big]\bigg)\, dB^i_s \bigg\vert}\\
		\displaystyle{	=:A_t(r)+B_t(r)+C_t(r)+D_t(r).}
		\end{multline}
		Let us assume that $\theta(r)\in (\theta_m,\theta_M)$ with $\theta_m>0$ and $\theta_M<\infty$, and define $\tilde{\theta}^{\alpha}(r)$ the strictly positive, finite quantity such that $1/\theta(r)=1/(\theta_M+\alpha) +  1/\tilde{\theta}^{\alpha}(r)$ for an arbitrary $\alpha>0$. Let us also denote by $\tau_M$ the quantity $\max_{r,r'\in \Gamma} \tau(r,r')$. 

		The first and third terms are handled using Cauchy-Schwarz and Burkholder-David-Gundy's inequality. The expectation of the first term is readily upperbounded by:
		\[\Exp[A_t^2(r)]\leq 2(\theta_M+\alpha) K_S^2 \int_0^t e^{-2(t-s)/\tilde{\theta}^{\alpha}(r) } \sup_{u\in[s-\tau_M,s]} \sup_{j\in\N}\Exp[{\vert V_{u}^j -\bar{V}_u^j}\vert^2]\]
		and
		\[\Exp[C_t^2(r)]\leq K_S^2\int_0^t e^{-2(t-s)/\theta(r) } \sup_{u\in[s-\tau_M,s]} \sup_{j\in\N}\Exp[{\vert V_{u}^j -\bar{V}_u^j}\vert^2].\]
		The other two terms are treated exactly in the same manner as in~\cite{touboulNeuralfields:11} and yield terms bounded by $K (\mathbbm{e}(N)+1/N)$. Putting these evaluations together we obtain the following inequality on $\Delta_t:=\sup_{i\in\N}\sup_{s\in[t-\tau_M,t]} \Exp[\vert V^i_s-\bar{V}^i_s\vert^2]$:
		\[\Delta_t \leq K (\mathbbm{e}(N)+1/N) + K' e^{\tau_M/\tilde{\theta}^{\alpha}(r)}\int_0^t e^{-(t-s)/\tilde{\theta^{\alpha}(r)}} \Delta_s\,ds.\]
		The exponential term in $\tau_M$ correspond to Applying Gronwall's lemma to the quantity $\Delta_t e^{t/\tilde{\theta}^{\alpha}}$, we obtain that:
		\[\Delta_t e^{t/\tilde{\theta}^{\alpha}} \leq K (\mathbbm{e}(N)+1/N) \left(e^{t/\tilde{\theta}^{\alpha}} +K' e^{\tau_M/\tilde{\theta}^{\alpha}(r)}\int_0^t  e^{s/\tilde{\theta}^{\alpha}}e^{K'(t-s)}\,ds\right) \]
		which is uniformly bounded in $t$ as soon as $K' e^{\tau_M/\tilde{\theta}^{\alpha}(r)}<\frac 1 {\tilde{\theta}^{\alpha}(r)}$, i.e. when $K_S$ and $\tau_M$ are small enough in comparison with $1/\theta(r)$. Fixing $\alpha=\theta_M/2$, we obtain the announced property. 
\end{proof}

\section{Existence and Uniqueness of solutions for the reduced equations}\label{append:ExistenceUniqueness}

We know from~\cite{touboulNeuralfields:11} that there exists a unique solution to the mean-field equations, and that necessarily the solution starting from Gaussian chaotic initial condition is Gaussian with mean and standard deviation satisfying equations~\eqref{eq:DDEIntegroDiff}. We show here that these equations are well posed, distinguishing the finite and infinite population cases. In both cases, we will need to use a regularity property of the function $f$ with respect to $\mu$ and $v$, proved in lemma~\ref{lem:RegularityF}. Note that $f$ can become singular in some cases, for instance when the sigmoid has a singularity and the standard deviation of the solution can reach zero.

Let us first notice that in the case where $S(r,x)$ is equal to  $\erf(g(r)x)$ with $g(r)>0$ bounded, then $f(r,\mu,v)=\erf({g(r)\,\mu}/{1+g(r)^2\,v})$
which is uniformly Lipschitz continuous in $\mu$ and $v$. This was the case in all the applications of the present paper.

In the general case, we have the following property:

\begin{lemma}\label{lem:RegularityF}
	Assume that:
	\begin{itemize}
		\item the sigmoids $S(r,x)$ have derivatives in $x$ uniformly bounded
		\item the diffusion coefficient is lowerbounded: $\min_{r\in\Gamma}(\Lambda(r,t))\geq \Lambda_0>0$ for all $t\geq 0$ 
		\item the initial variance is uniformly lowerbounded by a positive quantity $v^0(r,t)\geq v_0>0$ for $t \in [-\tau,0]$ and $r \in \Gamma$
	\end{itemize}
	Then the function $f(r,\mu,v)$ is uniformly Lipschitz continuous in $\mu$ and $v$ on the trajectories.
\end{lemma}
\proof{
 Thanks to uniform lowerbound of the functions $\Lambda(r,t)$ and of the initial condition on the variances $v^0(r)$, it is easy to show using the integral version of the system~\ref{eq:DDEIntegroDiff} that $v(r,t) \geq v_m\eqdef \min(v_0,\Lambda_{0}^2\theta_m/2)$  (we recall that $\theta_m$ is the strictly positive lower bound of the characteristic times $\theta_{\alpha}$). The function $f$ writes:
\[f(r,x,y) = \frac{1}{\sqrt{2\pi y}}\int_{\R} S(r,z) e^{\frac{(z-x)^2}{2y}}= \int_{\R} S(r,z\sqrt{y} +x) \frac{e^{-z^2/2}}{\sqrt{2\pi}}\,dz.\]
Since Gaussian distribution have exponential moments, it is straightforward to show that derivative with respect to $x$ and $y$ read:
\begin{equation}\label{eq:DiffF}
	\begin{cases}
		\derpart{f(r,x,y)}{x} &= \int_{\R} \derpart{S}{x}(r,z\sqrt{y} +x) \frac{e^{-z^2/2}}{\sqrt{2\pi}}\,dz\\
		\derpart{f(r,x,y)}{y} &= \int_{\R} z\,\derpart{S}{x}(r,z\sqrt{y} +x) \frac{e^{-z^2/2}}{2\sqrt{2\pi y}}\,dz\\
	\end{cases}
\end{equation}
Using the assumption that the sigmoids have bounded derivatives (upperbounded by a quantity denoted  $\Vert \derpart{S}{x}'\Vert_{\infty}$), we have:
\begin{equation*}
	\begin{cases}
		\left \vert \derpart{f(r,x,y)}{x}\right \vert  &\leq \Vert \derpart{S}{x}\Vert_{\infty}\\
		\left \vert \derpart{f(r,x,y)}{y}\right \vert &\leq \frac{\Vert \derpart{S}{x}\Vert_{\infty}}{2\sqrt{v_0}} \int_{\R} \vert z\vert \frac{e^{-z^2/2}}{\sqrt{2\pi}}\,dz =  \frac{\Vert \derpart{S}{x}'\Vert_{\infty}}{\sqrt{v_0}} \int_{0}^{\infty} z \frac{e^{-z^2/2}}{\sqrt{2\pi}}\,dz = \frac{\Vert \derpart{S}{x}'\Vert_{\infty}}{\sqrt{2\pi v_0}} \\
	\end{cases}
\end{equation*}
 This property ensures global Lipschitz continuity of the vector field.
}

We now show existence and uniqueness of solutions of the equations~\eqref{eq:DDEIntegroDiff}. We start deal with the $P$-populations case and denote by $\C$ the Banach space of continuous functions mapping $[-\tau,0]$ into $E^{2P}$ endowed with the topology of the uniform convergence. Following Hale and Lunel~\cite{hale-lunel:93}, we consider the moment equations~\eqref{eq:DDEIntegroDiff} as ordinary differential equations on $\C$. We have the following:

\begin{theorem}\label{thm:ExistenceUniquenessFinite}
Let us assume that $t\mapsto I(r_{\alpha},t)$ and $t\mapsto \Lambda(r_{\alpha},t)$ are continuous, and that $(\mu,v)\mapsto f(r,\mu,v)$ is uniformly Lipschitz continuous,	then there exists a unique solution to the moment equations~\eqref{eq:DDEIntegroDiff} in the finite-population case, starting from an initial condition  $\mu^0(r_{\alpha}) \in C([-\tau,0], \R)$ and $v^0(r_{\alpha})$. 
\end{theorem}

This theorem is a simple application of theorems~\cite[Thm 2.1 and 2.3]{hale-lunel:93} ensuring existence and uniqueness of solutions as soon as the vector field is Lipschitz-continuous in $\C$ and continuous in time. 

Let us now deal with the spatially extended equations.
\begin{theorem}\label{thm:ExistenceUniquenessSpace}
	For the sake of simplicity, we assume here that the density function $\lambda(r)$ is upperbounded by a constant $A$. Under the assumptions that:
	\begin{itemize}
		\item $J$ is square integrable with respect to Lebesgue's measure on $\Gamma^2$, i.e. belongs to $\mathbbm{L}^2(\Gamma^2,\R)$,
		\item $\sigma^2$ is square integrable with respect to Lebesgue's measure on $\Gamma^2$,
		\item the external current $I(r,t)$ is a bounded, continuous functions of time taking values in $\L^2(\Gamma,\R)$ 
		\item the external noise $\Lambda^2(r,t)$ is a continuous functions of time taking values in $\L^2(\Gamma,\R)$, and is uniformly lowerbounded by a strictly positive constant: $\Lambda(r,t)^2 \geq \Lambda_0^2>0$ for all $(r,t)\in \Gamma\times \R^+$,
		\item The derivative of $S(r,x)$ with respect to its second variable is uniformly bounded,
	\end{itemize}
	then for any initial condition $\mu(r,t)\in C([-\tau,0],\L^2(\Gamma,\R))$ and $v(r,t)\in C([-\tau,0],\L^2(\Gamma))$ uniformly lowerbounded by a quantity $v_0>0$, there exists a unique solution to the moments mean-field equations~\eqref{eq:MFEFiringRateSpace} which moreover belongs to $ C([-\tau,T],L^2(\Gamma,\R^2))$.
\end{theorem}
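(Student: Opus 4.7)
The plan is to reformulate the moment equations~\eqref{eq:DDEIntegroDiff} as an abstract delay differential equation on the Hilbert space $H\eqdef\L^2(\Gamma,\R^2)$, to build a local solution by a fixed-point argument in $C([-\tau,T],H)$, and to extend it globally using a priori estimates together with the propagation of a positive lower bound on the variance.

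First, I would write the system in mild (integral) form: setting $u(t)=(\mu(\cdot,t),v(\cdot,t))\in H$ and $A(r)=\mathrm{diag}(1/\theta(r),2/\theta(r))$, which defines a bounded multiplication operator on $H$ since $\theta(r)\geq\theta_m>0$, any continuous solution must satisfy
\[ u(r,t)=e^{-A(r)t}u(r,0)+\int_0^t e^{-A(r)(t-s)}\bigl(\mathcal{N}[u_s](r)+F(r,s)\bigr)\,ds, \quad t>0,\]
where $u_s(\cdot,\theta)=u(\cdot,s+\theta)$ for $\theta\in[-\tau,0]$, $F(r,s)=(I(r,s),\Lambda^2(r,s))$, and $\mathcal{N}$ denotes the pair of nonlinear integral operators appearing in~\eqref{eq:DDEIntegroDiff}.

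The main step is to establish a global Lipschitz estimate for $\mathcal{N}$ on the closed subset $\mathcal{K}\eqdef\{u\in C([-\tau,T],H):v(r,t)\geq v_m \text{ a.e.}\}$, where $v_m\eqdef\min(v_0,\Lambda_0^2\theta_m/2)$. For the mean component, Cauchy--Schwarz combined with the upper bound $\lambda(r)\leq A$ yields
\[ \int_\Gamma\!\left(\int_\Gamma\!\lambda(r')J(r,r')f(r',\mu(r'),v(r'))\,dr'\right)^{\!2}\!dr\ \leq\ A^2\,\Vert J\Vert^2_{\L^2(\Gamma^2)}\,\Vert f(\cdot,\mu(\cdot),v(\cdot))\Vert^2_{\L^2(\Gamma)},\]
so this operator takes values in $\L^2(\Gamma)$. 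Combined with the uniform Lipschitz estimates on $f$ derived in~\eqref{eq:DiffF} — which require $v\geq v_m>0$ and use the bounded derivative of $S$ — this delivers Lipschitz continuity of the mean operator on $\mathcal{K}$. The variance component is handled analogously using square integrability of $\sigma^2$ and the fact that $f^2$ is bounded and Lipschitz on $\R\times[v_m,\infty)$ (exploiting boundedness of $S$). A Banach fixed-point argument on $\mathcal{K}$ for $T$ small enough then produces a unique local solution.

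Two ingredients complete the argument. First, the lower bound $v(r,t)\geq v_m$ is preserved by the dynamics: since the nonlinear integral term in the $v$-equation is nonnegative and $\Lambda^2(r,t)\geq\Lambda_0^2$, one has $\partial_t v(r,t)\geq -2v(r,t)/\theta(r)+\Lambda_0^2$, and a direct comparison argument yields $v(r,t)\geq\min(v(r,0),\Lambda_0^2\theta(r)/2)\geq v_m$. Second, a Grönwall-type estimate in $H$, again relying on square integrability of $J$ and $\sigma^2$, provides an a priori bound on $\Vert u(t)\Vert_H$ that rules out finite-time blow-up, so the local solution is continued to all of $[-\tau,T]$. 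The main technical obstacle is the $\L^2$-Lipschitz control of the nonlinear integral operators, which is precisely where the square-integrability hypotheses on $J$ and $\sigma^2$ — stronger than the bounds used in~\cite{touboulNeuralfields:11} — become essential.
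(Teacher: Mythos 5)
Your proposal is correct and follows essentially the same route as the paper: a fixed-point argument in $C([-\tau,T],\L^2(\Gamma,\R^2))$ applied to the variation-of-constants (mild) form of~\eqref{eq:DDEIntegroDiff}, with the crucial ingredients being the propagated variance lower bound $v_m=\min(v_0,\Lambda_0^2\theta_m/2)$ ensuring uniform Lipschitz continuity of $f$ and $f^2$, and Cauchy--Schwarz estimates exploiting $\lambda\leq A$ together with the $\L^2(\Gamma^2)$ bounds on $J$ and $\sigma^2$. The only difference is packaging: you run a local contraction on the invariant set $\{v\geq v_m\}$ and then continue via an a priori Gr\"onwall bound, whereas the paper obtains the solution globally on $[0,T]$ in one stroke through the Picard iteration with the factorial gain $(K't)^n/n!$; both are valid.
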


\begin{proof}
	The moment mean-field equations~\eqref{eq:MFEFiringRateSpace} constitute a dynamical system in the Banach spaces of functions of $\Gamma$ with values in $\R^2$. It is well known that the space of functions in $\mathcal{B}\eqdef C([-\tau,T],L^2(\Gamma,\R^2))$ endowed with the norm:
	\[\|(\varphi_1,\varphi_2)\|_{\mathcal{B}} = \sup_{s\in[-\tau,T]}\Big(\int_{\Gamma} \vert\varphi_1(r,s)\,dr\vert^2 + \int_{\Gamma} \vert\varphi_2(r,s)\vert^2\,dr\Big)^{1/2}\]
	is a Banach space. We will show the existence and uniqueness of solutions in this space. We further define the norm up to time $t>0$ of two elements of $\mathcal{B}$ by:
	\[D_t(\varphi_1,\varphi_2)=\sup_{s\in[-\tau,t]}\Big(\int_{\Gamma} \vert\varphi_1(r,s)\,dr\vert^2 + \int_{\Gamma} \vert\varphi_2(r,s)\vert^2\,dr\Big).\]
	
	Let us start by ensuring that any possible solution is bounded in this space. We recall that:
		\begin{multline*}
			\displaystyle{\mu(r,t) =e^{-\frac{t}{\theta(r)}}\bigg(\Ex{V^0_0(r)}+\int_0^t e^{\frac{s}{\theta(r)}} \Big ( I(r,s)} \\
			 \displaystyle{+\int_{\Gamma} \lambda(r')dr' J({r,r'}) f(r',\mu(r',s-\tau(r,r')),v(r',s-\tau(r,r')))\Big) 
			ds\bigg)}
		\end{multline*}
		and hence we have:
	\begin{align*}
		\Vert \mu(\cdot,t) \Vert_{\L^2(\Gamma,\R)}^2 
		 &\displaystyle{\leq 3\,\bigg(\Vert \Ex{V^0_0(\cdot)}\Vert_{\L^2(\Gamma,\R)}^2 +\int_{\Gamma}\vert \int_0^t I(r,s)\,ds \vert^2\,dr } \\
		&\displaystyle{\qquad +\int_{\Gamma}\,dr \vert \int_0^t ds \int_{\Gamma} J({r,r'}) f(r',\mu(r',s-\tau(r,r')),v(r',s-\tau(r,r')))\vert^2\bigg) \lambda(r')dr'}\\
		&\displaystyle{\leq 3\,\bigg(\Vert \Ex{V^0_0(\cdot)}\Vert_{\L^2(\Gamma,\R)}^2 + T^2 \sup_{t\in[0,T]} \Vert I(\cdot,s)\Vert^2_{\L^2(\Gamma,\R)}} \\
		&\displaystyle{\qquad +T \lambda(\Gamma) \int_{\Gamma}  \int_0^t ds \int_{\Gamma} \lambda(r')dr' \vert J({r,r'})\vert^2 \Vert f\Vert_{\infty}^2\bigg)\,dr}\\
		&\displaystyle{\leq 3\,\bigg(\Vert \Ex{V^0_0(\cdot)}\Vert_{\L^2(\Gamma,\R)}^2 + T^2 \sup_{t\in[0,T]} \Vert I(\cdot,s)\Vert^2_{\L^2(\Gamma,\R)} +T^2 A\,\lambda(\Gamma) \Vert J({r,r'})\Vert^2_{\mathbbm{L}^2(\Gamma^2,\R)} \Vert f\Vert_{\infty}^2\bigg)}
	\end{align*}
	where $\Vert f\Vert_{\infty}$ is the uniform upperbound of $f(r,\mu,v)$ in  $\R$, which is smaller or equal to the uniform supremum of the function $x\mapsto S(r,x)$ (which exists by assumption). The same types of calculations allow proving that:
	\[\Vert v(\cdot,t) \Vert_{\L^2(\Gamma,\R)}^2 \leq 3\,\bigg(\Vert v(\cdot,0)\Vert_{\L^2(\Gamma,\R)}^2 + T^2 \sup_{t\in[0,T]} \Vert \Lambda^2(\cdot,s)\Vert^2_{\L^2(\Gamma,\R)} +T^2 (\lambda^2)(\Gamma) A^2 \Vert \sigma^2({r,r'})\Vert^2_{\mathbbm{L}^2(\Gamma^2,\R)} \Vert f\Vert_{\infty}^4\bigg).\]
	
	These bounds do not depend upon time $t$ and hence prove that any solution of the moment mean-field equations have bounded norms in the space $\mathcal{B}$. 
	
	Routine methods for this type of infinite-dimensional systems ensure existence and uniqueness of solutions as soon as the vector field of the equation is Lipschitz-continuous for this norm. In our case, lemma~\ref{lem:RegularityF} ensures the global uniform in $r$ Lipschitz-continuity of the function $(\mu,v)\mapsto f(r,\mu,v)$. 
	Let us define for $(\mu,v) \in \mathcal{B}$ the transformation $\Phi(\mu,v)$ taking values in $\mathcal{B}$ and defined by:
		\[\left (
		\begin{array}{l}
			e^{-\frac{t}{\theta(r)}}\bigg(\mu(0,r) + \int_0^t e^{\frac{s}{\theta(r)}}\Big(I(r,s)+\int_{\Gamma} \lambda(r')dr' J({r,r'}) f(r',\mu(r',s-\tau(r,r')),v(r',s-\tau(r,r')))\Big) ds \bigg)\\
			e^{-\frac{2t}{\theta(r)}}\bigg(v(0,r) + \int_0^t e^{\frac{2s}{\theta(r)}}\Big(\Lambda^2(r,s)+\int_{\Gamma} \lambda(r')^2 dr' \sigma^2({r,r'}) f^2(r',\mu(r',s-\tau(r,r')),v(r',s-\tau(r,r')))\Big) ds \bigg)
		\end{array}\right)\]
		It is clear that any solution of the moment equations are fixed points of $\Phi$ and reciprocally, fixed points of $\Phi$ are solutions of the moment equations. Since $(\mathcal{B}, \Vert\cdot \Vert_{\mathcal{B}})$ is a Banach space, showing existence and uniqueness of solutions, i.e. of fixed points of $\Phi$, amounts showing a contraction property on $\Phi$. First of all, similarly to what was done to show that any solutions of the moment equations were bounded in $\mathcal{B}$, it is very easy to show that for any $(\mu,v)\in\mathcal{B}$, we have $\Phi(\mu,v)\in\mathcal{B}$. We use the classical iteration method to show existence and uniqueness of fixed point. To this end, we fix $\varphi^0=(\mu^0,v^0)\in\mathcal{B}$ arbitrarily and define the sequence $\varphi^n=(\mu^{n}, v^{n})_{n\in\N}$ iteratively by setting $(\mu^{n+1},v^{n+1})=\Phi(\mu^n,v^n)$. We recall that $f$ is Lipschitz-continuous as shown in the proof of Theorem~\ref{thm:ExistenceUniquenessFinite}, and we denote by $L$ the uniform Lipschitz constant of $f(r,x,y)$ in its two last variables. The function $f^2(r,x,y)$ is hence also uniformly Lipschitz-continuous in its two last variables with the Lipschitz constant $2\Vert f\Vert_{\infty} L$.  Let us now show that the vector field $\Phi$ is Lipschitz-continuous on $\mathcal{B}$. Let us fix $\varphi_1=(\mu_1,v_1)$ and $\varphi_2=(\mu_2,v_2)$ two elements of $\mathcal{B}$. We have:
	\begin{align*}
		D_t(\varphi_1,\varphi_2) & = \displaystyle{\sup_{s\in[-\tau,t]} \bigg\{\int_{\Gamma}\,dr \Big\vert \int_0^t\int_{\Gamma} \lambda(r')dr' J({r,r'}) \Big(f(r',\mu_1(r',s-\tau(r,r')),v_1(r',s-\tau(r,r'))) }\\
		& \displaystyle{\qquad \qquad -f(r',\mu_2(r',s-\tau(r,r')),v_2(r',s-\tau(r,r')))\Big)ds\Big \vert ^2}\\
		& \displaystyle{\qquad + \int_{\Gamma}\,dr \Big\vert \int_0^t\int_{\Gamma} \lambda^2(r')dr' \sigma^2({r,r'}) \Big(f^2(r',\mu_1(r',s-\tau(r,r')),v_1(r',s-\tau(r,r')))} \\
		& \displaystyle{\qquad \qquad -f^2(r',\mu_2(r',s-\tau(r,r')),v_2(r',s-\tau(r,r')))\Big)ds\Big \vert ^2\bigg\}}
	\end{align*}
	The two terms of the righthand side are treated similarly, let us hence deal with the first one. We have:
	\begin{align*}
		& \int_{\Gamma}\,dr \Big\vert \int_0^t\int_{\Gamma} \lambda(r')dr' J({r,r'}) \Big(f(r',\mu_1(r',s-\tau(r,r')),v_1(r',s-\tau(r,r'))) \\
		&\qquad \qquad -f(r',\mu_2(r',s-\tau(r,r')),v_2(r',s-\tau(r,r')))\Big)ds\Big \vert ^2\\
		&\quad \leq T \int_{\Gamma}\,dr  \int_0^t \Big\vert\int_{\Gamma} \lambda(r')dr' J({r,r'}) \Big(f(r',\mu_1(r',s-\tau(r,r')),v_1(r',s-\tau(r,r'))) \\
		&\qquad \qquad -f(r',\mu_2(r',s-\tau(r,r')),v_2(r',s-\tau(r,r')))\Big)ds\Big \vert ^2\\
		&\quad \leq T \int_{\Gamma}\,dr  \int_0^t \Big(\int_{\Gamma} \lambda(r')dr' J({r,r'})^2\Big)\bigg(\int_{\Gamma} \lambda(r')dr' \Big(f(r',\mu_1(r',s-\tau(r,r')),v_1(r',s-\tau(r,r'))) \\
		&\qquad \qquad -f(r',\mu_2(r',s-\tau(r,r')),v_2(r',s-\tau(r,r')))\Big)^2 \bigg)ds\\
		\end{align*}
		\begin{align*}
		&\quad \leq 2\,L^2\,T \int_{\Gamma}\,dr  \int_0^t \Big(\int_{\Gamma} \lambda(r')dr' J({r,r'})^2\Big)\bigg(\int_{\Gamma} \lambda(r')dr' \vert \mu_1(r',s-\tau(r,r'))-\mu_2(r',s-\tau(r,r'))\vert^2 \\
		&\qquad\qquad + \vert v_1(r',s-\tau(r,r')) -v_2(r',s-\tau(r,r')))\vert^2 \bigg)ds\\
		&\quad \leq 2\,A^2\,L^2\,T \int_{\Gamma}\,dr  \int_0^t \int_{\Gamma} dr' J({r,r'})^2 \Vert \varphi_1(\cdot,s-\tau(r,r'))-\varphi_2(\cdot,s-\tau(r,r'))\Vert^2_{\L^2(\Gamma,\R^2)} \\
		&\quad \leq 2\,A^2\,L^2\,T^2 \Vert J \Vert_{L^2(\Gamma\times\Gamma)}^2 \; \int_0^t D_s(\varphi_1,\varphi_2)\,ds
	\end{align*}
	Similarly, the second term is upperbounded by $8\,A^4\,L^2 \Vert f \Vert_{\infty}^2 \,T^2 \Vert \sigma^2 \Vert_{L^2(\Gamma\times\Gamma)}^2 \; \int_0^t D_s(\varphi_1,\varphi_2)\,ds$. These two bounds do not depend upon time, hence we have:
	\[ D_t(\varphi_1,\varphi_2) \leq K' \int_0^t D_s(\varphi_1,\varphi_2)\,ds\]
	with $K'=2\,A^2\,L^2\,T^2 (\Vert J \Vert_{L^2(\Gamma\times\Gamma)}^2 + 4\,A^2 \Vert \sigma^2 \Vert_{L^2(\Gamma\times\Gamma)}^2)$. Routine methods allow showing that:
	\[D_t(\varphi^{n},\varphi^{n-1}) \leq \frac{(K't)^n}{n!}D_T(\varphi^{1},\varphi^{0})\]
	and hence $\Vert\varphi^{n}-\varphi^{n-1}\Vert_{\mathcal{B}} \leq \sqrt{(K't)^n/n!} \Vert\varphi^{1}-\varphi^{0}\Vert_{\mathcal{B}}$, readily implying that $\varphi^n$ is a Cauchy sequence in the complete space $\mathcal{B}$ and hence converging in $\mathcal{B}$ towards a fixed point of $\Phi$. Uniqueness of the solution is also classically deduced from the above inequality. 
\end{proof}

\section{Spatio-temporal patterns for one-dimensional neural fields with reflective or zero boundary conditions}\label{append:BoundaryConditions}
In this appendix, we reproduce the results obtained in section~\ref{sec:periodicBounds} with different boundary conditions, functional connectivity kernel size, and synaptic coefficients characterizing network (II).

For reflective boundary conditions,  Proposition~\ref{pro:Synchro} applies and hence spatially homogeneous solutions exist and incidentally satisfy the system given by equations~\eqref{sec:periodicBounds} and hence the dependence on noise levels is given by the bifurcation diagram of Fig.~\ref{fig:BifDiagsDDE}. For zero boundary conditions, the neural field does not satisfy Proposition~\ref{pro:Synchro}. The spatial dynamics is numerically investigated using the same procedure as done in the main text, and it appears that in both cases the phenomena observed in the periodic neural field persist. 

For reflective boundary conditions, non spatially homogeneous persistent solutions arise are in the parameter region (A) (small noise). For small values of the noise parameter, the inhomogeneous initial conditions produce large amplitude waves that interact together, creating a transient complex structure of spatio-temporal activity that stabilizes on a fully synchronized stationary solution. For slightly larger noise amplitude, this transient irregular phase takes over and produce sustained dynamic irregular activity on the neural field. As noise is further increased, a spatio-temporal periodic activity arises as the waves become faster and stop interacting. When the noise parameter reaches parameter regions (B) or (C) the spatially homogeneous oscillatory activity is recovered after a short transient. For noise levels in the parameter region (D), the whole neural field converges towards the unique spatially stationary activity.  
\begin{figure}
	\begin{center}
		\subfigure[Reflective, $\Lambda=0.3$]{\includegraphics[width=.30\textwidth]{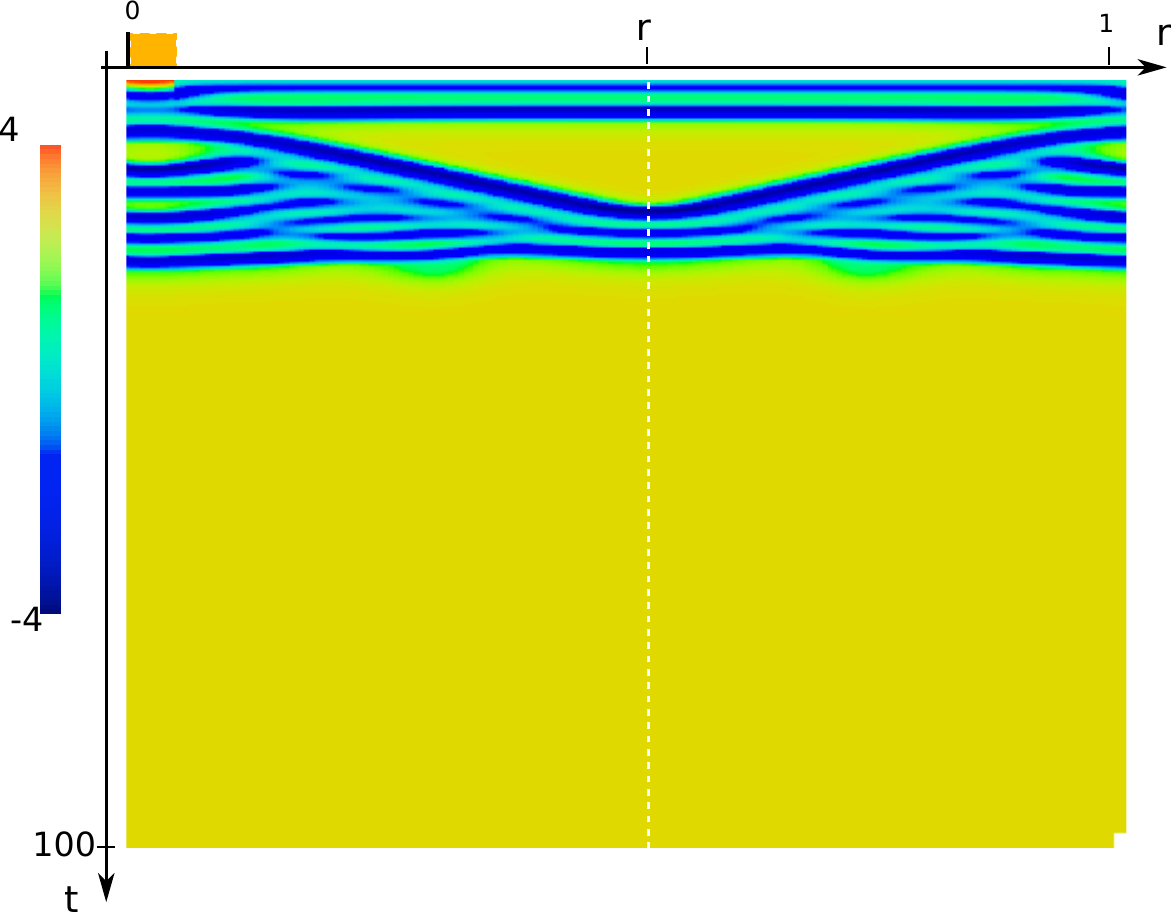}}\quad
		\subfigure[Reflective, $\Lambda=0.6$]{\includegraphics[width=.30\textwidth]{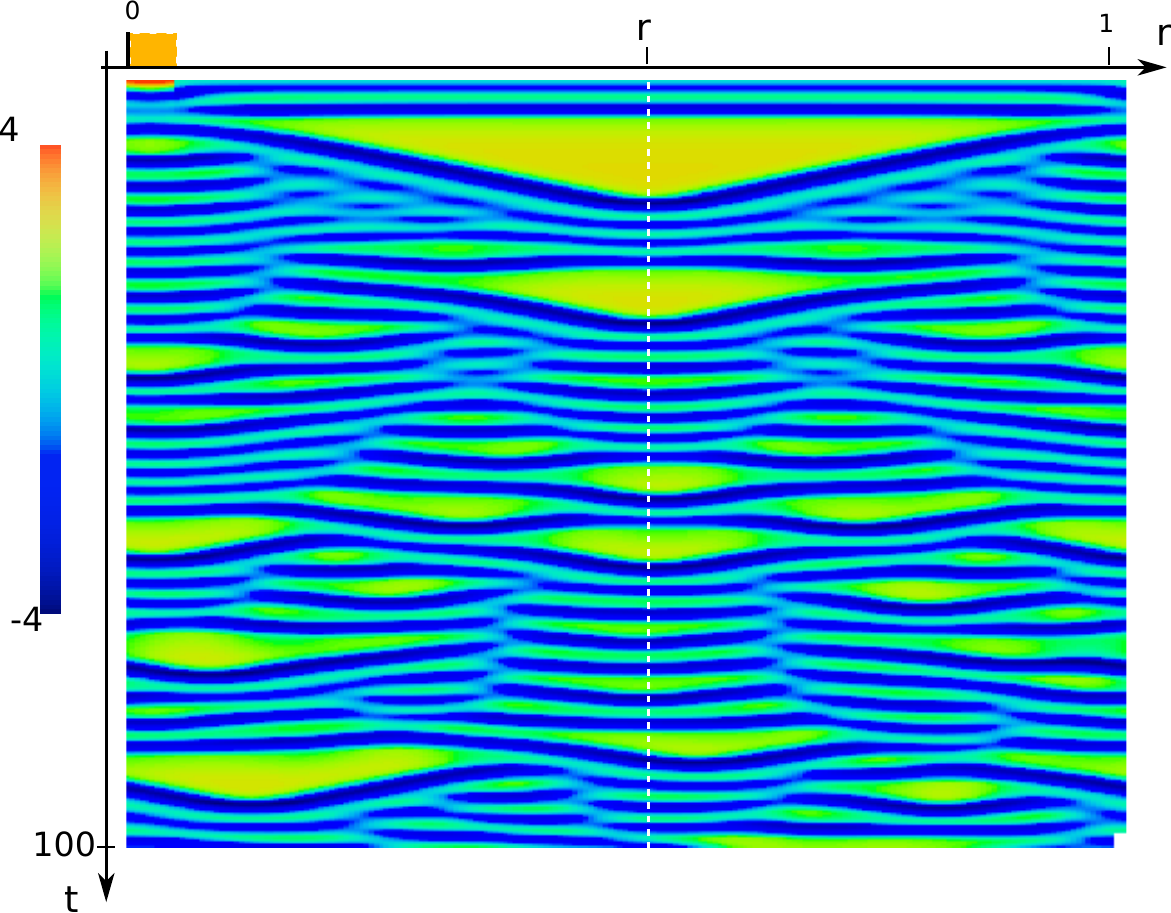}}\quad
		\subfigure[Reflective, $\Lambda=1$]{\includegraphics[width=.30\textwidth]{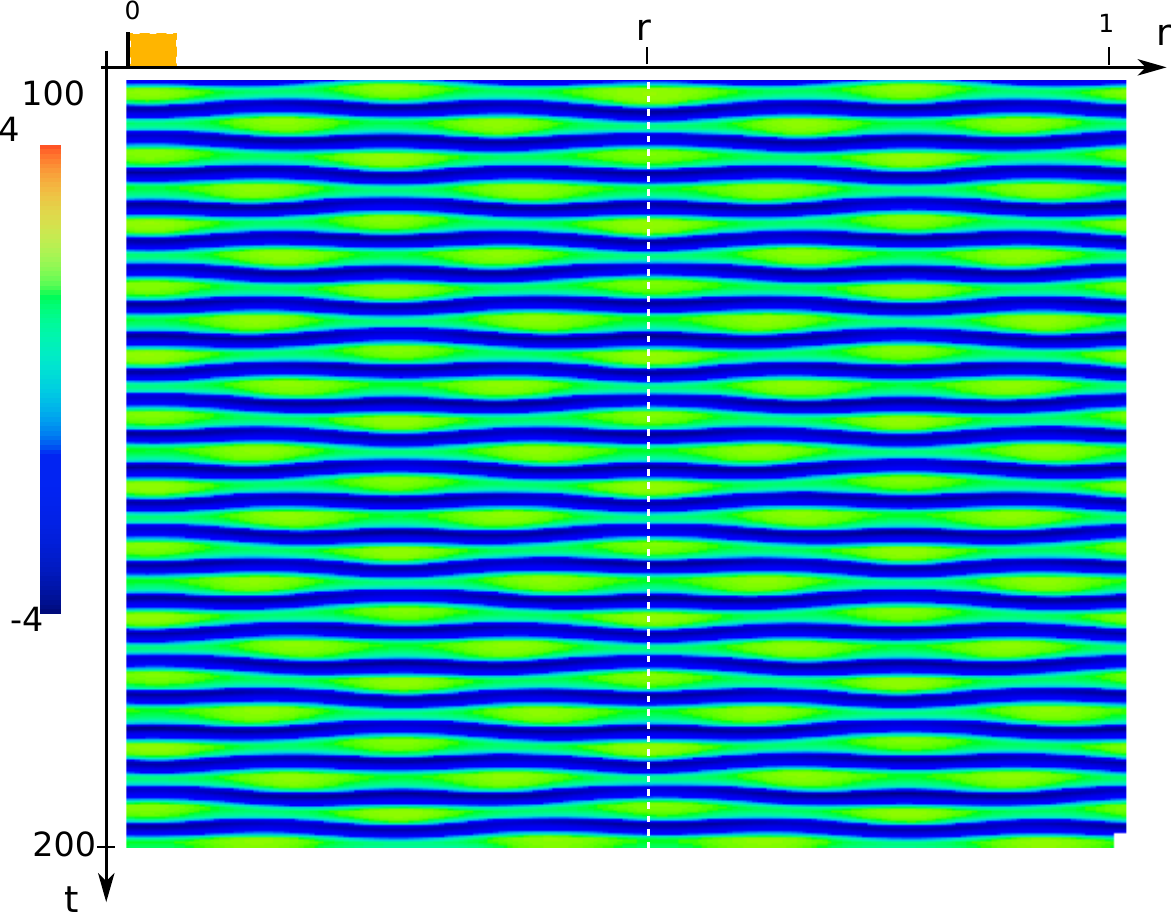}}\\
		\subfigure[Reflective, $\Lambda=1.5$]{\includegraphics[width=.30\textwidth]{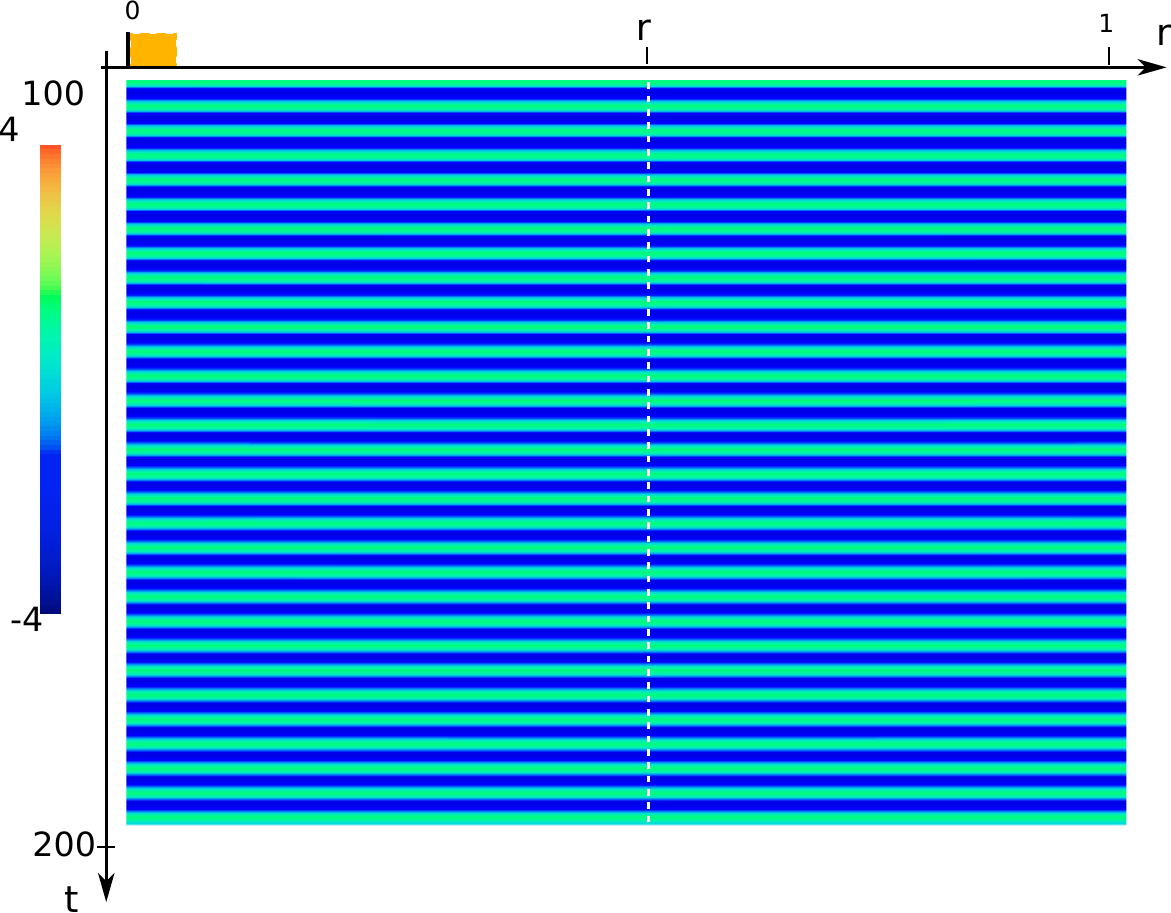}}\quad
		\subfigure[Reflective, $\Lambda=3$]{\includegraphics[width=.30\textwidth]{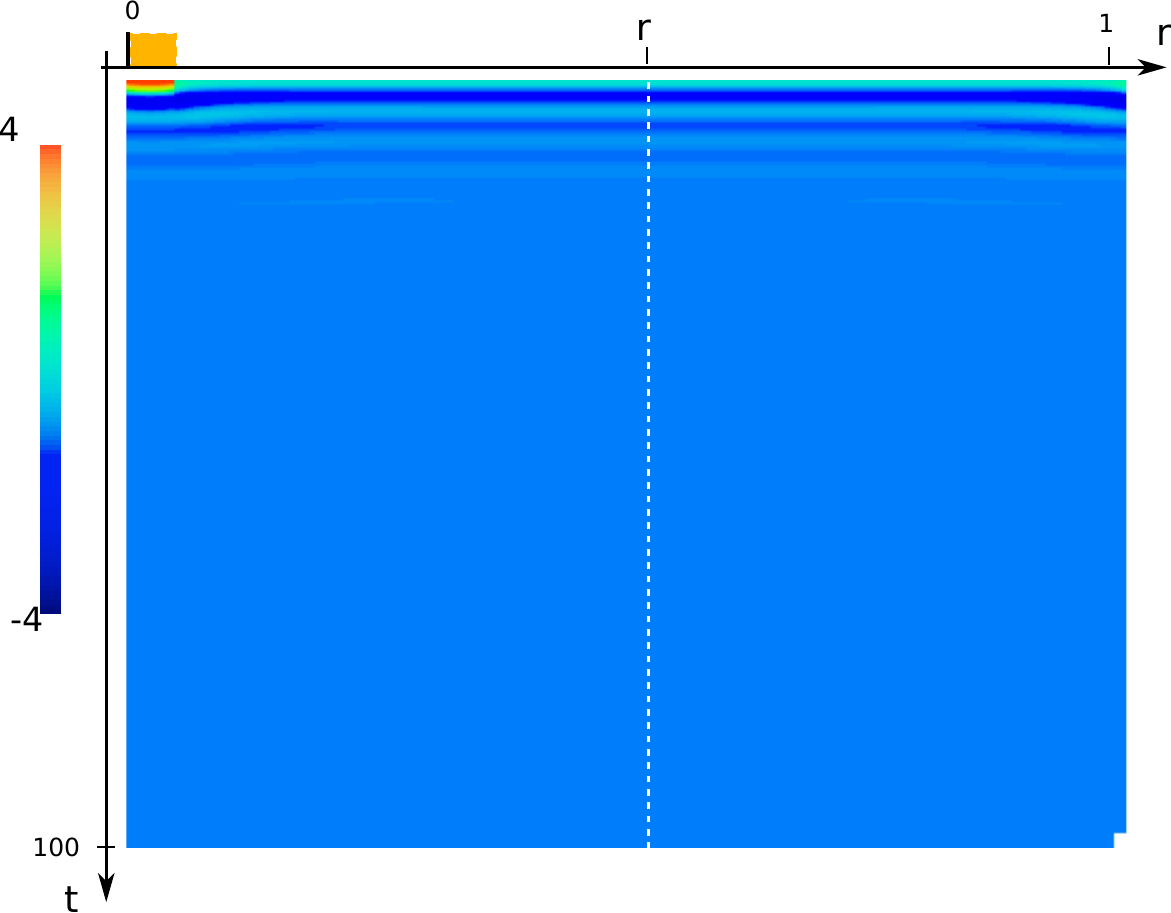}}\\
		\subfigure[Zero, $\Lambda=0.1$]{\includegraphics[width=.30\textwidth]{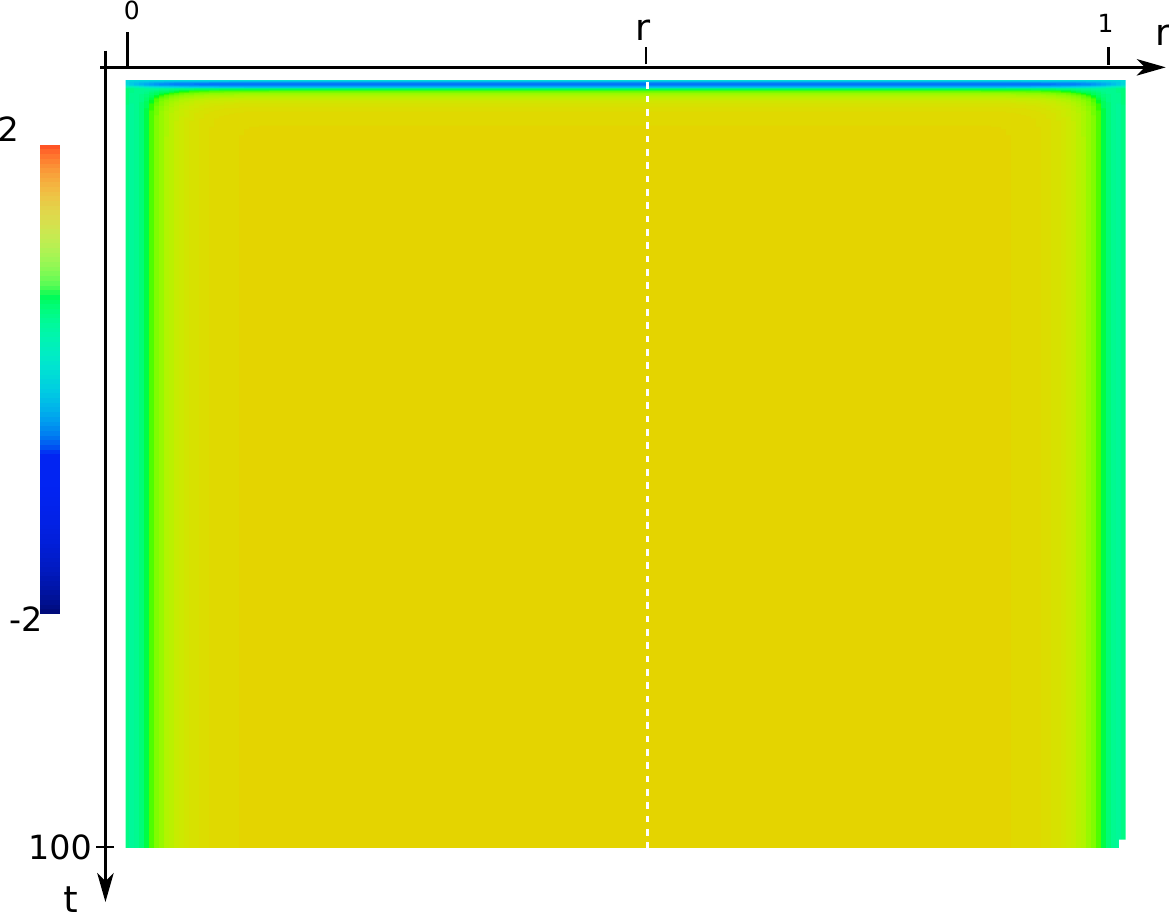}\label{fig:turing0_2Zero.pdf}}\quad
		\subfigure[Zero, $\Lambda=0.6$]{\includegraphics[width=.30\textwidth]{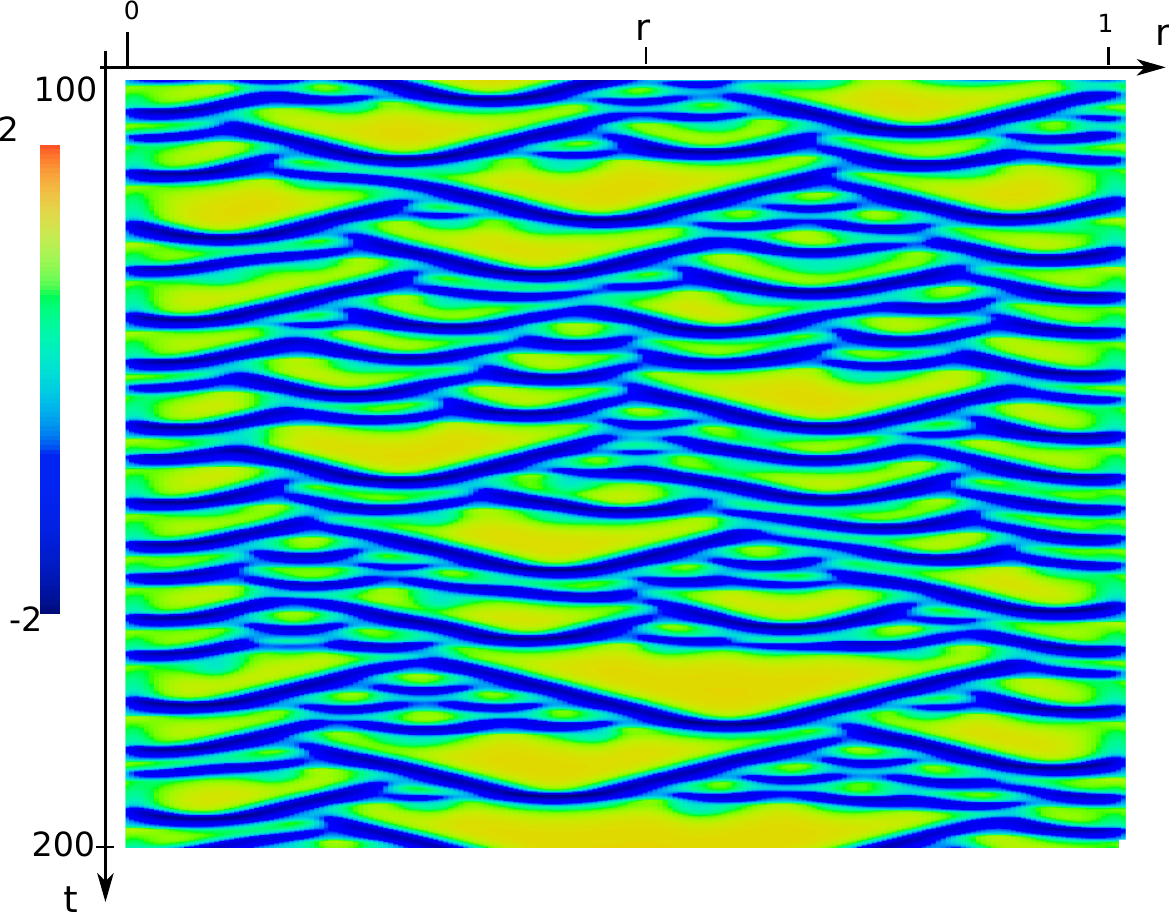}}\quad
		\subfigure[Zero, $\Lambda=1.5$]{\includegraphics[width=.30\textwidth]{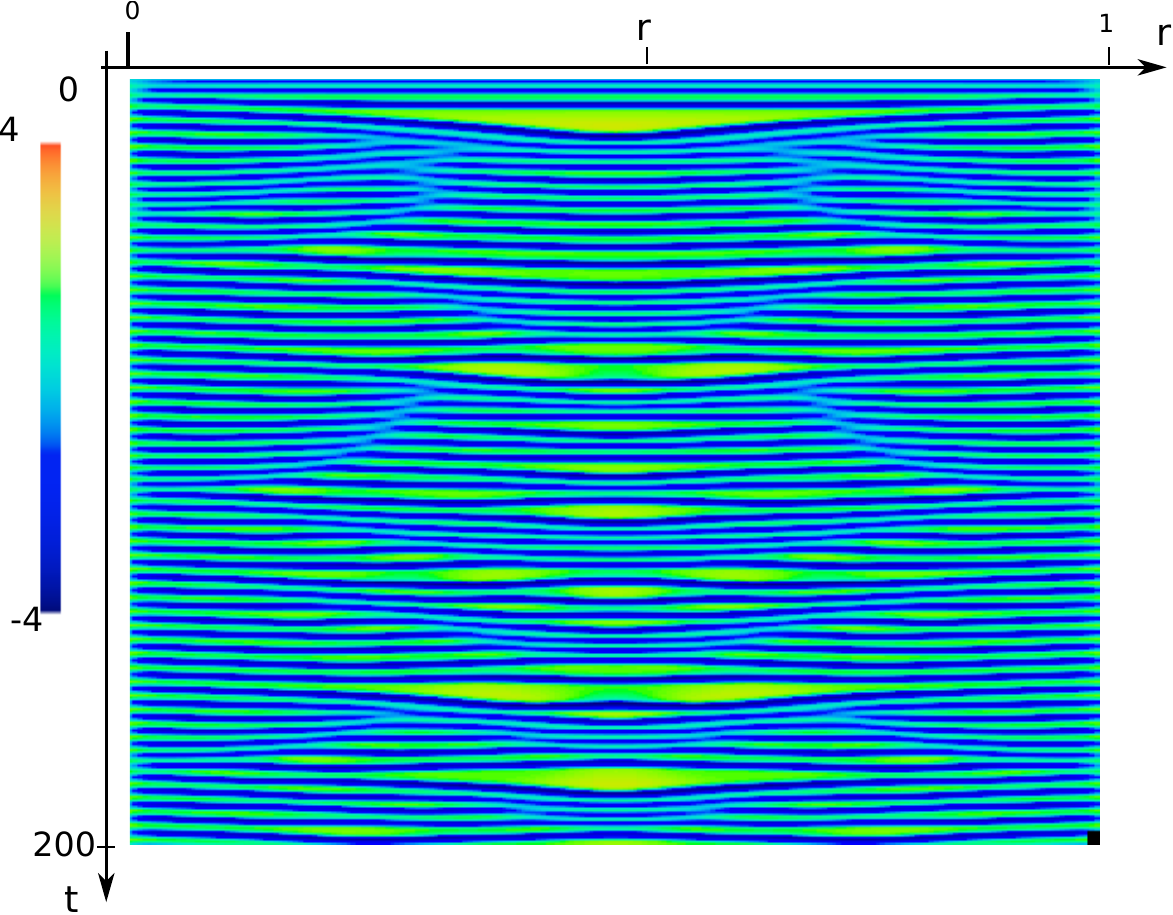}\label{fig:turing1_5Zero}}\\
		\subfigure[Zero, $\Lambda=2$]{\includegraphics[width=.30\textwidth]{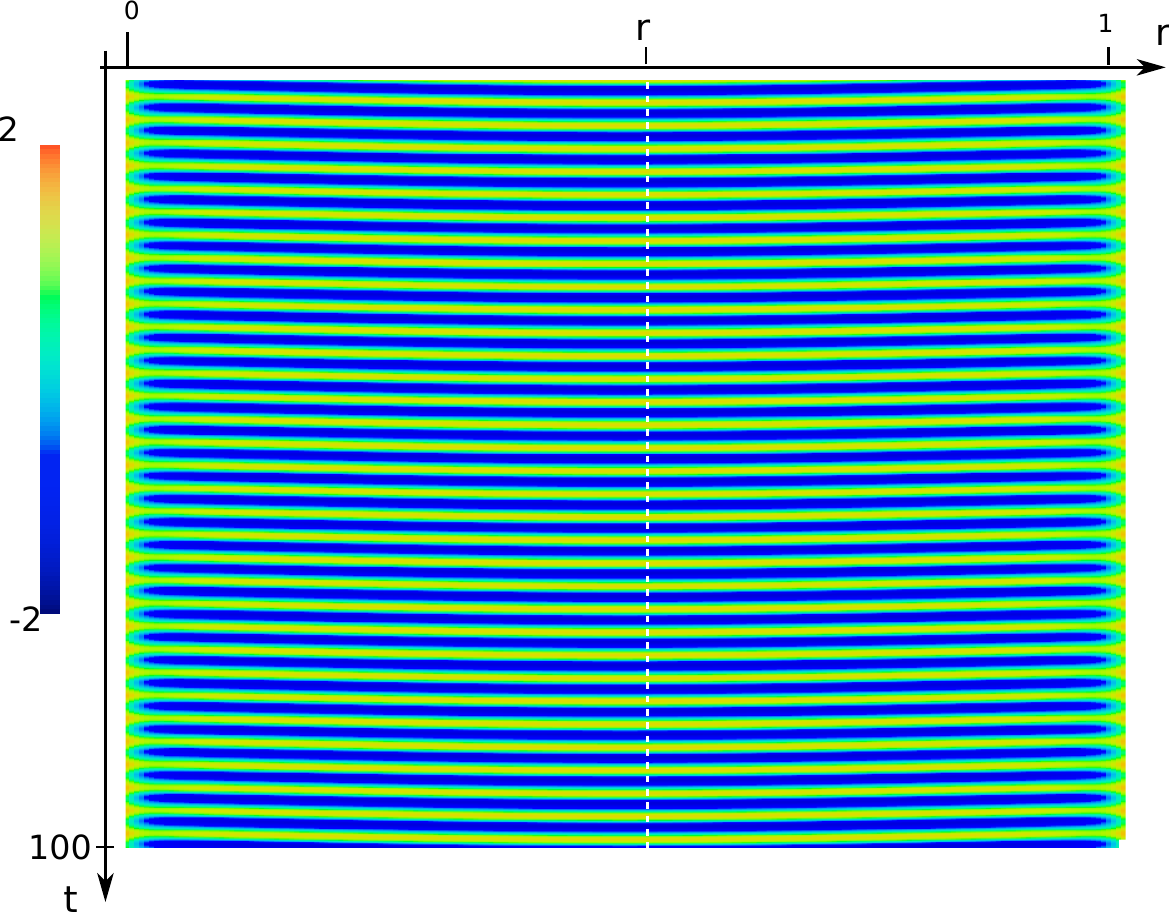}\label{fig:turing1_2Zero}}\quad
		\subfigure[Zero, $\Lambda=3$]{\includegraphics[width=.30\textwidth]{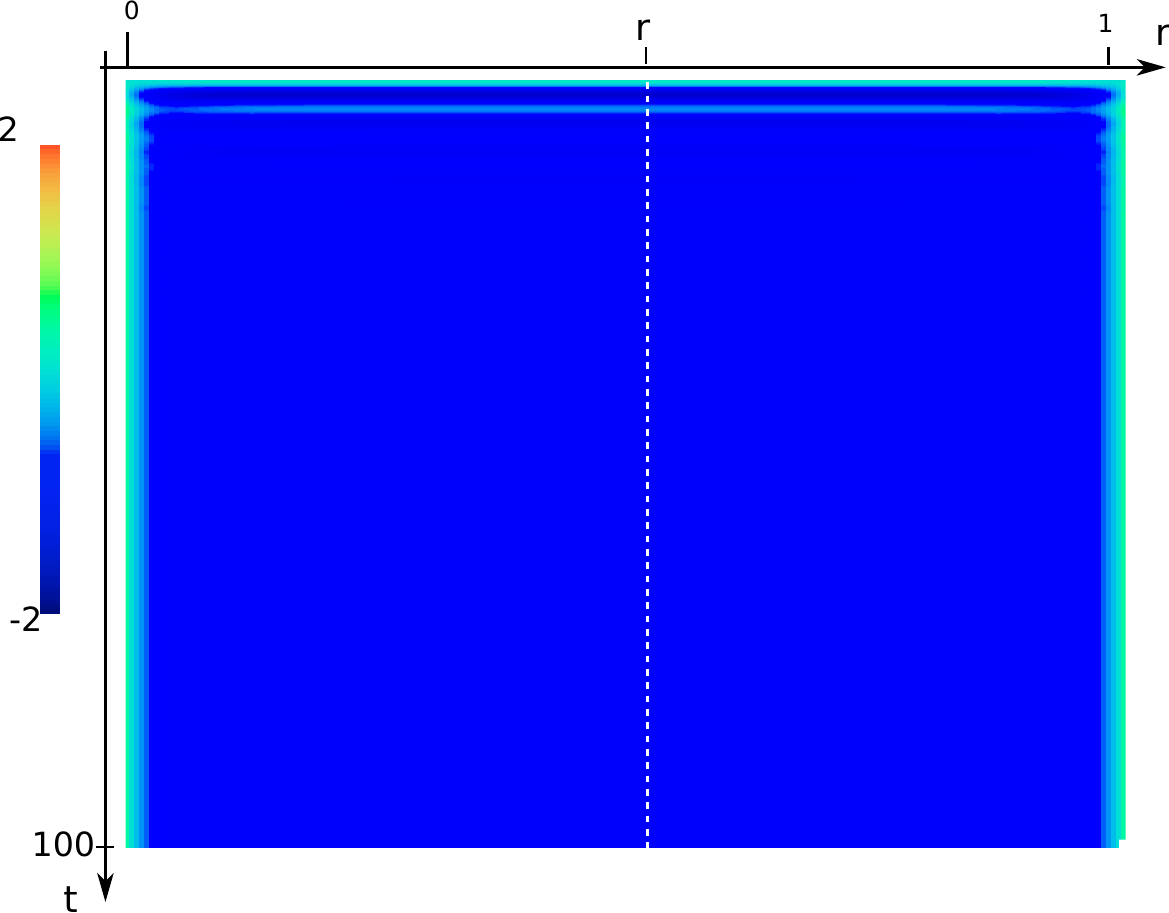}}
	\end{center}
	\caption{Reflective or zero boundary conditions: activity as the noise amplitude is increased. For reflective boundaries, initial conditions are $5$ on $[0,0.025]$ (orange box) and $0$ otherwise. For zero boundary, initial conditions are homogeneous equal to zero. Animations of the activity are available in the supplementary material.}
	\label{fig:turingReflected}
\end{figure}

In the case of zero boundary conditions, we investigate the dynamics using homogeneous initial conditions. Inhomogeneities arise for neurons close to the border of the neural field that receive less input that neurons far from the border. However, qualitative regimes observed in the periodic case are recovered in this case as noise is increased: stationary solutions, chaotic wave-splitting, synchronized oscillations (of amplitude depending on the position on the neural field), and then back to a stationary solution. No transient wave-splitting was found, and the spatio-temporal oscillations were replaced by partially synchronized solutions (Fig.~\ref{fig:turingReflected}(j))

\section*{Acknowledgements}
The author acknowledges the help of anonymous reviewers for the readability and the structure of the paper. He wants to warmly thank Romain Veltz for interesting technical discussions on the content and on relevant references, and David Colliaux for discussions on the choice of connectivity kernels, and partial funding from ERC grant \#227747.


\begin{thebibliography}{10}
\expandafter\ifx\csname url\endcsname\relax
  \def\url#1{\texttt{#1}}\fi
\expandafter\ifx\csname urlprefix\endcsname\relax\def\urlprefix{URL }\fi
\expandafter\ifx\csname href\endcsname\relax
  \def\href#1#2{#2} \def\path#1{#1}\fi

\bibitem{roxin-brunel-etal:05}
A.~Roxin, N.~Brunel, D.~Hansel, {Role of Delays in Shaping Spatiotemporal
  Dynamics of Neuronal Activity in Large Networks}, Physical Review Letters
  94~(23) (2005) 238103.

\bibitem{coombes-laing:11}
S.~Coombes, C.~Laing, Delays in activity based neural networks, Submitted to
  the Royal Society.

\bibitem{roxin-montbrio:11}
A.~Roxin, E.~Montbrio, How effective delays shape oscillatory dynamics in
  neuronal networks, Physica D: Nonlinear Phenomena 240~(3) (2011) 323--345.

\bibitem{series-fregnac:02}
P.~Series, S.~Georges, J.~Lorenceau, Y.~Fr{\'e}gnac, Orientation dependent
  modulation of apparent speed: a model based on the dynamics of feed-forward
  and horizontal connectivity in v1 cortex, Vision research 42~(25) (2002)
  2781--2797.

\bibitem{hubel-wiesel-etal:78}
D.~H. Hubel, T.~N. Wiesel, M.~P. Stryker, Anatomical demonstration of
  orientation columns in macaque monkey, J. Comp. Neur. 177 (1978) 361--380.

\bibitem{bosking-zhang-etal:97}
W.~Bosking, Y.~Zhang, B.~Schofield, D.~Fitzpatrick, Orientation selectivity and
  the arrangement of horizontal connections in tree shrew striate cortex, The
  Journal of Neuroscience 17~(6) (1997) 2112--2127.

\bibitem{woosley:69}
T.~Woosley, H.~Van Der~Loos, The structural organization of layer iv in the
  somatosensory region (si) of mouse cerebral cortex: The description of a
  cortical field composed of discrete cytoarchitectonic units, Brain Research
  (1969) 205--238.

\bibitem{kandel-schwartz-etal:00}
E.~Kandel, J.~Schwartz, T.~Jessel, Principles of Neural Science, 4th Edition,
  McGraw-Hill, 2000.

\bibitem{thorpe-delorme-etal:01}
S.~Thorpe, A.~Delorme, R.~VanRullen, Spike based strategies for rapid
  processing., Neural Networks 14 (2001) 715--726.

\bibitem{ecker-berens-etal:10}
A.~Ecker, P.~Berens, G.~Keliris, M.~Bethge, N.~Logothetis, A.~Tolias,
  Decorrelated neuronal firing in cortical microcircuits, science 327~(5965)
  (2010) 584.
\newblock \href {http://dx.doi.org/DOI: 10.1126/science.1179867}
  {\path{doi:DOI: 10.1126/science.1179867}}.

\bibitem{renart-de-la-rocha-etal:10}
A.~Renart, J.~De~la Rocha, P.~Bartho, L.~Hollender, N.~Parga, A.~Reyes,
  K.~Harris, The asynchronous state in cortical circuits, science 327~(5965)
  (2010) 587.

\bibitem{buszaki:06}
G.~Buzsaki, Rhythms of the brain, Oxford University Press, USA, 2004.

\bibitem{tabareau-slotine:10}
N.~Tabareau, J.~Slotine, Q.~Pham, How synchronization protects from noise, PLoS
  computational biology 6~(1) (2010) e1000637 

\bibitem{izhikevich-polychronization:06}
E.~Izhikevich, Polychronization: Computation with spikes, Neural Computation
  18~(2) (2006) 245--282 

\bibitem{ermentrout-cowan:80}
G.~Ermentrout, J.~Cowan, Large scale spatially organized activity in neural
  nets, SIAM Journal on Applied Mathematics (1980) 1--21.

\bibitem{coombes-owen:05}
S.~Coombes, M.~R. Owen, Bumps, breathers, and waves in a neural network with
  spike frequency adaptation, Phys. Rev. Lett. 94~(14).

\bibitem{spreng-grady:10}
R.~Spreng, C.~Grady, Patterns of brain activity supporting autobiographical
  memory, prospection, and theory of mind, and their relationship to the
  default mode network, Journal of Cognitive Neuroscience 22~(6) (2010)
  1112--1123.

\bibitem{amari:72}
S.~Amari, Characteristics of random nets of analog neuron-like elements, Syst.
  Man Cybernet. SMC-2.

\bibitem{amari:77}
S.-I. Amari, Dynamics of pattern formation in lateral-inhibition type neural
  fields, Biological Cybernetics 27~(2) (1977) 77--87.

\bibitem{wilson-cowan:72}
H.~Wilson, J.~Cowan, Excitatory and inhibitory interactions in localized
  populations of model neurons, Biophys. J. 12 (1972) 1--24.

\bibitem{wilson-cowan:73}
H.~Wilson, J.~Cowan, A mathematical theory of the functional dynamics of
  cortical and thalamic nervous tissue, Biological Cybernetics 13~(2) (1973)
  55--80.

\bibitem{ermentrout:98}
B.~Ermentrout, Neural networks as spatio-temporal pattern-forming systems,
  Reports on Progress in Physics 61 (1998) 353--430.

\bibitem{ermentrout-cowan:79}
G.~Ermentrout, J.~Cowan, {Temporal oscillations in neuronal nets}, Journal of
  mathematical biology 7~(3) (1979) 265--280.

\bibitem{laing-troy-etal:02}
C.~Laing, W.~Troy, B.~Gutkin, G.~Ermentrout, Multiple bumps in a neuronal model
  of working memory, SIAM J. Appl. Math. 63~(1) (2002) 62--97.

\bibitem{rolls-deco:10}
E.~Rolls, G.~Deco, The noisy brain: stochastic dynamics as a principle of brain
  function, Oxford university press, 2010.

\bibitem{abbott-van-vreeswijk:93}
L.~Abbott, C.~Van~Vreeswijk, Asynchronous states in networks of pulse-coupled
  neuron, Phys. Rev 48 (1993) 1483--1490.

\bibitem{amit-brunel:97}
D.~Amit, N.~Brunel, Model of global spontaneous activity and local structured
  delay activity during delay periods in the cerebral cortex, Cerebral Cortex 7
  (1997) 237--252.

\bibitem{brunel-hakim:99}
N.~Brunel, V.~Hakim, Fast global oscillations in networks of integrate-and-fire
  neurons with low firing rates, Neural Computation 11 (1999) 1621--1671.

\bibitem{buice-cowan:07}
M.~Buice, J.~Cowan, Field-theoretic approach to fluctuation effects in neural
  networks, Physical Review E 75~(5).

\bibitem{elboustani-destexhe:09}
S.~El~Boustani, A.~Destexhe, A master equation formalism for macroscopic
  modeling of asynchronous irregular activity states, Neural computation 21~(1)
  (2009) 46--100.

\bibitem{bressloff:09}
P.~Bressloff, Stochastic neural field theory and the system-size expansion,
  SIAM J. on Applied Mathematics 70 (2009) 1488--1521.

\bibitem{touboul-ermentrout:10}
J.~Touboul, G.~B. Ermentrout, \href{http://arxiv.org/abs/1008.2839}{Finite-size
  and correlation-induced effects in mean-field dynamics}, Arxiv preprint
  arXiv:1008.2839.
\newline\urlprefix\url{http://arxiv.org/abs/1008.2839}

\bibitem{touboulNeuralfields:11}
J.~Touboul, Mean-field equations for stochastic neural fields with
  spatio-temporal delays, (submitted).

\bibitem{changeux:06}
Y.~Fregnac, M.~Blatow, J.~Changeux, J.~De~Felipe, A.~Lansner, W.~Maass,
  D.~Mc~Cormick, C.~Michel, H.~Monyer, E.~Szathm{\'a}ry, R.~Yuste, Ups and
  downs in cortical computation, Microcircuits: the interface between neurons
  and global brain function, The MIT Press 393--433.

\bibitem{BFFT}
J.~Baladron, D.~Fasoli, O.~Faugeras, J.~Touboul, Mean field description of and
  propagation of chaos in recurrent multipopulation networks of hodgkin-huxley
  and fitzhugh-nagumo neurons, arXiv:1110.4294 (2011).

\bibitem{talay-vaillant:03}
D.~Talay, O.~Vaillant, A stochastic particle method with random weights for the
  computation of statistical solutions of mckean-vlasov equations, The Annals
  of Applied Probability 13~(1) (2003) 140--180 

\bibitem{venkov-coombes-etal:07}
N.~Venkov, S.~Coombes, P.~Matthews, Dynamic instabilities in scalar neural
  field equations with space-dependent delays, Physica D: Nonlinear Phenomena
  232 (2007) 1--15.

\bibitem{mischler-mouhot:11}
S.~Mischler, C.~Mouhot, B.~Wennberg, A new approach to quantitative chaos
  propagation estimates for drift, diffusion and jump processes, Arxiv Preprint
  arXiv:1101.4727.

\bibitem{touboul-hermann-faugeras:11}
J.~Touboul, G.~Hermann, O.~Faugeras, Noise-induced behaviors in neural mean
  field dynamics, SIAM J. on Dynamical Systems 11~(49--81).

\bibitem{bressloff:96}
P.~Bressloff, New mechanism for neural pattern formation, Physical Review
  Letters 76~(24) (1996) 4644--4647 

\bibitem{hutt-bestehorn:03}
A.~Hutt, M.~Bestehorn, T.~Wennekers, Pattern formation in intracortical
  neuronal fields, Network: Computation in Neural Systems 14~(2) (2003)
  351--368 

\bibitem{veltz:11}
R.~Veltz, An analytical method for computing hopf bifurcation curves in neural
  field networks with space-dependent delays, Comptes Rendus Mathematique 
  1631-073X.

\bibitem{corless:96}
R.~Corless, G.~Gonnet, D.~Hare, D.~Jeffrey, D.~Knuth, On the lambertw function,
  Advances in Computational mathematics 5~(1) (1996) 329--359 

\bibitem{hale-lunel:93}
J.~Hale, S.~Lunel, Introduction to functional differential equations, Springer
  Verlag, 1993.

\bibitem{shayer-campbell:00}
L.~Shayer, S.~Campbell, {Stability, bifurcation, and multistability in a system
  of two coupled neurons with multiple time delays}, SIAM Journal on Applied
  Mathematics 61~(2) (2000) 673--700.

\bibitem{DDEBif1}
K.~Engelborghs, T.~Luzyanina, D.~Roose, Numerical bifurcation analysis of delay
  differential equations using dde-biftool, ACM Transactions on Mathematical
  Software (TOMS) 28~(1) (2002) 1--21 

\bibitem{DDEBif2}
K.~Engelborghs, T.~Luzyanina, G.~Samaey, Dde-biftool v. 2.00: a matlab package
  for bifurcation analysis of delay differential equations, Technical Report
  TW-330, Department of Computer Science, K.U.Leuven, Leuven, Belgium (2001).

\bibitem{ermentrout:02}
B.~Ermentrout, {Simulating, Analyzing, and Animating Dynamical Systems: A Guide
  to XPPAUT for Researchers and Students}, Society for Industrial Mathematics,
  2002.

\bibitem{coombes-owen:07}
S.~Coombes, M.~Owen, Exotic dynamics in a firing rate model of neural tissue
  with threshold accommodation, Contemporary Mathematics 440 (2007) 123 
  0271--4132.

\bibitem{pham-pakdaman:98}
J.~Pham, K.~Pakdaman, J.~Vibert, Noise-induced coherent oscillations in
  randomly connected neural networks, Physical Review E 58~(3) (1998) 3610.

\bibitem{nesse-bressloff:08}
W.~Nesse, A.~Borisyuk, P.~Bressloff, Fluctuation-driven rhythmogenesis in an
  excitatory neuronal network with slow adaptation, Journal of computational
  neuroscience 25~(2) (2008) 317--333 

\bibitem{lindner:04}
B.~Lindner, J.~Garcia-Ojalvo, A.~Neiman, L.~Schimansky-Geier, Effects of noise
  in excitable systems, Physics Reports 392~(6) (2004) 321--424 

\bibitem{meinhardt-klinger:87}
H.~Meinhardt, M.~Klinger, A model for pattern generation on the shells of
  molluscs, Journal of Theoretical Biology 126 (1987) 63--89.

\bibitem{aradi-soltesz:02}
I.~Aradi, I.~Soltesz, Modulation of network behaviour by changes in variance in
  interneuronal properties, The Journal of physiology 538~(1) (2002) 227.

\bibitem{destexhe-contreras:06}
A.~Destexhe, D.~Contreras, Neuronal computations with stochastic network
  states, Science 314~(5796) (2006) 85 

\bibitem{diba-koch:04}
K.~Diba, H.~Lester, C.~Koch, Intrinsic noise in cultured hippocampal neurons:
  experiment and modeling, The Journal of neuroscience 24~(43) (2004) 9723 
  0270--6474.

\end{thebibliography}
\end{document}